\documentclass[11pt,reqno,a4paper]{article}
\usepackage{amsmath}
\usepackage{amsthm}
\usepackage{amssymb}
\usepackage{graphicx}
\usepackage{verbatim}

\newtheorem{theorem}{Theorem}[section]
\newtheorem{lemma}[theorem]{\sc{Lemma}}
\newtheorem{proposition}[theorem]{Proposition}
\newtheorem{corollary}[theorem]{Corollary}

\theoremstyle{definition}
\newtheorem{definition}{Definition}[section]

\theoremstyle{remark}
\newtheorem{remark}{Remark}[section]

\renewcommand{\phi}{\varphi}

\renewcommand{\leq}{\leqslant}
\renewcommand{\geq}{\geqslant}

\newcommand{\sca}[2]{\langle #1 | #2\rangle}

\newcommand{\opnorm}[1]{{\left\| #1 \right\|}_{\mathcal{L}}}

\renewcommand{\d}{\mathrm{d}}

\newcommand{\Medial}{\mathscr{M}}

\DeclareSymbolFont{bbold}{U}{bbold}{m}{n}
\DeclareSymbolFontAlphabet{\mathbbold}{bbold}

\newcommand{\R}{\mathbb{R}}

\newcommand{\E}{\mathbb{E}}
\newcommand{\T}{\mathbb{T}}

\newcommand{\C}{\mathbb{C}}

\usepackage{fix-cm}
\usepackage[utf8]{inputenc} 
\usepackage[english]{babel} 
\usepackage[T1]{fontenc}
\usepackage{graphicx}
\usepackage{amsmath}
\usepackage{amssymb}
\usepackage{mathrsfs}
\usepackage{mathtools}
\usepackage{booktabs} 
\usepackage{caption} 
\usepackage{subcaption} 
\usepackage{pgfplots}
\usepackage[all]{nowidow}
\usepackage{tikz}
\usetikzlibrary{er,positioning,bayesnet}
\usepackage{multicol}
\usepackage{algpseudocode,algorithm,algorithmicx}
\usepackage{xfrac}
\usepackage{fancyhdr}
\usepackage[left=3cm, right=3cm, top=2cm, bottom=3cm]{geometry} 
\usepackage{float}
\usepackage{hyperref}
\hypersetup{
    colorlinks=true,
    linkcolor=blue,
    urlcolor=blue,
    citecolor=blue,}
\usepackage{enumitem} 
\usepackage{bm} 
\usepackage{fancybox} 
\usepackage{color}
\usepackage{xcolor} 

\renewcommand{\C}{\mathcal{C}}
\renewcommand{\H}{\mathbb{H}}

\newcommand{\reach}{\mathrm{reach}} 
\renewcommand{\d}{\mathrm{d}}

\title{Submanifolds of class $C^{1,\alpha}$ and sets with positive $\mu$-reach}

\author{
Vincent Borrelli\textsuperscript{1}, Jean-Baptiste Follet\textsuperscript{1,2} and Boris Thibert\textsuperscript{2}
}

\begin{document}
\maketitle
\begin{center}
\emph{\textsuperscript{1} Université Claude Bernard Lyon 1, CNRS, Institut Camille Jordan,\\ F-69622 Villeurbanne, France}.\\
\emph{\textsuperscript{2} Université Grenoble Alpes, CNRS, Grenoble INP, Laboratoire Jean Kuntzmann,\\ 38000 Grenoble, France}.\\
\end{center}
\begin{abstract}

It is well-known since the seminal work of Herbert Federer [Trans. of the AMS, 1959] that submanifolds of class $C^{1,1}$ have positive reach. In this paper, we extend this property to less regular submanifolds by using the notion of $\mu$-reach that was introduced in the 2000's. We first show that every compact $C^1$ submanifold of the Euclidean space $\E^n$ has positive 
$\mu$-reach for all $\mu<1$. We then show that intermediate regularities $C^{1,\alpha}$ induce more quantitative results on the norm $\|\nabla \d_M\|$ of the generalized gradient of the distance function~$\d_M$ to the submanifold. More precisely, if $M\subset \E^n$ is a submanifold of class $C^{1,\alpha}$, with $\alpha<1$, then there exists a constant $C>0$ such that 
 $$\forall p\in\E^n\setminus M,\quad 1 - \| \nabla \d_M(p) \|^2  \leq C ~ \d_M(p)^{\frac{2 \alpha}{1- \alpha}}.$$
We finally show that the exponent $2\alpha/(1-\alpha)$ in this estimate is sharp.
\end{abstract}

\tableofcontents

\section{Introduction}
Riemannian manifolds of dimension $n$ cannot, in general, be 
$C^2$-isometrically embedded into the Euclidean space $\E^{n+1}$ of dimension $n+1$. For instance, the Hilbert–Efimov theorem asserts that the hyperbolic plane $\H^2$ does not admit any $C^2$ isometric embedding into $\E^3$~\cite{efimov}. Similarly, curvature-based arguments show that the two-dimensional flat torus 
$\T^2$ does not admit any 
$C^2$ isometric embedding into 
$\E^3$. In contrast, the 
$C^1$ isometric embedding theorem of Nash and Kuiper establishes the existence of infinitely many isometric embeddings of a Riemannian manifold when the regularity is relaxed to the $C^1$ setting~\cite{n-c1ii-54,k-oc1ii-55,kuiperII}.
Despite these foundational results, the geometric analysis of such 
$C^1$ submanifolds remains incomplete, and a deeper understanding of their extrinsic geometry  
is still needed. In this article, we investigate the geometry of $C^1$ and $C^{1,\alpha}$ submanifolds of 
$\E^n$ using the notion of sets with positive 
$\mu$-reach, introduced by Chazal, Cohen-Steiner, and Lieutier in 2006~\cite{chazal2006sampling,Chazal-Lieutier}, which generalizes Federer's classical concept of sets with positive reach.

\paragraph{Sets with positive reach and $C^{1,1}$ regularity.} Sets with positive reach were introduced by Herbert Federer in 1959 in order to generalize the notion of curvature measures to non smooth and non convex objects~\cite{Federer59}. On the one hand, this concept makes it possible to define notions of curvature on non regular objects. On the other hand, it provides a tool - the projection map - for comparing sets that are Hausdorff close, thereby enabling stability results. As a consequence, it plays a central role in several areas of geometry, including Geometric Measure Theory or Geometric Inference~\cite{fu1993convergence,cohen2006second,chazal2009stability}. For further details, we refer to the survey of Th\"{a}le~\cite{thale200850}. 

By definition, a closed set $K$ of $\E^n$ has a reach  strictly greater than $r>0$ if every point~$p$ that is at a distance less than $r$ from $K$, has a unique closest point on $K$. This notion is intimately related to the distance function $\d_K:\E^n\to\R$ to the set $K$ and allows to define the $r$-offset of $K$:
\[K^r=\{p\,|\, d_K(p)\leq r\}\]
also called the tubular neigborhood of $K$ of size $r.$
Federer studied in detail the  function~$\d_K$ and showed that for $r>0$ small enough, the level set $\d_K^{-1}(r)=\partial K^r$ is a codimension one submanifold of class $C^{1,1}$, \textit{i.e.} of class $C^1$ with a Lipschitz normal.

Federer also considered the case of sets $K$ that are submanifolds of $\E^n$. He mentioned without giving any detail that the graph of a Lipschitz function has positive reach if and only if it is of class $C^{1,1}$~\cite[Remark 4.20]{Federer59}. This statement happens to be not straightforward and has generated several works in the last decades. Lytchak showed in 2005 that a topologically embedded manifold without boundary with positive reach  is of class $C^{1,1}$~\cite{lytchak2005almost}. Scholtes gave an alternative proof in 2013  in the  case of codimension one submanifolds homeomorphic to the sphere~\cite{scholtes2013hypersurfaces}. Rataj and Zajíček also provided a proof in 2017, under the restrictive assumption that the manifold is Lipschitz~\cite{rataj2017structure}.
 More recently in 2024, using an approach based on homology, Lieutier and Wintraecken showed that a closed submanifold has positive reach if and only if it is locally the graph of a $C^{1,1}$ function~\cite{lieutier2024manifolds}. 
 
\paragraph{Sets with positive $\mu$-reach.} Sets with positive reach present two main limitations. First, although they were introduced to extend curvature measures to irregular objects, they do not include simple irregular structures such as triangulations. Second, they are not stable with respect to the Hausdorff distance. Consequently, they are not directly suited to the framework of \textit{Geometric Inference}, where one aims to infer geometric properties of a shape $K$ from a Hausdorff approximation. To overcome these limitations, Chazal, Cohen-Steiner, and Lieutier introduced in 2006 the notion of sets with positive 
$\mu$-reach~\cite{chazal2006sampling,Chazal-Lieutier}.

The notion of $\mu$-reach can be viewed as a filtered notion of reach, derived from the gradient of the distance function $\d_K$.  This distance function is 1-Lipschitz and is therefore differentiable almost everywhere. However it is not differentiable on the whole space $\E^n$. It can be shown that the reach of a compact set $K\subset\E^n$ can be defined as the largest radius $r$ for which the distance function $\d_K$ is differentiable on the $r$-offset $K^r$~:
\[\reach(K) := \sup \{r ~ \vert ~  \d_K\mbox{ is differentiable on }K^r\setminus K \},\]
see Section~\ref{sec:distancefunction}.
In 2004, Lieutier introduced an extension of the gradient map $\nabla \d_K$ to the entire Euclidean space, defining a \textit{generalized gradient}~\cite{LIEUTIER20041029}. 
This generalized gradient satisfies $\|\nabla\d_K(p)\|\leq 1$ with equality if and only if $\d_K$ is differentiable at $p$. This observation naturally leads to a refined version of the reach, namely the $\mu$-reach of $K$, for  $0\leq \mu\leq 1$, which is defined by:
\[
\reach_\mu(K) := \sup \{r ~ \vert ~ \forall p\in K^r, ~ \| \nabla \d_K(p)\| \geq \mu \text{ on } K^r\setminus K \}.
\]
The class of sets with positive $\mu$-reach contains all the sets with positive reach and also more irregular objects such as triangulations. Furthermore, it offers a natural tool in the field of Geometric Inference: stability results of the normals and of the curvature measures have been obtained for sets with positive $\mu$-reach~\cite{chazal2009normal,chazal2009stability}. 

\paragraph{Contributions.} As mentioned above, Federer made a connection between submanifolds of class $C^{1,1}$ and submanifolds with positive reach. In this article, we extend one implication of Federer's claim and investigate the geometry of $C^1$ and $C^{1,\alpha}$ submanifolds of $\E^n$ using the notion of sets with positive $\mu$-reach. We first show the following result. 
\begin{theorem} \label{theo: C1_implique_continuite_fonction_crit}
Let $M$ be a $C^1$ compact submanifold of $\E^n$ without boundary. Then $M$ has positive $\mu$-reach for any $\mu < 1$. \end{theorem}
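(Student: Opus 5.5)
The plan is to use the standard description of Lieutier's generalized gradient: for $p\notin K$, let $\Gamma_K(p)$ be the set of closest points of $p$ on $K$ and $\Theta_K(p)$ the center of the smallest ball enclosing $\Gamma_K(p)$. Then
\[
\nabla \d_K(p)=\frac{p-\Theta_K(p)}{\d_K(p)},
\qquad
\|\nabla \d_K(p)\|^2 = 1-\frac{r_K(p)^2}{\d_K(p)^2},
\]
where $r_K(p)$ is the radius of that smallest enclosing ball. Hence $\|\nabla \d_M(p)\|\geq\mu$ is equivalent to $r_M(p)\leq \sqrt{1-\mu^2}\,\d_M(p)$. The theorem therefore reduces to the following claim: for every $\varepsilon>0$ there is $r>0$ such that, whenever $0<\d_M(p)<r$, all closest points of $p$ lie in a ball of radius $\varepsilon\,\d_M(p)$. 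It suffices to show that the diameter of $\Gamma_M(p)$ is $o(\d_M(p))$ uniformly as $\d_M(p)\to0$.

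\textbf{Key geometric input from $C^1$ regularity.} By compactness and uniform continuity of the Gauss map $x\mapsto T_xM$, there is a modulus $\omega$ with $\omega(t)\to 0$ as $t\to0$ such that for all $x,y\in M$,
\[
\d(y-x,\,T_xM)\leq \omega(\|y-x\|)\,\|y-x\|.
\]
This is the uniform "local graph over the tangent space with small slope" property, obtained by the mean value inequality applied to local graph parametrizations with a finite cover. Moreover, every closest point $x\in\Gamma_M(p)$ satisfies $p-x\perp T_xM$.

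\textbf{Main estimate.} Take $x,y\in\Gamma_M(p)$ and set $d=\d_M(p)$, $\delta=\|x-y\|\leq 2d$. Since $\|p-x\|=\|p-y\|=d$, expanding gives $2\langle p-x,\,y-x\rangle=\delta^2$. Decompose $y-x=t+n$ with $t\in T_xM$ and $\|n\|\leq\omega(\delta)\delta$. Because $p-x$ is normal to $T_xM$, we get $\langle p-x,\,y-x\rangle=\langle p-x,\,n\rangle\leq d\,\omega(\delta)\,\delta$. Therefore $\delta^2\leq 2d\,\omega(\delta)\delta$, that is,
\[
\delta\leq 2\,\omega(\delta)\,d\leq 2\,\omega(2d)\,d,
\]
where we may take $\omega$ nondecreasing. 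The diameter of $\Gamma_M(p)$ is thus at most $2\omega(2d)d$, and so $r_M(p)\leq 2\omega(2d)d$. Choose $r$ such that $2\omega(2r)\leq\sqrt{1-\mu^2}$. Then $\|\nabla\d_M(p)\|\geq\mu$ on $M^r\setminus M$, so $\reach_\mu(M)\geq r>0$.

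\textbf{Main obstacle.} The delicate step is establishing the uniform modulus $\omega$ globally, including for pairs $x,y$ that are far apart. For $\|y-x\|$ larger than a fixed $\rho$, the bound $\d(y-x,T_xM)\leq\|y-x\|$ only gives $\omega\leq1$, which is useless. I would handle this by first showing that for $d$ small, closest points are close to one another: $\delta\leq 2d$ is automatically small. So only the local regime $\|y-x\|\leq 2r$ matters, where the graph description applies with a Lebesgue number argument on a finite atlas. Justifying the generalized-gradient formula itself is taken from Lieutier's construction as recalled in Section~\ref{sec:distancefunction}.
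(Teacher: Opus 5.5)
Your proof is correct, and it takes a genuinely different route from the paper's. The paper argues by contradiction. It takes medial-axis points $p_k\to p\in M$ with $\|\nabla\d_M(p_k)\|\leq\mu$ and extracts limits $n_1,n_2$ of the unit normals at two diameter-realizing projections, together with a limit $t\in T_pM$ of the chord directions. A compactness argument on planes shows $n_1,n_2,t$ are coplanar, and Jung's theorem (Lemma~\ref{lem: consequnce_Jung}) gives $\langle n_1,n_2\rangle\leq\mu^2$. After ruling out $n_1=-n_2$, the three vectors are independent, which is a contradiction. You instead work directly with the identity $2\langle p-x,y-x\rangle=\|y-x\|^2$ for two closest points, the normality of $p-x$, and a uniform secant-to-tangent modulus $\omega$. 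This gives the explicit bound $1-\|\nabla\d_M(p)\|^2\leq 4\,\omega(2\d_M(p))^2$. The crude estimate radius $\leq$ diameter replaces Jung, and no Grassmannian compactness is needed. The paper relies on the same uniform $C^1$ fact, only qualitatively, when it asserts that the limiting chord direction $t$ is tangent at $p$. Your ``main obstacle'' also dissolves once you define $\omega(s)$ as a supremum over pairs with $\|y-x\|\leq s$. Then $\omega$ is automatically nondecreasing and bounded by $1$, and only $\omega(s)\to 0$ needs the local graph argument. The real payoff of your route is that it is quantitative and yields Theorem~\ref{theo: majoration_fonction_critique} at once. For a $C^{1,\alpha}$ manifold, $\omega(s)\leq K s^{\alpha}$: locally this follows from the uniform Hölder bounds of Proposition~\ref{prop: graph_constant_holder_global}, and it is trivial for $s$ bounded below. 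Your inequality $\delta^2\leq 2\d_M(p)\,\omega(\delta)\,\delta$ then gives $\delta^{1-\alpha}\leq 2K\d_M(p)$, hence $1-\|\nabla\d_M(p)\|^2\leq (2K)^{2/(1-\alpha)}\d_M(p)^{2\alpha/(1-\alpha)}$. For $\alpha=1$ it gives $\delta^2\leq 2K\d_M(p)\,\delta^2$, forcing $\delta=0$ when $\d_M(p)<1/(2K)$, i.e.\ positive reach. The paper reaches these estimates through the separate argument of Proposition~\ref{prop: lien_C_1alpha_grad_g}, with a trigonometric construction and Jung, which is essentially the same bootstrap.
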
 
This result is optimal for the parameter $\mu$. Indeed, any manifold $M$ with  positive $\mu$-reach for $\mu=1$, has positive reach, hence is of class $C^{1,1}$. Theorem~\ref{theo: C1_implique_continuite_fonction_crit} is obviously true if the compact manifold is of dimension $0$, since it has positive reach in that case. Henceforth, the manifold is assumed to have dimension greater than $1$. Using results of~\cite{fu1985tubular,chazal2007shape}, Theorem~\ref{theo: C1_implique_continuite_fonction_crit} directly implies a regularity result on the boundaries of the offset $M^r:=\d_M^{-1}([0,r])$ and of the double offset $M^{r,t}:=\d_L^{-1}([0,t])$, where $L=\overline{\E^n\setminus M^r}$, as stated in the following corollary.
\begin{corollary}\label{coro:offsetshavepositivereach}
Let $M$ be a $C^1$ compact submanifold of $\E^n$ without boundary.  Then for any $\mu<1$ and any $0<r<\reach_\mu(M)$, we have: 
\begin{itemize}
\item The set $L$ has a reach greater than $\mu r$ and its boundary $\partial L=\partial M^r$ is a Lipschitz manifold.
\item For any $t\in]0,\mu r[$ the boundary of the double offset $\partial M^{r,t}$ is of class $C^{1,1}$ and has a reach that satisfies  $\reach(\partial M^{r,t}) \geq \min(t,\mu r - t)$.
\end{itemize}
\end{corollary}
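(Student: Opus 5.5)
This corollary should follow by combining Theorem~\ref{theo: C1_implique_continuite_fonction_crit} with known results on offsets of sets with positive $\mu$-reach: Fu's tubular neighbourhood results~\cite{fu1985tubular} and the double-offset results of Chazal, Cohen-Steiner and Lieutier~\cite{chazal2007shape}. No new geometric input about $M$ is needed beyond the positivity of $\reach_\mu(M)$.

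\emph{Setup.} Fix $\mu<1$. By Theorem~\ref{theo: C1_implique_continuite_fonction_crit}, $\reach_\mu(M)>0$, so we may take $0<r<\reach_\mu(M)$. By definition of the $\mu$-reach, $\|\nabla \d_M(p)\|\geq\mu>0$ on $M^{r'}\setminus M$ for some $r'>r$. In particular $r$ is a regular value of $\d_M$ in Clarke's sense, since the generalized gradient is the minimal-norm element of the Clarke gradient.

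\emph{First item.} I will invoke the result from~\cite{chazal2007shape}, whose proof rests on~\cite{fu1985tubular}: if $K$ is compact and $\|\nabla\d_K\|\geq\mu$ on a neighbourhood of $\d_K^{-1}(r)$, then $L=\overline{\E^n\setminus K^r}$ satisfies $\reach(L)\geq\mu r$.

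\begin{itemize}
\item \textbf{Reach of $L$.} The idea is that each point $x\in\partial K^r$ admits an open ball of radius $r$ in $\E^n\setminus L$: take $B(y,r)$ with $y\in K$ a closest point of $x$. The lower bound on the gradient then makes these balls vary in a controlled, ``$\mu$-wide'' way, and this yields the reach bound.
\item \textbf{Lipschitz boundary.} Clarke's implicit function theorem shows that the regular level set $\partial M^r=\d_M^{-1}(r)$ is a Lipschitz hypersurface. We also need $\partial L=\partial M^r$. This holds because the gradient does not vanish at the level $r$, so $\d_M$ has no local maxima there, and hence $\d_M^{-1}(r)$ coincides with the topological boundary of $M^r$.
\end{itemize}

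\emph{Second item.} Apply the standard fact for a closed set $L$ with $\reach(L)>R$, where $R=\mu r$. For $0<t<R$, the boundary $\partial L^{t}$ is a $C^{1,1}$ hypersurface. The argument has three parts.

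\begin{itemize}
\item \textbf{Unique projection.} On $L^{R}\setminus L$ the function $\d_L$ is $C^{1}$, its gradient is $(x-\pi_L(x))/\d_L(x)$, and $\pi_L$ is Lipschitz on $L^t$ (Federer).
\item \textbf{Inner ball.} Each point of $\partial L^t$ has an inner tangent ball of radius $t$, namely the ball centred at its projection.
\item \textbf{Outer ball.} Each point has an outer tangent ball of radius $R-t$: push outward along the normal to distance $R$, which is still within the reach.
\end{itemize}

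A hypersurface with two-sided tangent balls of radii $t$ and $R-t$ has reach at least $\min(t,R-t)$ and is $C^{1,1}$. Since $M^{r,t}=L^t$, this gives $\reach(\partial M^{r,t})\geq\min(t,\mu r-t)$.

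\emph{Main obstacle.} The delicate step is the first item, namely transferring the $\mu$-reach lower bound to the reach of the complement $L$. I intend to quote it from~\cite{chazal2007shape} rather than reprove it. I expect that only routine verification is needed: that its hypotheses (compactness of $M$, and $\mu$-reach strictly greater than $r$) are met, which is exactly what Theorem~\ref{theo: C1_implique_continuite_fonction_crit} supplies.
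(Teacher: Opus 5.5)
Your proposal is correct and follows essentially the same route as the paper: Theorem~\ref{theo: C1_implique_continuite_fonction_crit} gives $\reach_\mu(M)>r$, so the level $r$ is regular for $\d_M$. Fu's Lipschitz-boundary theorem and the bound $\reach(L)\geq \mu r$ of Chazal, Cohen-Steiner and Lieutier then give the first item, and Federer's theory of sets with positive reach gives the second. Your inner/outer tangent-ball argument (radii $t$ and $\mu r-t$) spells out the bound $\reach(\partial M^{r,t})\geq\min(t,\mu r-t)$, which the paper only attributes to Federer; like the paper, you tacitly need $\mu>0$, since for $\mu\leq 0$ one has $\reach_\mu(M)=+\infty$ and $r$ could be a critical value.
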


Since Federer's curvature measures are well defined on sets with positive reach, this corollary shows that they can also be defined on the sets $\d_M^{-1}([r,+\infty[)$. Furthermore, stability results such as those established in~\cite{chazal2009stability}, open the possibility of estimating Federer's curvature measures on the level sets $\d_M^{-1}(r)$ from a Hausdorff approximation of the manifold $M$. This approach may be applied, for instance, to the study of the geometry of the $C^1$ isometric embeddings mentioned above, where each $C^1$
isometrically embedded submanifold $M$ arises as the limit of a sequence of smooth submanifolds $(M_n)_{n\geq 0}$.\\

\noindent 
Theorem~\ref{theo: C1_implique_continuite_fonction_crit} provides a non quantitative result showing that the norm $\|\nabla \d_M(p)\|$ of the generalized gradient tends to $1$ when $p$ approaches the submanifold~$M$. When considering an intermediate regularity $C^{1,\alpha}$ between $C^1$ and $C^{1,1}$, one obtains a more quantitative statement, namely a lower bound of $\|\nabla \d_M\|$ on the level sets of $\d_M$. In particular, the rate at which $\|\nabla \d_M(p)\|$ converges to 1 depends on the $C^{1,\alpha}$ regularity;  this convergence becomes faster as $\alpha$ increases, as described in the following theorem.

\begin{theorem} \label{theo: majoration_fonction_critique}
Let $M$ be a $C^{1,\alpha}$ compact submanifold of $\E^n$ without boundary with $0\leq \alpha \leq 1$.
Then there exists a constant $C > 0$ such that, for all $p \in \E^n \setminus M$,
$$  1 - \| \nabla \d_M(p) \|^2  \leq C ~\d_M(p)^{2 \alpha} \Big( 1 - \| \nabla \d_M(p) \|^2 \Big)^{\alpha}.$$
In particular, 
\begin{itemize} 
\item if $\alpha <1$, then for every $p \in \E^n \setminus M$ 
$$
 1 - \| \nabla \d_M(p) \|^2  \leq C^{\frac{1}{1-\alpha}} ~ \d_M(p)^{\frac{2 \alpha}{1- \alpha}},
$$
\item if $\alpha=1$, then for every $p\in \E^n$ such that $\d_M(p)< 1/\sqrt{C}$, we have 
$$\|\nabla\d_M(p)\|=1.$$
\end{itemize}
\end{theorem}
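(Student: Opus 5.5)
We would work directly from Lieutier's description of the generalized gradient. For $p\notin M$, let $\Gamma_M(p)$ be the set of closest points of $p$ on $M$, and let $c(p)$ and $r(p)$ be the center and radius of the smallest ball enclosing $\Gamma_M(p)$. Then
\[\nabla \d_M(p)=\frac{p-c(p)}{\d_M(p)},\qquad\text{so}\qquad 1-\|\nabla\d_M(p)\|^2=\frac{r(p)^2}{\d_M(p)^2},\]
since every $x\in\Gamma_M(p)$ satisfies $\|p-x\|=\d_M(p)$. Write $d=\d_M(p)$ and $r=r(p)$. The target inequality is then
\[\frac{r^2}{d^2}\leq C\,d^{2\alpha}\,\frac{r^{2\alpha}}{d^{2\alpha}},\qquad\text{that is,}\qquad r^{2-2\alpha}\leq C\,d^2 .\]
So we must prove the geometric statement that the points of $\Gamma_M(p)$ cannot spread farther than $r\lesssim d^{1/(1-\alpha)}$.

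The key steps are as follows. First, we use that $c(p)$ lies in the convex hull of $\Gamma_M(p)$. We pick two points $x,y\in\Gamma_M(p)$ with $\|x-y\|\geq r$; such points exist because the diameter of a set is at least its enclosing radius. If $r$ is large, i.e.\ larger than a fixed $\delta_0$, the inequality is trivial after enlarging $C$, since $M$ is compact and we can restrict to $d$ bounded; for $d$ large everything is bounded anyway. We therefore assume $\|x-y\|$ is small, so that $x$ and $y$ lie in a single $C^{1,\alpha}$ chart with a uniform Hölder constant $K$ on the normals. Second, we use the normality conditions: $p-x\in N_xM$ and $p-y\in N_yM$, with $\|p-x\|=\|p-y\|=d$. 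The $C^{1,\alpha}$ regularity gives
\[\|\pi_{T_xM}-\pi_{T_yM}\|\leq K\|x-y\|^\alpha,\qquad \mathrm{dist}\big(y-x,\,T_xM\big)\leq K\|x-y\|^{1+\alpha}.\]

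Third, we expand the identity $0=\|p-y\|^2-\|p-x\|^2=\|x-y\|^2-2\langle p-x,\,y-x\rangle$. This gives $\|x-y\|^2=2\langle p-x,\,y-x\rangle$. Since $p-x$ is normal at $x$, only the normal component of $y-x$ contributes, so
\[\|x-y\|^2\leq 2d\,K\|x-y\|^{1+\alpha},\qquad\text{hence}\qquad \|x-y\|^{1-\alpha}\leq 2Kd .\]
Squaring gives $r^{2-2\alpha}\leq\|x-y\|^{2-2\alpha}\leq 4K^2d^2$, which is exactly the claimed inequality.

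The consequences then follow. For $\alpha<1$, we divide by $(1-\|\nabla \d_M\|^2)^\alpha$ when it is nonzero; the other case is trivial. For $\alpha=1$, the inequality reads $s\leq Cd^2 s$ with $s=1-\|\nabla\d_M(p)\|^2$, which forces $s=0$ whenever $Cd^2<1$.

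The main obstacle is making the bound $\mathrm{dist}(y-x,T_xM)\leq K\|x-y\|^{1+\alpha}$ uniform. It must hold with one constant $K$ over all of $M$ and for all pairs $x,y$ at distance at most $\delta_0$. We would obtain this by covering $M$ with finitely many graph charts of $C^{1,\alpha}$ functions over tangent planes and using a Lebesgue number argument. Within a graph chart, the Taylor estimate $|f(v)-f(u)-Df(u)(v-u)|\leq [Df]_\alpha|v-u|^{1+\alpha}$ gives the bound. One must also check that chart distances and Euclidean distances are comparable, which holds because the chart map is bi-Lipschitz. The case where $x$ and $y$ are far apart, $\|x-y\|\geq\delta_0$, is handled by the same identity: $\delta_0^2\leq\|x-y\|^2\leq 2d\|x-y\|\leq 2d\,\mathrm{diam}(M)$. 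This forces $d$ to be bounded below, and then $r^{2-2\alpha}\leq\mathrm{diam}(M)^{2-2\alpha}\leq C d^2$ after enlarging $C$.
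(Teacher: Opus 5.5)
Your proposal is correct, and it reaches the theorem by a more direct route than the paper.

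\textbf{How the paper argues.} The paper first proves Proposition~\ref{prop: lien_C_1alpha_grad_g}, namely $1-\|\nabla\d_M(p)\|^2\le C\|q_1-q_2\|^{2\alpha}$. It does this by bounding the semi-angle, $\sin\beta\le K'\|q_1-q_2\|^\alpha$, and then applying Jung's theorem (Lemma~\ref{lem: consequnce_Jung}). It closes the loop with $\|q_1-q_2\|\le 2\mathscr{R}_M(p)$ and $\mathscr{R}_M(p)^2=\d_M(p)^2(1-\|\nabla\d_M(p)\|^2)$. That last step is where the self-referential factor $(1-\|\nabla\d_M\|^2)^\alpha$ comes from.

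\textbf{How your route differs.} You use the opposite, trivial comparison $\mathscr{R}\le\mathrm{diam}$ together with the isosceles identity $\|x-y\|^2=2\langle p-x,y-x\rangle$, which is just $\|x-y\|=2d\sin\beta$. The same normal-component estimate then gives $\mathrm{diam}\,\Gamma_M(p)\le(2K\d_M(p))^{1/(1-\alpha)}$ directly, and Jung's theorem is never needed.

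\textbf{The uniform estimate.} For $\mathrm{dist}(y-x,T_xM)\le K\|x-y\|^{1+\alpha}$, the paper builds graphs over every tangent plane with a uniform Hölder constant (Proposition~\ref{prop: graph_constant_holder_global}, a parametrized inverse function theorem). It then handles non-convex domains through geodesic distance (Lemmas~\ref{lem: distance_geodesique_norme_R_n} and~\ref{lem: carac_c_1_alpha_2}). Your finite atlas of fixed graph charts plus a Lebesgue number is lighter. It only uses that $(v-u,Df(u)(v-u))\in T_xM$, so the Taylor remainder dominates the normal component. Two small points on this part:
\begin{itemize}
\item Take the chart domains convex, for example by covering with graphs over half-balls, so that the segment $[u,v]$ stays in the domain.
\item No bi-Lipschitz argument is needed, since $\|v-u\|\le\|y-x\|$ holds automatically for an orthogonal projection.
\end{itemize}

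\textbf{Far regime and trade-offs.} The far regime is handled equivalently: the paper assumes $d\ge d_0$, while you use $\|x-y\|\ge\delta_0\Rightarrow d\ge\delta_0/2$. The paper's route gives an intermediate pointwise bound in terms of $\|q_1-q_2\|$ with a dimension-dependent constant, plus a reusable uniform-graph lemma. Your route is shorter and gives a cleaner statement about the size of the projection set. Both displayed forms of the theorem follow from it at once, and for $\alpha=1$ it is exactly Federer's positive-reach argument.

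One minor point: treat $\mathscr{R}_M(p)=0$ separately before dividing by $r^{2\alpha}$ in your reformulation.
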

 We show in Section~\ref{sec: Deux_exemples} that, when $\alpha\neq 1$, the exponent $2\alpha/(1-\alpha)$ in Theorem~\ref{theo: majoration_fonction_critique} is sharp. When $\alpha=1$, we also recover the property that a $C^{1,1}$-submanifold $M$ has positive reach, hence $\|\nabla\d_M(\cdot)\|=1$ in a neighborhood of $M$.\\

The paper is organized as follows: In Section~\ref{sec:distancefunction}, we recall classical results on the distance function, as well as the definitions of reach and $\mu$-reach. Section~\ref{sec:proofTheoremC1}, is devoted to the proof Theorem~\ref{theo: C1_implique_continuite_fonction_crit}. In Section~\ref{sec:theoremC1alpha}, we recall the definition of submanifolds of class $C^{1,\alpha}$ and establish several technical results needed for the proof of Theorem~\ref{theo: majoration_fonction_critique} which presented in Subsection~\ref{subsec:profftheme2}. Finally, in Section~\ref{sec: Deux_exemples}, we show the sharpness of the exponent in Theorem~\ref{theo: majoration_fonction_critique}.

\section{Distance function to a closed set}\label{sec:distancefunction}
The distance function $\d_K : \E^n \to \R$ to a closed subset $K\subset \E^n$ is defined for every point $p \in \E^n$ by the following formula
\[ \d_K(p) := \underset{q \in K}{\inf} \| p - q \|. \]
In this section, we review regularity properties of the distance function $\d_K$ and introduce the notions of medial axis, reach and $\mu$-reach. One may refer to the seminal paper of Herbert Federer in 1959 that introduces the notion of reach and studied the distance function in order to generalize the notion of curvature measures to non smooth and non convex objects~\cite{Federer59}. For the notions of generalized gradient and notion of $\mu$-reach, one may refer to~\cite{LIEUTIER20041029,chazal2006sampling}.
\subsection{Projections, medial axis and reach}
Let $p \in \E^n$. We call projection of $p$ on $K$ any point that realises the minimum of $\d_K$ and we denote by $\Gamma_K(p)$ the set of all projections of $p$ on $K$, namely
$$\Gamma_K(p) = \{ q \in K ~|~ \d_K(p) =  \| p - q \| \}. $$
Since $K$ is a closed subset of $\E^n$, we have $\Gamma_K(p) \neq \emptyset$. Furthermore, since $\Gamma_K(p)$ is included in a sphere of center $p$ and 
of radius $\d_K(p)$, it is bounded and thus compact. 
In particular, there exist two points $q_1, q_2 \in \Gamma_K(p)$ that realize the diameter of $\Gamma_K(p)$.

\begin{definition}
The medial axis $\mathscr{M}(K)$ of $K$ is the set of points of $\E^n$ that have at least two projections on $K$, i.e.
    \[ \mathscr{M}(K) := \big\{ p \in \E^n \setminus K ~ | ~ \mathrm{Card} \big( \Gamma_K(p) \big) \geq 2 \big\}.  \]
\end{definition}

\noindent 
Any point on the complement of the medial axis has a single projection, which allows to define the projection map 
$$\pi_K:\E^n \setminus \mathscr{M}(K) \to K$$ 
by $\pi_K(p) = q$, where $\Gamma_K(p)=\{q\}$.
We denote by $K^r = \{ x \in \E^n ~\vert ~ \d_K(x) \leq r \}$ the $r$-offset of $K$ and we define the reach the following way :

\begin{definition} \label{def: reach} The reach of $K$ is the radius of the larget offset on which $\pi_K$ is defined, \textit{i.e.}
$$
\reach(K) = \text{sup} \{ r\geq 0 ~ \vert ~ \pi_K ~\text{is defined on}~ K^r \}.
$$
\end{definition}

\subsection{Generalized gradient of the distance function}
It can be easily shown that the distance function $\d_K$ to a closed set $K\subset \E^n$ is $1$-Lipschitz and is thus by Rademacher theorem differentiable almost everywhere. The following proposition indicates that it is differentiable on the complement of the medial axis.

\begin{figure}[H]
       \centering
       \begin{subfigure}[b]{0.49\textwidth}
        \centering  \includegraphics[width=1\linewidth]{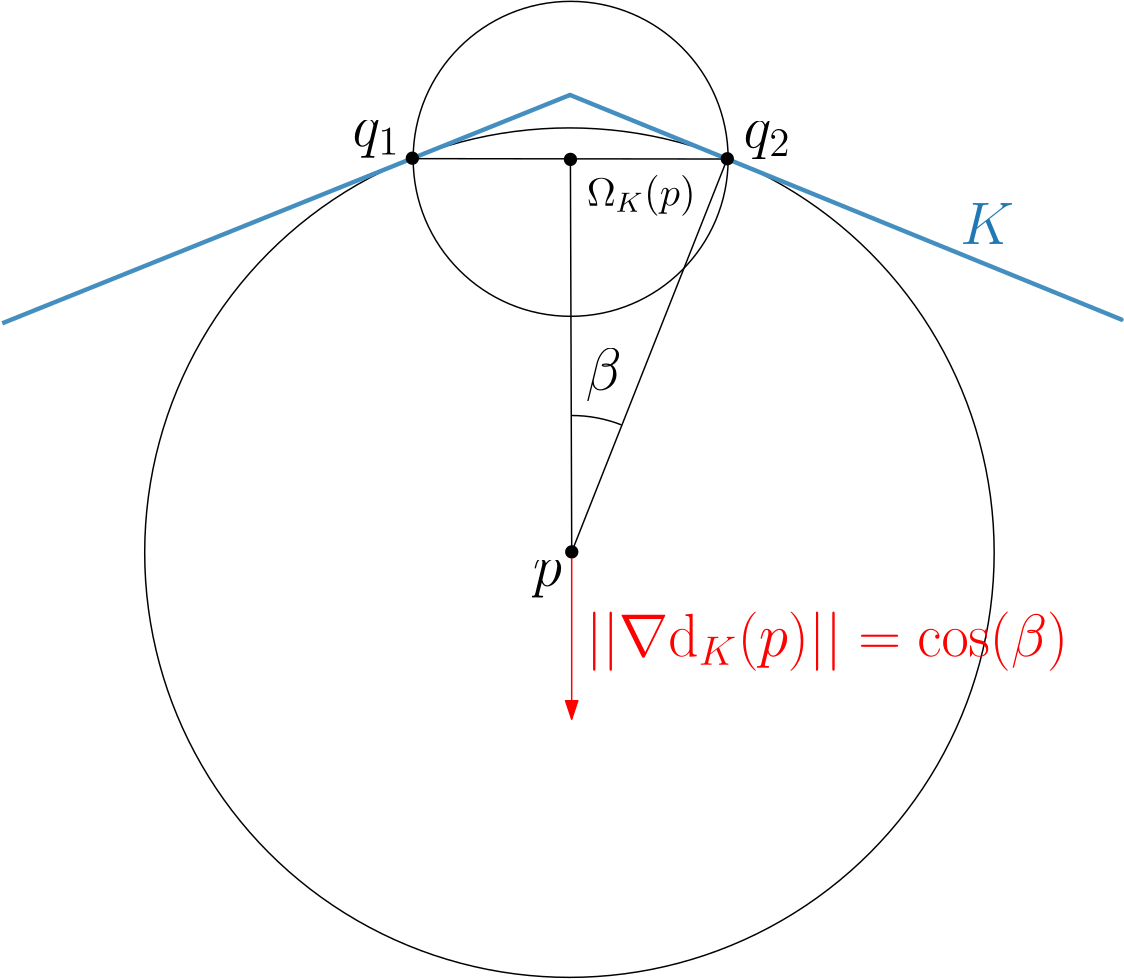}
        \caption{\centering Generalized gradient $\nabla \d_K$ at a point $p$ of the medial axis $\Medial(K)$ with $\Gamma_K(p) =  \{q_1,q_2\}$}
       \end{subfigure}
       \hfill
       \begin{subfigure}[b]{0.49\textwidth}
         \centering \includegraphics[width=1\linewidth]{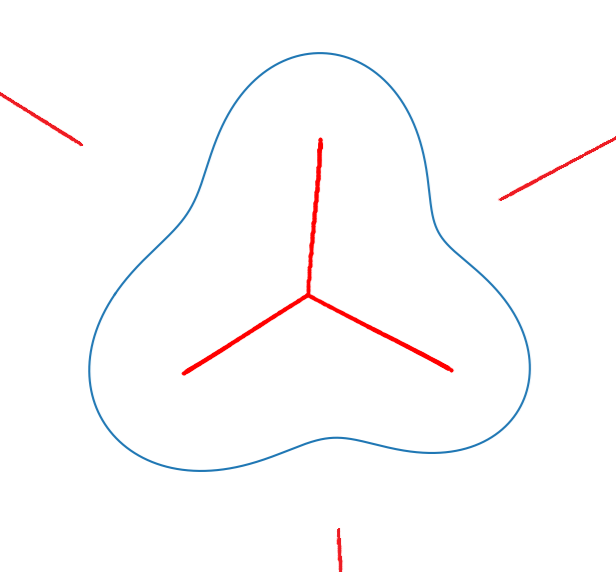}
         \caption{\centering A closed set $K$ (blue) and its medial axis $\Medial(K)$ (red).}
       \end{subfigure}
      \caption[]
        {\centering Generalized gradient of distance function $\d_K$.}
      \label{fig: gradient_generalise}
\end{figure}

\begin{proposition} \label{prop: distance_differentiable}
Let $K$ is a closed subset of $\E^n$. Then for any $p \in \E^n \setminus K$, the following assertions are equivalent  
\begin{enumerate}
    \item $\d_K$ is differentiable at $p$,
    \item The projection $\pi_K$ is defined at $p$.
\end{enumerate}
Furthermore for any $p\in \E^n \setminus \Medial(K)$ one has
$$\nabla \d_K (p) = \frac{p - \pi_K(p)}{\|p - \pi_K(p)\|}.$$
\end{proposition}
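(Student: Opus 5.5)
We prove the two implications separately, using the squared distance, which turns the problem into a comparison of quadratic expressions. Fix $p\in\E^n\setminus K$ and write $r=\d_K(p)>0$.

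\emph{(2) $\Rightarrow$ (1).} Suppose $\Gamma_K(p)=\{q\}$ and set $u=(p-q)/r$. We show that $\d_K$ is differentiable at $p$ with gradient $u$.

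For the upper bound, take any $h$. Since $q\in K$,
$$\d_K(p+h)\leq \|p+h-q\| = r+\langle u,h\rangle+o(\|h\|).$$

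For the lower bound, the key fact is continuity of projections: if $h_k\to 0$ and $q_k\in\Gamma_K(p+h_k)$, then $q_k\to q$. Indeed, the $q_k$ are bounded, and any cluster point lies in $K$ (closed) and is at distance $r$ from $p$, hence lies in $\Gamma_K(p)=\{q\}$. Now pick $q_h\in\Gamma_K(p+h)$. Then
$$\d_K(p+h)^2=\|p-q_h\|^2+2\langle h,p-q_h\rangle+\|h\|^2\geq r^2+2\langle h,p-q_h\rangle+\|h\|^2 .$$
Since $p-q_h\to p-q=ru$, we get $\d_K(p+h)^2\geq r^2+2r\langle u,h\rangle+o(\|h\|)$. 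Taking square roots (allowed because $r>0$) gives $\d_K(p+h)\geq r+\langle u,h\rangle+o(\|h\|)$.

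The two bounds together give differentiability at $p$ with $\nabla\d_K(p)=u$. This is exactly the formula in the statement.

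\emph{(1) $\Rightarrow$ (2).} Suppose $\d_K$ is differentiable at $p$, and let $q\in\Gamma_K(p)$ be arbitrary. Along the segment toward $q$, for $t\in[0,1]$,
$$\d_K\big(p+t(q-p)\big)\leq (1-t)\,r .$$
Comparing with the first-order expansion at $t=0$ gives $\langle\nabla\d_K(p),\,q-p\rangle\leq -r$. Since $\d_K$ is $1$-Lipschitz, $\|\nabla\d_K(p)\|\leq 1$, and $\|q-p\|=r$. Cauchy–Schwarz then forces equality, which pins down the projection:
$$q=p-r\,\nabla\d_K(p).$$
The right-hand side does not depend on the choice of $q$, so $\Gamma_K(p)$ is a singleton and $\pi_K$ is defined at $p$.

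The only nonroutine step is the lower bound in the first implication, because it requires controlling projections of nearby points. The compactness/continuity argument for the set-valued map $\Gamma_K$ handles this.
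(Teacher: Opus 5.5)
Your proof is correct and complete. The paper does not prove this proposition: it is recalled in Section~2 as a classical fact, with a pointer to Federer's 1959 paper, so there is no in-paper argument to compare against.

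Your argument is the standard one, and each step holds up:
\begin{itemize}
\item For the upper bound in (2)$\Rightarrow$(1), the estimate $\d_K(p+h)\le\|p+h-q\|$ coming from the fixed projection $q$ is enough.
\item For the lower bound, you use continuity of the projection set $\Gamma_K(p+h)$ at a point with a unique projection. Your compactness argument for this is sound: the $q_k$ are bounded by $r+2\|h_k\|$, and cluster points land in $\Gamma_K(p)$ by closedness of $K$ and continuity of $\d_K$.
\item For (1)$\Rightarrow$(2), you take the one-sided directional derivative toward an arbitrary projection and combine it with $\|\nabla\d_K(p)\|\le 1$ and the equality case of Cauchy--Schwarz. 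This also yields $\|\nabla\d_K(p)\|=1$ and recovers the gradient formula a second time.
\end{itemize}

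One small point is worth making explicit. The ``furthermore'' clause only makes sense for $p\notin K\cup\Medial(K)$, because on $K$ the right-hand side is $0/0$. That is exactly the case your first implication treats.
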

A notion of generalized gradient of the distance function $\d_K$ that is also defined on the medial axis $\Medial(K)$ was introduced by André Lieutier in 2004~\cite{LIEUTIER20041029}.
\begin{definition} \label{def: gradient_generalise}
Let $K$ be a closed subset of $\E^n$. 
The generalized gradient of $\d_K$ at  $p \in \E^n \setminus K$ is defined by
$$\nabla \d_K (p) := \frac{p - \Omega_K(p)}{\d_K(p)}$$
where $\Omega_K(p)$ is the center of the smallest ball containing $\Gamma_K(p)$ (see Figure \ref{fig: gradient_generalise}).
\end{definition}
Remark that the generalized gradient $\nabla \d_K$ coincides with the classical one at points where $\d_K$ is differentiable.  In the sequel, we denote by $\mathbb{B}_K(p)$ the smallest ball containing $\Gamma_K(p)$ and by $\mathscr{R}_K(p)$ its radius. Its boundary sphere is denoted $\mathbb{S}_K(p)$.
The following relation will be useful further on (see \cite{LIEUTIER20041029}).
\begin{proposition} \label{prop: expression_grad_g_rayon}
Let $K$ be a closed subset of $\E^n$ and let $p \in \E^n \setminus K$ then
$$ \| \nabla \d_K (p) \| = \sqrt{ 1 - \bigg( \frac{\mathscr{R}_K(p)}{\d_K(p)} \bigg)^2} .$$
\end{proposition}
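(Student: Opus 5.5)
Set $d=\d_K(p)>0$ and write $\Omega=\Omega_K(p)$ and $\mathscr{R}=\mathscr{R}_K(p)$. By Definition~\ref{def: gradient_generalise}, $\|\nabla \d_K(p)\| = \|p-\Omega\|/d$, so the claim reduces to the identity
$$\|p-\Omega\|^2 = d^2 - \mathscr{R}^2 .$$
The idea is that $\Gamma_K(p)$ lies on the sphere $S(p,d)$ of center $p$ and radius $d$. The smallest enclosing ball of a compact subset of a sphere should therefore be centered at the foot of $p$ on a suitable affine subspace, and every point of that subset lying on the boundary of the ball is at distance $d$ from $p$.

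The first step is to recall the standard characterization of the minimal enclosing ball of a compact set $\Gamma$. It exists and is unique. Moreover, its center $\Omega$ lies in the convex hull of the contact set $\Gamma\cap\mathbb{S}_K(p)$, which consists of the points of $\Gamma$ at distance exactly $\mathscr{R}$ from $\Omega$. Otherwise a separating hyperplane gives a direction $v$ such that $\langle q-\Omega, v\rangle>0$ for all contact points $q$. Moving $\Omega$ slightly along $v$ then strictly decreases the maximal distance to $\Gamma$, using compactness for the non-contact points, which contradicts minimality. I would prove this briefly or cite it as classical, following \cite{LIEUTIER20041029}.

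The second step is the computation. Pick contact points $q_1,\dots,q_k$ and weights $\lambda_i\geq 0$ with $\sum\lambda_i=1$ and $\Omega=\sum\lambda_i q_i$. For each $i$ we have
$$\|p-q_i\|^2=\|p-\Omega\|^2+2\langle p-\Omega,\Omega-q_i\rangle+\|\Omega-q_i\|^2 ,$$
where $\|p-q_i\|=d$ and $\|\Omega-q_i\|=\mathscr{R}$. Multiplying by $\lambda_i$ and summing, the cross term becomes $2\langle p-\Omega,\Omega-\sum\lambda_i q_i\rangle=0$. This leaves $d^2=\|p-\Omega\|^2+\mathscr{R}^2$, which is the desired identity. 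Dividing by $d^2$ and taking square roots gives the formula, since $d^2-\mathscr{R}^2\geq 0$ holds automatically by the identity.

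The only real obstacle is the first step, namely justifying that the center lies in the convex hull of the contact points. Its perturbation argument needs care with compactness, to keep the non-contact points strictly inside the ball after a small move. Everything after that is a one-line Pythagorean computation.
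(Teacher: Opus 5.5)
Your proof is correct. It has two steps: the contact-set characterization of the minimal enclosing ball, and then the averaged Pythagorean identity. Together they give exactly $\|p-\Omega_K(p)\|^2=\d_K(p)^2-\mathscr{R}_K(p)^2$. In the first step, strict separation works because the convex hull of the compact contact set is compact, and compactness also gives the uniform margin $\delta>0$ that the perturbation needs.

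The paper does not prove this proposition; it only refers to Lieutier. Your argument therefore supplies a proof the text omits, so there is no route in the paper to compare it against.

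If you want to avoid the perturbation step, there is a slightly more economical route. It also matches how the generalized gradient is usually described in the $\mu$-reach literature. Let $\theta$ be the point of $\mathrm{conv}(\Gamma_K(p))$ nearest to $p$.
\begin{itemize}
\item \emph{Upper bound.} The obtuse-angle criterion $\langle p-\theta, q-\theta\rangle\leq 0$, together with $\|p-q\|=\d_K(p)$, gives $\|q-\theta\|^2\leq \d_K(p)^2-\|p-\theta\|^2$ for every $q\in\Gamma_K(p)$.
\item \emph{Lower bound.} Write $\theta=\sum\lambda_i q_i$ with $q_i\in\Gamma_K(p)$. Your averaging computation, run with $\theta$ in place of $\Omega_K(p)$, gives $\sum\lambda_i\|q_i-\theta\|^2=\d_K(p)^2-\|p-\theta\|^2$. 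Combined with $\sum\lambda_i\|q_i-c\|^2=\sum\lambda_i\|q_i-\theta\|^2+\|\theta-c\|^2$, this shows that any center $c\neq\theta$ needs a strictly larger radius.
\end{itemize}
This identifies $\Omega_K(p)=\theta$ and proves uniqueness of the enclosing ball at the same time. The only input is projection onto a compact convex set.

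Conversely, your identity gives $2\langle p-\Omega_K(p), q-\Omega_K(p)\rangle=\|q-\Omega_K(p)\|^2-\mathscr{R}_K(p)^2\leq 0$ for all $q\in\Gamma_K(p)$. So your route also recovers this nearest-point description of $\Omega_K(p)$.
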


\begin{remark}\label{rem:reachrecast} We see from the definition of the generalized gradient that $\d_K$ is differentiable at a point $p \in \E^n \setminus K$ if and only if $\| \nabla \d_K(p) \| = 1$. This allows to recast the medial axis as the set of points where $\d_K$ is not differentiable, and to recast the reach of $K$ as
$$
\reach(K) = \text{sup} \{ r\geq 0 ~ \vert ~ \forall p\in K^r\setminus K  ~\|\nabla \d_K(p)\| =1\}.
$$
\end{remark}

\subsection{Critical function and $\mu$-reach}
The notions of critical function and $\mu$-reach were introduced in 2006 in the context of geometric inference in order to get stability results~\cite{chazal2006sampling,Chazal-Lieutier}. As mentioned in Remark~\ref{rem:reachrecast}, the reach corresponds  the size of the offset where the generalized gradient is of norm $1$. This suggests the definition of a relaxed version of the reach that is called $\mu$-reach. 
\begin{definition}
Let $\mu \leq 1$. The $\mu$-reach of a closed set $K \subset \E^n$ is defined by:
$$\reach_\mu(K) := \sup \{r ~ \vert ~ \forall p\in K^r ~ \| \nabla \d_K(p)\| \geq \mu \text{ on } K^r \}.$$
\end{definition}
In order to study the behavior of the norm of the generalized gradient when we get closer to the set $K$, it will be convenient to use the critical function defined below. This function scans the level sets of the distance function by looking for the smallest value of the generalized gradient.
\begin{definition}
The critical function $\chi_K : ]0,+\infty[ \to \R_+$ of a closed set $K \subset \E^n$ is the real function defined by
$$\chi_K(d) := \underset{p \in \partial K^d}{ \text{inf} }\vert \vert \nabla \d_K(p) \vert \vert$$
with $\partial K^d = \{ x \in \E^n ~|~ \d_K(x) = d \}$ the  boundary of the $d$-offset of $K$.
\end{definition} 
\noindent Obviously, the $1$-reach corresponds to the reach, namely we have $\reach_1 (K) = \reach(K)$.

\begin{figure}[H]
       \centering
       \begin{subfigure}[b]{0.42\textwidth}
        \centering  \includegraphics[width=1\linewidth]{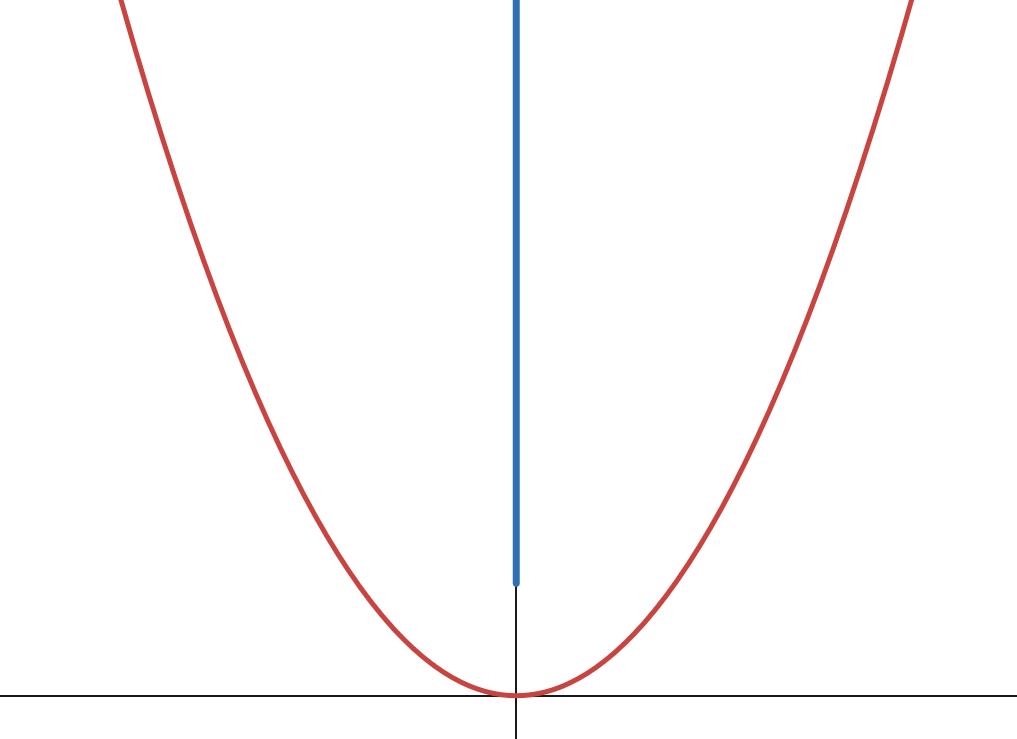}
       \end{subfigure}
       \hfill
       \centering
       \begin{subfigure}[b]{0.57\textwidth}
        \centering  \includegraphics[width=1\linewidth]{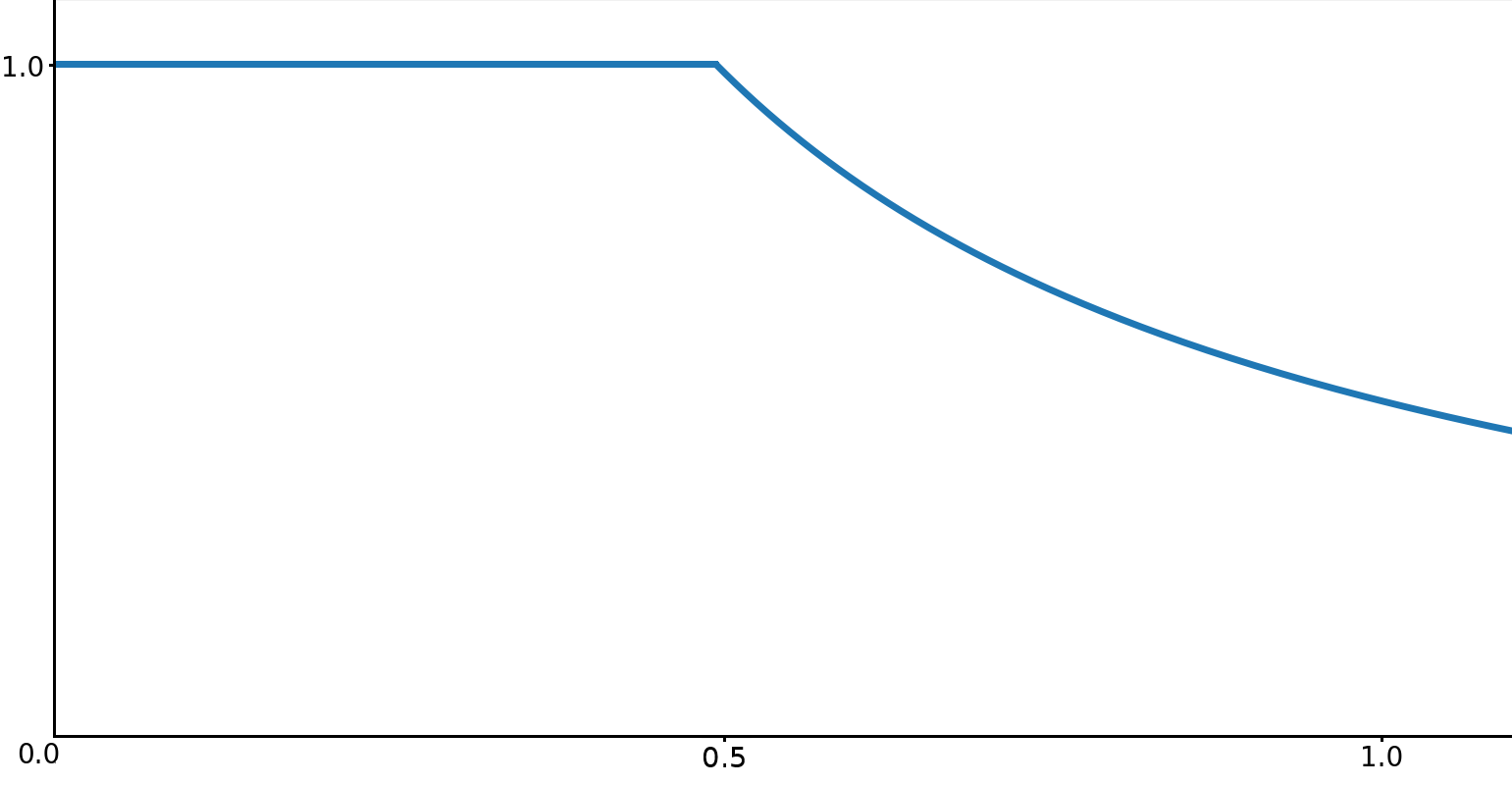}
       \end{subfigure}
      \caption[]
        {\centering Left: the parabola $x^2$ and its medial axis (blue). Right: graph of its critical function. Since $\reach(K) = \frac{1}{2}$ we have $\chi(d)=1$ for $d\leq \frac{1}{2}$.}
      \label{fig: exemple_critical_function}
\end{figure}

\section{On the $\mu$-reach of $C^1$ submanifolds}\label{sec:proofTheoremC1}
Recall that Theorem~\ref{theo: C1_implique_continuite_fonction_crit} states that if $M$ is a compact $C^1$ submanifold of $\E^n$ without boundary, then it has positive $\mu$-reach for any $\mu < 1$. 
The goal of this section is to prove this theorem and its corollary. Before that, we make two comments. 

\paragraph{Compactness assumption of Theorem~\ref{theo: C1_implique_continuite_fonction_crit}.}
The compactness assumption in Theorem~\ref{theo: C1_implique_continuite_fonction_crit} is necessary.
Indeed, for any $\mu\, \in ]0,1[$, we can construct a  $C^1$ curve $M$ in $\E^2$ such that for all $\mu' > \mu$, $\reach_{\mu'}(M) = 0$.
The idea of the construction is to take a triangle wave of amplitude 1 and period $T = 2  \sqrt{(1/\mu^2)-1}$ and  to replace neighborhoods of its vertices by arc  of  circles whose radius decrease the further you get from the origin and tend to zero at infinity, see Figure~\ref{fig:contre_exemple_non_compact}.

\begin{figure}[!ht]
    \centering
    \includegraphics[width=0.95\linewidth]{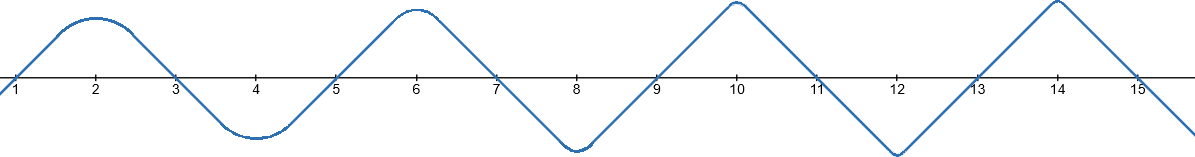}
    \caption{Example of a non compact $C^1$ curve whose $\mu$-reach vanishes for $\mu > \frac{1}{2}$.}
    \label{fig:contre_exemple_non_compact}
\end{figure}

To be more precise, for each triangle, the medial axis is a half line starting at the center of the quarter of circle. On the part of the medial axis inside each triangle, the norm of the generalized gradient is equal to $\mu$. However, as the radius of the quarter circle edge of that triangle get smaller, the medial axis get closer to the curve. Thus the medial axis touches the curve at the limit. Therefore, for all $\mu' > \mu$, $\reach_{\mu'}(M) = 0$.
\paragraph{Converse of Theorem~\ref{theo: C1_implique_continuite_fonction_crit}.} 
It is natural to wonder whether the converse of Theorem~\ref{theo: C1_implique_continuite_fonction_crit} holds, namely to ask the following question: if $M$ is a compact manifold whose $\mu$-reach is positive for any $\mu<1$, does it imply that it is of class $C^1$ ? 
Recently, for any any $\mu \in ]0,1[$, Antoine Commaret built an example of a set $K_\mu$ that yet has a positive $\mu$-reach and which is non-rectifiable, hence not of class $C^1$~\cite[pp 31 to 37]{TheseCommaret}. The question of building a non $C^1$ compact manifold $K$ that has positive $\mu$-reach for any $\mu<1$ is still open.


\subsection{Proof of Theorem~\ref{theo: C1_implique_continuite_fonction_crit}}
Before going to the proof of Theorem~\ref{theo: C1_implique_continuite_fonction_crit}, we need two lemmas.

\begin{lemma} \label{lem_normale_distnace_min_variete}
Let $M$ be a $C^1$ submanifold of $\E^n$  and $p \in \E^n \setminus M$.
For any $q \in\Gamma_M(p) \setminus \partial M$, the line joining $p$ and $q$ is normal to $M$ at $q$.
\end{lemma}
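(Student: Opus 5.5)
The plan is a first-order variational argument: since $q$ minimizes the squared distance to $p$ over $M$, its derivative along every curve in $M$ through $q$ must vanish at $q$. Concretely, consider the function $f : M \to \R$, $f(x) = \|p - x\|^2$, which is the restriction to $M$ of a smooth function on $\E^n$. Because $q \in \Gamma_M(p)$, we have $f(q) = \d_M(p)^2 \leq f(x)$ for every $x \in M$, so $q$ is a global, hence local, minimum of $f$ on $M$.

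Take any tangent vector $v \in T_qM$. Since $q \notin \partial M$, $q$ is an interior point of the $C^1$ manifold $M$. Using a local $C^1$ parametrization around $q$ (or the graph description of $M$ near $q$), there is a $C^1$ curve $\gamma : ]-\varepsilon, \varepsilon[ \to M$ with $\gamma(0) = q$ and $\gamma'(0) = v$. The interiority is what lets $t$ range over a full two-sided interval; at a boundary point only one-sided curves would be available for some directions, and the argument below would yield only an inequality.

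The function $g(t) = \|p - \gamma(t)\|^2$ is $C^1$ on $]-\varepsilon,\varepsilon[$, attains a minimum at the interior point $t = 0$, and has derivative $g'(t) = -2\langle p - \gamma(t), \gamma'(t)\rangle$. Fermat's rule gives $g'(0) = -2\langle p - q, v\rangle = 0$. As $v \in T_qM$ was arbitrary, $p - q \perp T_qM$. Note that $p \neq q$ because $p \notin M$, so $p - q$ is a nonzero vector and the line through $p$ and $q$ is well defined. It is spanned by $p - q \in (T_qM)^\perp$, hence it is normal to $M$ at $q$.

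None of these steps is a real obstacle. The only point needing care is producing a two-sided $C^1$ curve through $q$ with prescribed velocity, which uses only $C^1$ regularity and the hypothesis $q \notin \partial M$.
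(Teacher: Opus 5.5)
Your proposal is correct and follows essentially the same route as the paper: pick a two-sided $C^1$ curve in $M$ through the interior point $q$ with prescribed tangent vector, and apply Fermat's rule to the squared distance to $p$ at $t=0$ to get $\langle p-q, v\rangle = 0$. Your extra remark that $p\neq q$ (so the line is well defined) is a small point the paper leaves implicit.
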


\begin{proof}
Let $t\in T_q M$ be any tangent vector to $M$ at $q$. Since $M$ is of class $C^1$ and $q$ belongs to the interior of $M$, there exists $\varepsilon>0$ and a curve $\gamma:]-\varepsilon,\varepsilon[\to M$ of class $C^1$ such that $\gamma(0) = q$ and $\gamma'(0) = t$. Since $q$ is a projection of $p$, the map $f(x):=\|\gamma(x)-p\|^2$ has a minimum at $x=0$, which implies that 
$$
0=f'(0) = 2 \sca{\gamma(0)- p}{\gamma'(0)}=\sca{q-p}{t},
$$
hence the result.
\end{proof}

\begin{remark}
The hypothesis $q \notin \partial M$ is necessary. 
As a counterexample, consider $M := [0,1] \times \{ 0 \}$ in $\E^2$ and $p = (0,2)$. Then $q= (0,1)$, and thus the line connecting $p$ and $q$ is not normal to $M$.
\end{remark}


We recall the radius of a set $X \subset \E^n$ is the radius of the smallest ball enclosing X, while the diameter of $X$ is the maximal distance between two points in $X$.
Jung's theorem \cite{Jung1901}, is the following inequality between the radius and the diameter:

\begin{theorem}[Jung 1901] \label{theo: Jung}
    If $X$ is a subset of $\E^n$, then $$ \sqrt{2\big( 1 + \frac{1}{n}\big)} ~ \mathrm{Radius}(X) \leq \mathrm{Diam}(X)$$
\end{theorem}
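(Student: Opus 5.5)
Since this is the classical theorem of Jung, we plan to give the standard self-contained argument: combine the minimal enclosing ball with Carathéodory's theorem and a weighted averaging of squared distances.

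\emph{Reductions.} If $X$ is unbounded, both sides are infinite and there is nothing to prove. If $X$ is empty or a single point, both sides vanish. Replacing $X$ by its closure $\overline{X}$ changes neither the radius nor the diameter. We may therefore assume that $X$ is compact and contains at least two points. Let $c$ be the center and $R>0$ the radius of the smallest enclosing ball; it exists by compactness. Translate so that $c=0$. Let $S=\{x\in X : \|x\|=R\}$ be the set of contact points, a nonempty compact set.

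\emph{Main step: $0$ lies in the convex hull of $S$.} This is the step we expect to be the main obstacle, since it is where minimality of the ball is used. Suppose $0\notin \mathrm{conv}(S)$. Since $\mathrm{conv}(S)$ is compact and convex, a separating hyperplane gives a unit vector $v$ and a number $\eta>0$ with $\sca{x}{v}\geq \eta$ for all $x\in S$.

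By compactness and continuity, there is a neighbourhood $U$ of $S$ on which $\sca{x}{v}\geq \eta/2$. On the compact set $X\setminus U$, the quantity $\|x\|$ is at most $R-\delta$ for some $\delta>0$.

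Now move the center to $tv$ with $t>0$ small, and compute
\[ \|x-tv\|^2=\|x\|^2-2t\sca{x}{v}+t^2 . \]
For $x\in X\cap U$ this is at most $R^2-t\eta+t^2<R^2$ once $t<\eta$. For $x\in X\setminus U$, we have $\|x-tv\|\leq R-\delta+t<R$ once $t<\delta$. Hence a ball of radius strictly less than $R$ contains $X$, which contradicts minimality.

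\emph{Carathéodory.} By Carathéodory's theorem, since $0\in\mathrm{conv}(S)$, we can write $0=\sum_{i=1}^m \lambda_i x_i$ with $x_i\in S$, $\lambda_i\geq 0$, $\sum_i\lambda_i=1$ and $m\leq n+1$. We discard points with $\lambda_i=0$. Then $m\geq 2$, since $m=1$ would force $x_1=0$, contradicting $\|x_1\|=R>0$.

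\emph{Averaging.} Fix an index $j$. Using $\|x_i\|=R$ for all $i$ and $\sum_i\lambda_i x_i=0$, we get
\[ \sum_i \lambda_i\|x_i-x_j\|^2=\sum_i\lambda_i\big(2R^2-2\sca{x_i}{x_j}\big)=2R^2 . \]
The term $i=j$ vanishes and every other term satisfies $\|x_i-x_j\|^2\leq \mathrm{Diam}(X)^2$. Therefore
\[ 2R^2\leq (1-\lambda_j)\,\mathrm{Diam}(X)^2 . \]
Summing over $j=1,\dots,m$ gives
\[ 2mR^2\leq (m-1)\,\mathrm{Diam}(X)^2 , \]
that is,
\[ \mathrm{Diam}(X)^2\geq 2\Big(1+\frac{1}{m-1}\Big)R^2\geq 2\Big(1+\frac1n\Big)R^2 , \]
because $m-1\leq n$. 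Taking square roots gives the inequality of Theorem~\ref{theo: Jung}.
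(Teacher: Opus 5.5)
Your proof is correct. It is the standard argument for Jung's theorem: take the minimal enclosing ball, show that its center lies in the convex hull of the contact points, apply Carath\'eodory, then use the weighted identity $\sum_i \lambda_i\|x_i-x_j\|^2 = 2R^2$ and sum over $j$.

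All the steps check out. In particular, the perturbation of the center to $tv$ correctly contradicts minimality. One small precision: take $U$ to be the open half-space $\{x : \sca{x}{v} > \eta/2\}$, so that $X\setminus U$ is compact and $\max_{X\setminus U}\|x\| < R$ is attained.

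The paper gives no proof of this statement. It quotes the result from Jung's 1901 paper and uses it only in Lemma~\ref{lem: consequnce_Jung}, with $X=\Gamma_K(p)$. That set is already compact, so your reductions (unbounded sets, closure) are harmless but not needed for the paper's purposes.

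Compared with the bare citation, your route makes the paper self-contained. It also gives a slightly sharper intermediate bound, $\mathrm{Diam}(X)^2 \geq \frac{2m}{m-1}\,\mathrm{Radius}(X)^2$, where $m\leq n+1$ is the number of contact points in the Carath\'eodory representation. For instance, when $\Gamma_K(p)$ consists of exactly two points, as for the curves $\C_\alpha$, this gives $\mathrm{Radius}=\mathrm{Diam}/2$ directly, with no dependence on $n$. The citation buys only brevity.
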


\begin{lemma} \label{lem: consequnce_Jung}
    Let $K$ be a closed subset of $\E^n$, $p \in \mathscr{M}(K)$ and $q_1$ and $q_2$ two projections of $p$ on $K$ that realize the diameter of $\Gamma_K(p)$.
    Then, the generalized gradient satisfies
     $$\| \nabla \d_K(p) \|^2 \geq 1 - \frac{ 1 - \langle {n_1}, {n_2} \rangle }{1 + \frac{1}{n} },$$
    where ${n_1} :=(p-q_1)/\|p-q_1\|$, ${n_2} :=(p-q_2)/\|p-q_2\|$.
\end{lemma}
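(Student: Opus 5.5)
The plan is to combine Proposition~\ref{prop: expression_grad_g_rayon} with Jung's theorem (Theorem~\ref{theo: Jung}) applied to the compact set $X=\Gamma_K(p)$. Set $d=\d_K(p)$ and $\mathscr{R}=\mathscr{R}_K(p)$, the radius of $\Gamma_K(p)$. Proposition~\ref{prop: expression_grad_g_rayon} gives
$$\|\nabla \d_K(p)\|^2 = 1-\frac{\mathscr{R}^2}{d^2},$$
so it suffices to bound $\mathscr{R}^2/d^2$ from above by $(1-\langle n_1,n_2\rangle)/(1+\frac1n)$.

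First, compute the diameter of $\Gamma_K(p)$ in terms of $n_1,n_2$. Since $q_1,q_2$ realize the diameter and both lie on the sphere of center $p$ and radius $d$, we have $q_i = p - d\,n_i$. Hence
$$\mathrm{Diam}(\Gamma_K(p))^2=\|q_1-q_2\|^2=d^2\|n_1-n_2\|^2=2d^2\big(1-\langle n_1,n_2\rangle\big),$$
using $\|n_1\|=\|n_2\|=1$.

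Next, Jung's theorem applied to $X=\Gamma_K(p)$ gives $2(1+\frac1n)\mathscr{R}^2\leq \mathrm{Diam}(\Gamma_K(p))^2$. Combining this with the previous identity yields
$$\frac{\mathscr{R}^2}{d^2}\leq \frac{1-\langle n_1,n_2\rangle}{1+\frac1n}.$$
Substituting into the expression from Proposition~\ref{prop: expression_grad_g_rayon} gives the claimed inequality. Note that $d>0$ because $p\in\mathscr{M}(K)\subset \E^n\setminus K$, so the division by $d$ is legitimate.

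There is no real obstacle here. The only points to verify are that the smallest enclosing ball of $\Gamma_K(p)$ has the same radius that appears in Jung's theorem, which holds by definition, and that the diameter is attained because $\Gamma_K(p)$ is compact, as noted in the text.
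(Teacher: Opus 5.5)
Your proposal is correct and follows the paper's own proof essentially step for step. You express $\|\nabla \d_K(p)\|$ through the radius $\mathscr{R}_K(p)$ via Proposition~\ref{prop: expression_grad_g_rayon}, bound that radius by $\|q_1-q_2\|/\sqrt{2(1+\frac{1}{n})}$ using Jung's theorem, and conclude with the identity $\|q_1-q_2\|^2 = 2\,\d_K(p)^2\big(1-\langle n_1,n_2\rangle\big)$.
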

\begin{remark} Let $\beta$ is the semi-angle formed by $q_1$, $p$ and $q_2$ (see Figure \ref{fig: gradient_generalise}). From the proof, we see that there is another equivalent formulation that might be convenient in practice
    $$ \| \nabla \d_K(p) \|^2 \geq  1- \frac{2}{1 + \frac{1}{n}} \, \sin^2(\beta).$$
\end{remark}
\begin{proof}
    According to Proposition \ref{prop: expression_grad_g_rayon}, we have 
    $$\| \nabla \d_K (p) \| = \sqrt{ 1 - \bigg( \frac{\mathscr{R}_K(p)}{\d_K(p)} \bigg)^2} .$$
    By Jung's Theorem~\ref{theo: Jung} and the fact $q_1$ and $q_2$ realize the diameter of $\Gamma_K(p)$, we get
    $$ \mathscr{R}_K(p) = \mathrm{Radius}(\Gamma_K(p)) \leq \frac{1}{\sqrt{2\big( 1 + \frac{1}{n}\big)}}  \mathrm{Diam}\big( \Gamma_K(p) \big)
    = \frac{1}{\sqrt{2\big( 1 + \frac{1}{n}\big)}}\|q_1-q_2\|
    ,$$
    thus
    $$\| \nabla \d_K (p) \|^2 \geq 1 - \frac{\|q_1 - q_2\|^2}{2\big( 1 + \frac{1}{n}\big) ~ \d_K(p)^2 } .$$
        Since $\d_K(p)=\|p-q_1\|=\|p-q_2\|$, we get
    $$\|q_1 - q_2\|^2 = \|(q_1 - p) - (q_2 - p)\|^2 = 2 \d_K(p)^2 \big( 1 - \langle {n_1}, {n_2} \rangle \big),$$
and thus
$$\| \nabla \d_K (p) \|^2 \geq 1 - \frac{ 1 - \langle {n_1}, {n_2} \rangle }{1 + \frac{1}{n} }.$$
\end{proof}

\begin{proof}[Proof of Theorem \ref{theo: C1_implique_continuite_fonction_crit}]
By contradiction, we suppose that there exists $\mu < 1$ such that $\reach_\mu(M) > 0$. 
    Thus, there exists a sequence of point $(p_k)_k$ in the medial axis of $M$ such that 
    \[ \d_M(p_k) \underset{k \to \infty}{\longrightarrow} 0 ~~\text{and}~~  \|\nabla \d_M(p_k)\| \leq \mu.\] 
    By compactness of the offset of $M$, we can assume that the sequence $(p_k)_k$ converges toward a limit $p \in M$.
    For any $k$, we denote by $q_k^1$ and $q_k^2$ two projections of $p_k$ that realize the diameter of $\Gamma_M(p_k)$ and define three unitary vectors:
     \[ {n_k^1} := \frac{p_k - q_k^1}{\| p_k - q_k^1 \|}, ~~  {n_k^2} := \frac{p_k - q_k^2}{\| p_k - q_k^2 \|} ~~ \text{and} ~~ {t_k} := \frac{q_k^2 - q_k^1}{\| q_k^2 - q_k^1 \|}.\]
    Up to a subsequence we can assume that the sequence of unitary vectors ${n_k^1}$, ${n_k^2}$ and ${t_k}$ converge. We denote by ${n_1}$, ${n_2}$ and ${t}$ their respective limits.
    Using Lemma \ref{lem_normale_distnace_min_variete}, we know that ${n_k^1}$ is normal to $M$ at the point $q_k^1$, thus by continuity of the application $x \mapsto T_x M$ and of the scalar product, we get that ${n_1}$ is normal to $M$ at $p$. Similarly, we have ${n_2}$ is normal to $M$ at $p$. Since $M$ is of class $C^1$, the limit vector $t$ belongs to the tangent plane $T_pM$.
    
    The end of the proof proceeds as follows: 
    in Step 1, we show that the three vectors ${n_1}$, ${n_2}$ and ${t}$ are all contained in a plane $\Pi$. In Step 2, we show that $n_1$ and $n_2$ are not collinear. Therefore, since $n_1$ and $n_2$ are orthogonal to $T_pM$, the three vectors $n_1,n_2,t$ are independent which contradicts Step 1. \\
   
     \noindent \textbf{Step 1.} We prove here that ${n_1}$, ${n_2}$ and ${t}$ are all contained in a plane $\Pi$.
     Firstly, we remark that for $k$ large enough $p_k$, $q_k^1$ and $q_k^2$ are not  aligned. 
     Otherwise $ {n_k^1} = {t_k}$, which would imply at the limit that ${n_1} = {t}$, and
      since ${t} \in T_p M$, we get that ${n_1} \in T_p M \cap N_p M = \{0\}$. This is absurd since $\| {n_1} \| = 1$. This allows us to define, for $k$ large enough, the unique plane $\Pi_k$ containing $p_k$, $q_k^1$ and $q_k^2$ (see Figure \ref{fig:preuve_continuite_fonction_critique_contruction_plan_pi_k}).
     \begin{figure}[!ht]
    \centering
    \includegraphics[width=0.6\linewidth]{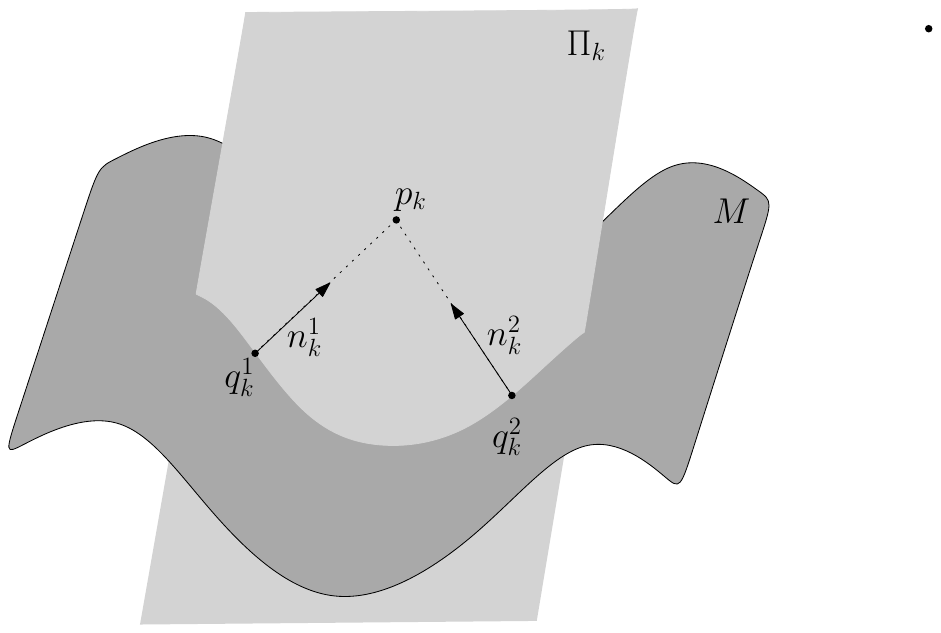}
    \caption{Proof of Theorem~\ref{theo: C1_implique_continuite_fonction_crit}.}
    \label{fig:preuve_continuite_fonction_critique_contruction_plan_pi_k}
\end{figure}
     We now define $A_M$ as the set of all the affine planes passing through a point of $M$ and let
     \[ \begin{array}{ccccc}
        f & : & M \times \mathrm{Gr}_2(\E^n) & \longrightarrow & A_M  \\
          & & (x,P) & \longmapsto & x + P \\
    \end{array}\]
    where $ \mathrm{Gr}_2(\E^n)$ is the grassmanian space of $2$-planes in  $\E^n$ .
    The map $f$ is surjective and continuous and since  $M \times \mathrm{Gr}_2(\E^n)$ is compact, we get that $A_M$ is compact. 
    Thus, since the $\Pi_k$'s are in $A_M$, up to a subsequence the sequence of planes $(\Pi_k)_k$ converges to a plane $\Pi \in A_M$ that contains ${n_1}$, ${n_2}$ and ${t}$. \\

    \noindent \textbf{Step 2.} 
    By Lemma \ref{lem: consequnce_Jung}, we have for every $k$ 
    $$\mu^2 \geq \| \nabla \d_M (p_k) \|^2 \geq 1 - \frac{ 1 - \langle{n_k^1},{n_k^2} \rangle }{1 + \frac{1}{n} } = \frac{\langle{n_k^1},{n_k^2} \rangle + \frac{1}{n} }{1 + \frac{1}{n}},$$
which implies that
    \[
    \sca{{n_k^1}}{{n_k^2}} \leq \left( 1+\frac{1}{n}\right) \mu^2 - \frac{1}{n} \leq \mu^2.
    \]
   When $k$ tends to infinity, we get $\sca{n_1}{n_2} \leq  \mu^2 <1$, hence $n_1\neq n_2$.
   
    Suppose now that $n_1=-n_2$. This would imply that for $k$ large enough, the three points $q_k^1$, $p_k$ and $q_k^2$ are almost aligned in this order. Therefore, we would have $n_1=-n_2=t$ at the limit. Hence ${t} \neq 0$ would be both tangent and normal to $M$ at $p$, which is impossible. We deduce that ${n_1}$ and ${n_2}$ are not collinear, which contradicts Step 1. 
\end{proof}

\subsection{Proof of Corollary~\ref{coro:offsetshavepositivereach}}

The proof of Corollary~\ref{coro:offsetshavepositivereach} is a direct consequence of two results in~\cite{fu1985tubular} and~\cite{chazal2007shape}. It was first proved in~\cite[Theorem 4.1]{fu1985tubular} that if $r$ is not a \textit{critical value} of the distance function $\d_M$, then the closure  $L=\overline{\E^n\setminus M^r}$ of the complement of the $r$-offset of $M$ has positive reach and its boundary $\partial L=\partial M^r$ is a Lipschitz manifold. In~\cite{fu1985tubular}, $r$ is a critical value means that the generalized gradient $\nabla\d_M$ vanishes on the level set $\d_M^{-1}(r)$, which is obviously not the case in our setting by the positive $\mu$-reach assumption. The bound on the reach was made explicit in~\cite[Theorem 4.1]{chazal2007shape} for sets with positive $\mu$-reach and shown to be more than $\mu r$. 
The second point of the corollary is a consequence of Federer's result that the level sets with positive reach are of class $C^{1,1}$.

\section{Critical function of $C^{1,\alpha}$  submanifolds}\label{sec:theoremC1alpha}
Theorem \ref{theo: C1_implique_continuite_fonction_crit} can be recast with the critical function as
\[\underset{d \to 0}{\lim} \, \chi_M(d) = 1.\]
Hence, $\chi_M$ can be extended by continuity at 0 by putting $\chi_M(0)=1$.
The critical function  appears as a natural object to measure the speed of convergence of the generalised gradient when approaching the submanifold. 
The goal of this section is to prove Theorem~\ref{theo: majoration_fonction_critique} which states that if
 $M$ is a $C^{1,\alpha}$ compact submanifold of $\E^n$ without boundary with $\alpha \in [0,1]$, 
then there exists a constant $C > 0$ such that, for all $p \in \E^n \setminus M$,
$$  1 - \| \nabla \d_M(p) \|^2  \leq C ~\d_M(p)^{2 \alpha} \Big( 1 - \| \nabla \d_M(p) \|^2 \Big)^{\alpha}.$$
In particular, if $\alpha <1$, then for every $p \in \E^n \setminus M$ 
$$
 1 - \| \nabla \d_M(p) \|^2  \leq C^{\frac{1}{1-\alpha}} ~ \d_M(p)^{\frac{2 \alpha}{1- \alpha}}.
$$
This last inequality can also be rephrased using the critical function, which gives the speed of convergence of $\chi_M$ at $0$, as stated in the following corollary.
\begin{corollary} \label{coro: majoration_fonction_critique}
Let $M$ be a $C^{1,\alpha}$ compact submanifold of $\E^n$ without boundary with $\alpha \in [0,1[$.
Then there exists a constant $C > 0$ such that for all $d \in \R_+$,
$$0 \leq  1 - \chi_M(d)^2 \leq C ~ d^{\frac{2 \alpha}{1- \alpha}}.$$
In particular, if $$ \frac{1}{3} < \alpha \leq  1 $$ then $\chi_M$ is differentiable at zero and $\chi_M'(0) = 0$.
\end{corollary}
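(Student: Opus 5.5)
The plan is to read the corollary directly off Theorem~\ref{theo: majoration_fonction_critique}, which we may assume at this point, and then to treat differentiability at zero as a first-order expansion of $\chi_M$.

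\textbf{The bound.} Fix $d>0$. For every $p\in\partial M^d$ we have $\d_M(p)=d$ and $0\le \|\nabla \d_M(p)\|\le 1$. The case $\alpha<1$ of Theorem~\ref{theo: majoration_fonction_critique}, with constant $C_0$, gives
\[
\|\nabla\d_M(p)\|^2 \;\geq\; 1 - C_0^{\frac{1}{1-\alpha}}\, d^{\frac{2\alpha}{1-\alpha}} .
\]
The right-hand side depends only on $d$, so we may take the infimum over $p\in\partial M^d$. Using that $x\mapsto x^2$ is increasing on $[0,1]$, so that $\inf_p \|\nabla\d_M(p)\|^2=\chi_M(d)^2$, we obtain
\[
1-\chi_M(d)^2 \;\leq\; C\, d^{\frac{2\alpha}{1-\alpha}}, \qquad C:=C_0^{\frac{1}{1-\alpha}} .
\]
The lower bound $0\le 1-\chi_M(d)^2$ holds because $\chi_M(d)\le 1$. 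When $\partial M^d$ is empty, which happens for $d$ larger than the radius of the largest ball in the complement, the infimum is $+\infty$ or degenerate. We handle this by convention, or simply by restricting to the $d$ for which $\partial M^d\neq\emptyset$. At $d=0$ we use the extension $\chi_M(0)=1$ from Theorem~\ref{theo: C1_implique_continuite_fonction_crit}, and the inequality is trivial there.

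\textbf{Differentiability at $0$.} Write $\beta=\frac{2\alpha}{1-\alpha}$. Since $\chi_M(d)\in[0,1]$, we have $1+\chi_M(d)\ge 1$, so
\[
0\le 1-\chi_M(d) \;=\; \frac{1-\chi_M(d)^2}{1+\chi_M(d)} \;\le\; C d^{\beta}.
\]
It follows that
\[
\left|\frac{\chi_M(d)-\chi_M(0)}{d}\right| \;\le\; C d^{\beta-1},
\]
which tends to $0$ as $d\to0^+$ exactly when $\beta>1$. The condition $\beta>1$ reads $2\alpha>1-\alpha$, that is $\alpha>1/3$. Since $\chi_M$ is defined only on $[0,+\infty[$, differentiability at $0$ means the one-sided derivative, and we conclude $\chi_M'(0)=0$.

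The endpoint $\alpha=1$ is not covered by the exponent estimate, so it needs a separate argument. There we use the second bullet of Theorem~\ref{theo: majoration_fonction_critique}: $\chi_M\equiv1$ on $]0,1/\sqrt{C}[$, so $\chi_M$ is constant near $0$ and again $\chi_M'(0)=0$.

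I do not expect a real obstacle. The only care needed is in commuting the infimum with the monotone map $x\mapsto x^2$ and in handling empty level sets and the endpoint $\alpha=1$.
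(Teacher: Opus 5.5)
Your proof is correct and follows the paper's argument: you take the infimum over $\partial M^d$ in Theorem~\ref{theo: majoration_fonction_critique}, then use $1+\chi_M(d)\geq 1$ to get $|\chi_M(d)-1|/d\leq C\,d^{(3\alpha-1)/(1-\alpha)}$, and your separate treatment of $\alpha=1$ via the second bullet of that theorem covers a case the paper's proof leaves implicit. One aside is wrong but harmless: for nonempty compact $M$ the level set $\partial M^d$ is never empty when $d>0$, since $\d_M$ is continuous and unbounded on the connected space $\E^n$, so no convention for empty level sets is needed.
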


\noindent In Section \ref{sec: Deux_exemples} we will prove the sharpness of this inequality on one example.


\subsection{Basic facts of $C^{1,\alpha}$  analysis}

In this section, we revisit several classical results in analysis related to $C^{1,\alpha}$ regularity and supplement them with quantitative criteria required for the subsequent analysis. 
For completeness, proofs are included; they consist of minor to moderate variations on standard arguments.
In the following $E$ and $F$ denote Banach spaces and $U$ an open subset of $E$. 

\begin{definition}
    We say that $f : X \subset E \to F$ is $\boldsymbol{\alpha}$\textbf{-Hölder} with $\alpha \in ] 0 , 1]$, there exists a constant $C > 0$ such that for every $y,z \in X$
    \[ \| f(y) - f(z)\|_F \leq C \|y -z\|_E^\alpha. \]
\end{definition}

\begin{definition}
    We say that $f : U \to F$ is \textbf{locally} $\boldsymbol{\alpha}$\textbf{-Hölder}\footnote{For some authors this can be the definition of being $\alpha$-Hölder, for instance see the Appendix B of \cite[pp 138 to 143]{ruelle1989elements} } with $\alpha \in ] 0 , 1]$, if each $x \in U$ has a neighborhood $V$ such that $f |_V$ is $\alpha$-Hölder.
\end{definition}

\begin{definition}
    Let $g : U \to F$, we say that $g$ is of \textbf{class} $\boldsymbol{C^{1,\alpha}}$ with $\alpha \in ] 0 , 1]$, if $g$ is of class $C^1$ and if $\d g : U \mapsto \mathscr{L}(E,F)$ is locally $\alpha$-Hölder.
\end{definition}

\noindent  The following result can be extracted from \cite{ANDERSSON1997} except the control of the constant $C$ and $C'$.
\begin{lemma} \label{lem: carac_c_1_alpha}
    Let $g : U \to F$ be a $C^{1}$ function, $\alpha \in [0,1]$ and $U$ a convex open set.
    The following statements are equivalent
    \begin{enumerate}
        \item there exists $C > 0$ such that for all $x,y \in U$,
    \[\opnorm{ \d f_x - \d f_y} \leq C \|x -y\|_E^\alpha,\]
    where $\opnorm{\cdot}$ denotes the operator norm.

        \item there exists $C' > 0$ such that for all $x,y \in U$,
    \[\| f(y) - f(x) - \d f_x(y-x)\|_F \leq C' \|y-x\|_E^{1 + \alpha}.\]
    \end{enumerate}
    Moreover, we have $(1+\alpha)\,  C \leq C' \leq 6 C$. Furthermore, the implication $(2) \implies (1)$ does not require the convex hypothesis on $U$.
\end{lemma}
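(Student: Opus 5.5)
For $(1)\Rightarrow(2)$ I would use the integral form of the mean value theorem, which needs convexity of $U$. For $x,y\in U$ the whole segment $[x,y]$ lies in $U$, so
\[
f(y)-f(x)-\d f_x(y-x)=\int_0^1\big(\d f_{x+t(y-x)}-\d f_x\big)(y-x)\,\d t .
\]
Bounding the integrand with (1) gives
\[
\|f(y)-f(x)-\d f_x(y-x)\|_F\le \int_0^1 C\,t^\alpha\|y-x\|^{1+\alpha}\,\d t=\frac{C}{1+\alpha}\|y-x\|^{1+\alpha}.
\]
So (2) holds with $C'=C/(1+\alpha)$. This is at most $C\le 6C$, so the upper bound $C'\le 6C$ holds for the constant obtained this way. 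If one prefers to avoid Bochner integrals in a Banach space, the same estimate follows from the mean value inequality applied to $t\mapsto f(x+t(y-x))-t\,\d f_x(y-x)$ on small subintervals. That version gives a slightly worse constant (at most $C$), which is still within $6C$.

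For $(2)\Rightarrow(1)$ no convexity is needed, because the argument is purely local and uses only increments that stay inside $U$. Fix $x,y\in U$ and set $h=\|x-y\|$. Take a unit vector $u\in E$ and a scale $s>0$. The idea is to compare $\d f_x(su)$ and $\d f_y(su)$ through the values of $f$ at the three points $x$, $y$, $x+su$, and at $y+su$ when needed. Apply (2) to the pairs $(x,x+su)$, $(y,y+su)$, $(x,y)$ and $(y,x)$, and use the algebraic identity
\[
\big(\d f_x-\d f_y\big)(su)=\big[f(y+su)-f(y)-\d f_y(su)\big]-\big[f(x+su)-f(x)-\d f_x(su)\big]-\big[f(y+su)-f(x+su)\big]+\big[f(y)-f(x)\big],
\]
then express the last differences through Taylor remainders based at $x$ and $y$. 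Two cross terms of the form $\d f_x(y-x)$ and $\d f_y(x-y)$ combine. This leads to a bound of the form
\[
s\,\|(\d f_x-\d f_y)u\|\le C'\big(2s^{1+\alpha}+(s+h)^{1+\alpha}+h^{1+\alpha}\big).
\]
Choosing $s=h$, dividing by $s$ and taking the supremum over $u$ then gives $\opnorm{\d f_x-\d f_y}\le K\,C'\,h^\alpha$ for some numerical constant $K$.

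To get the sharp inequality $(1+\alpha)C\le C'$, I would read $C$ and $C'$ as the optimal constants, and deduce it from the forward direction: the best $C'$ is at most $C/(1+\alpha)$ times a factor coming from the reverse direction, so the two directions must be compared carefully. I expect this bookkeeping of constants to be the main obstacle. A crude three-point argument produces a factor such as $2+2^{1+\alpha}\le 6$, which is presumably where the $6$ comes from. The exact direction of each inequality between $C$ and $C'$ then has to be matched against what the two arguments actually give, and I would adjust the choice of $s$ relative to $h$ so that the numerical factor stays below $6$. One further point to check: if $x+su$ or $y+su$ leaves $U$, the parameter $s$ cannot be taken equal to $h$. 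In that case I would restrict to nearby pairs, or note that the Hölder statement is only needed locally.
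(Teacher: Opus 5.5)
Both of your directions follow the paper's proof. For $(1)\Rightarrow(2)$ you use the same integral estimate along the segment, giving $C'=C/(1+\alpha)$. For $(2)\Rightarrow(1)$ you use the same Taylor-remainder identity with one auxiliary point at distance $h=\|x-y\|$. Write $R_a(b):=f(b)-f(a)-\d f_a(b-a)$. Your four-point identity then collapses, because the terms at $x+su$ cancel:
\[
(\d f_x-\d f_y)(su)=R_y(y+su)-R_x(y+su)+R_x(y).
\]
This is the paper's identity $(\d f_x-\d f_y)(x-z)=R_x(z)+R_y(x)-R_y(z)$ with $x,y$ exchanged and $z=y+su$. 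Taking $s=h$ gives $\opnorm{\d f_x-\d f_y}\le(2+2^{1+\alpha})C'h^{\alpha}\le 6C'h^{\alpha}$. Your uncancelled bound would only give the factor $3+2^{1+\alpha}$, which exceeds $6$ for $\alpha$ near $1$.

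The step you call the main obstacle cannot be proved, because the printed inequality $(1+\alpha)C\le C'\le 6C$ has $C$ and $C'$ interchanged. For $f(t)=t^2$ on $\R$ and $\alpha=1$, the optimal constants are $C=2$ and $C'=1$, so $(1+\alpha)C\le C'$ fails. What the paper's argument (and yours) actually establishes is:
\begin{itemize}
\item (1) with $C$ gives (2) with $C'=C/(1+\alpha)$;
\item (2) with $C'$ gives (1) with $C=(2+2^{1+\alpha})C'\le 6C'$.
\end{itemize}
For optimal constants this reads $(1+\alpha)C'\le C\le 6C'$. So there is nothing to tune in $s$. Your remark that $C/(1+\alpha)\le 6C$ is true but beside the point, since the factor $6$ belongs to the reverse direction.

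Your final worry about $y+su$ leaving $U$ is legitimate, and the paper's proof has the same hole. It uses openness to pick $z\in U$ with $\|z-x\|\le\|y-x\|$. But dividing $2(1+2^{\alpha})C'\|x-y\|^{1+\alpha}$ by $\|z-x\|$ yields $\|x-y\|^{\alpha}$ only if $\|z-x\|=\|x-y\|$.

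In fact the global claim that $(2)\Rightarrow(1)$ needs no convexity is false. Take the convex strip $U=\R\times(0,\varepsilon)$ and $f(x_1,x_2)=x_1x_2$. The remainder is $(y_1-x_1)(y_2-x_2)$, which is bounded by $\max(\tfrac12,\varepsilon)\|y-x\|^{1+\alpha}$ for every $\alpha<1$. Yet $\opnorm{\d f_x-\d f_y}=\|x-y\|$ is unbounded, so (1) fails.

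The correct version is local: (2) on $B(a,3r)$ implies (1) on $B(a,r)$ with constant $6C'$. This is your ``restrict to nearby pairs'' fix. It is all that is needed to conclude that a map is of class $C^{1,\alpha}$, since that notion only asks for a locally H\"older differential.
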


\begin{proof}
    We assume that there exists $C > 0$ such that for all $x,y \in U$,
    \[ \opnorm{\d f_x - \d f_y} \leq C \|y -z\|_E^\alpha.\]
    Since, $U$ is convex, we have
    \begin{align*}
        f(y) - f(x) - \d f_x(y-x) &= \int_0^1 \frac{\d}{\d t} \Big( f\big(x + (y-x)t\big) \Big) \d t -  \d f_x(y-x) \\&=  \int_0^1  \d f_{x + t(y-x) }(y-x) -  \d f_z(y-x) \d t.
    \end{align*}
    Thus 
    \[ \| f(y) - f(x) - \d f_x(y-x)\|_F \leq C \int_0^1 \| t (y-x) \|^\alpha_E \| y-x\|_E \, \d t = \frac{C}{1+\alpha} \|y - x\|_E^{1 + \alpha}. \]
    For the converse implication, we now suppose there exists $C' > 0$ such that for all $x ,y \in U$,
    \[\| f(y) - f(x) - \d f_x(y-x)\|_F \leq C' \|y-x\|_E^{1 + \alpha}.\]
    We also have
    \[\| f(x) - f(y) - \d f_y(x-y)\|_F \leq C' \|y-x\|_E^{1 + \alpha}.\]
    thus using triangular inequality, we get
     \[\| (\d f_x - \d f_y)(x-y)\|_F \leq 2 C' \|y -x\|_E^{1 + \alpha}.\]
     Let $h$ be a unit vectors such that
     \[ \opnorm{ \d f_x - \d f_y} = \| (\d f_x - \d f_y)(h)\|. \]
     Since $U$ is open, there exist $z \in U$ such that  $h := \frac{z-x}{\|z-x\|}$ and $\|z-x\| \leq \| y - x \|$.
     We have
     \begin{align*}
         (\d f_x - \d f_y)(x-z) &= \d f_x(x-z) - \d f_y(x-y) + \d f_y(z-y) \\
         &= \big( f(z) - f(x) - \d f_x(z-x) \big) + \big( f(x) - f(y) - \d f_y(x-y) \big) \\
         &~~~~- \big( f(z) - f(y) - \d f_y(z-y) \big)
     \end{align*}
     thus using triangular inequality three times
     \[ \| (\d f_x - \d f_y)(x-z) \|_F \leq C' \|z-x\|^{1+\alpha} + C'\|x-y\|^{1+\alpha} + C' \| z-y\|^{1+\alpha}.  \]
     Since the function $x \in \R_+ \mapsto x^{1+\alpha}$ is convex, we get 
     $$ \|\frac{1}{2} (z-y) \|^{1+\alpha} \leq \frac{1}{2} \|z-x\|^{1+\alpha} + \frac{1}{2} \|x-y\|^{1+\alpha}.$$
     Thus
     \[ \| (\d f_x - \d f_y)(x-z) \|_F \leq (1 + 2^{\alpha}) C' \|z-x\|^{1+\alpha} +   (1 + 2^{\alpha}) C' \|x-y\|^{1+\alpha} \leq 2 (1 + 2^{\alpha}) C' \|x-y\|^{1+\alpha} \]
     that's why
     \[\opnorm{ \d f_x - \d f_y} \leq  2 (1 + 2^{\alpha}) C' \|x -y\|_E^\alpha \leq  6 C' \|x -y\|_E^\alpha.\]
\end{proof}


\noindent For any $C^1$ submanifold $M$ of $\E^n$, we denote by $g$ the metric on $M$ induced by the inner product of $\E^n$. We also denote by $\d_{g}$ the geodesic distance induced by $g$.

\begin{lemma} \label{lem: distance_geodesique_norme_R_n}
Let $M\subset \E^n$ be a compact $C^1$ submanifold with or without boundary then there exists $\lambda_M >0$ such that
\[\forall (x,y)\in M\times M,\quad \d_\text{g}(x,y) \leq \lambda_M \| x - y \|.\]
\end{lemma}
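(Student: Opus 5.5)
The plan is to prove the inequality first locally, for points $x,y$ that are close, and then to pass to the global statement by a compactness argument.

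\textbf{Local step.} Around each point $q\in M$ we choose a $C^1$ chart. If $M$ has boundary, it is a half-space chart. Concretely, write $M$ near $q$ as the graph of a $C^1$ map $u:V\to\R^{n-m}$ over a convex open set $V$ of the tangent space $T_qM$, or over a convex relatively open subset of a half-space. Shrinking $V$, we may assume $\opnorm{\d u}\leq 1$ on $V$. Take two points $x=(a,u(a))$ and $y=(b,u(b))$ with $a,b\in V$. The segment $[a,b]$ lies in $V$ by convexity, so $\gamma(t)=(a+t(b-a),u(a+t(b-a)))$ is a $C^1$ curve in $M$ joining $x$ to $y$. Its length is at most $\sqrt{2}\,\|b-a\|\leq\sqrt2\,\|x-y\|$, because the projection onto $T_qM$ is $1$-Lipschitz. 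Hence $\d_g(x,y)\leq\sqrt2\|x-y\|$ for $x,y$ in the neighborhood $W_q=\{(a,u(a)) : a\in V\}$ of $q$ in $M$.

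\textbf{Lebesgue number.} By compactness, finitely many of the sets $W_q$ cover $M$. Let $\delta>0$ be a Lebesgue number of this cover for the Euclidean metric restricted to $M$. Then any two points $x,y\in M$ with $\|x-y\|<\delta$ lie in a common $W_q$, and so $\d_g(x,y)\leq\sqrt2\|x-y\|$.

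\textbf{Far pairs.} For pairs with $\|x-y\|\geq\delta$ we bound the geodesic distance uniformly. Here we must handle connectedness. If $M$ is not connected, then $\d_g=+\infty$ between distinct components, and the statement must be understood componentwise. Alternatively, since $M$ has finitely many components, distinct components lie at positive Euclidean distance from one another, and the claim is applied within each component; I will assume $M$ is connected, or restrict to components. On a connected compact $C^1$ manifold, $\d_g$ is finite and continuous, since it is locally controlled by the local step. So by compactness $D:=\sup_{M\times M}\d_g<\infty$. For $\|x-y\|\geq\delta$ this gives $\d_g(x,y)\leq D\leq (D/\delta)\|x-y\|$.

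\textbf{Conclusion.} Setting $\lambda_M=\max(\sqrt2,D/\delta)$ proves the statement.

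\textbf{Main obstacle.} The delicate point is justifying the finiteness of $D$, equivalently the continuity of $\d_g$. I would prove it with the local step: it shows that $\d_g$ is locally bounded by a multiple of the Euclidean distance. Chaining finitely many charts then yields finiteness on a connected $M$, and the triangle inequality yields continuity. A secondary point is the boundary case, where the chart domain must be a convex subset of a half-space so that the straight segment stays in $M$.
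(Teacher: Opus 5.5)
Your argument is correct and is essentially the paper's proof. Both use convex chart domains so that straight segments give curves of length at most a constant times $\|x-y\|$, then a uniform scale from compactness (your Lebesgue number $\delta$ plays the role of the paper's $\varepsilon/2$-balls), then the finite geodesic diameter for far pairs, yielding $\lambda_M=\max(\sqrt2, D/\delta)$ just as the paper gets $\max(C_1C_2, 2\,\mathrm{Diam}_g(M)/\varepsilon)$.

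Two small remarks. First, at a boundary point the orthogonal projection of $M$ onto $T_qM$ is a curved $C^1$ domain, not in general a convex piece of a half-space, so there you should use a general flattening chart as the paper does; the constant $\sqrt2$ is then replaced by bounds on the differentials of the chart and its inverse, the paper's $C_1C_2$. Second, your connectedness caveat is a genuine correction: the paper's claim that $\mathrm{Diam}_g(M)<\infty$ ``since $M$ is compact'' fails when $M$ is disconnected.
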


\begin{proof}
We denote by $m$ the dimension of $M$. Let $x \in M$. 
If $x$ lies on the interior of $M$ there exist $V_x$ an open set of $\R^m$, $\varepsilon_x > 0$ and a $C^1$-diffeomorphism $\varphi_x : M \cap B(x,\varepsilon_x) \to  V_x$.
If $x$ lies on the boundary of $M$ then $V_x$ is an open set of the half space $\R^{m-1} \times \R_+$.
In  both cases, we can further assume that $V_x$ is a convex space. 

Since $M$ is compact, we can find $\varepsilon >0$ and an integer $k \in \mathbb{N}$ such that a number $k$ of open sets of the form $M \cap B(x_i,\frac{\varepsilon}{2})$ cover $M$ (be aware that the radius is $\frac{\varepsilon}{2}$). We then denote by $\varphi_i$ the $C^1$-diffeomorphism $\varphi_{x_i}$ as above.
We then define 
$$ C_1 = \underset{i \in \{ 1 , \cdots, k \}}{\max} \opnorm{\d \varphi_i} ~~ \text{and} ~~  C_2 = \underset{i \in \{ 1 , \cdots, k \}}{\max} \opnorm{\d \varphi^{-1}_i}.$$
Let $x,y \in M$. We distinguish two cases depending on whether $\|x - y\| \leq \frac{\epsilon}{2}$ or not.\\
If $\|x - y\| \leq \frac{\epsilon}{2}$ then there exists $i \in \{ 1, \cdots, k \}$ such that $x,y \in M \cap B(x_i,\varepsilon)$. Indeed, there exists a ball $B(x_i,\varepsilon/2)$ that contains $x$ and since $\|x - y\| \leq \frac{\epsilon}{2}$, the ball $B(x_i,\varepsilon)$ also contains $y$.\\

We then define a path $\gamma:[0,1]\to M$ joining $x$ and $y$ by 
\[\gamma(t)=\varphi_i^{-1} \big( (1-t) x' + t y' \big)\]
where $x' = \varphi_i(x) ~~ \text{and} ~~ y' = \varphi_i(y).$ Since $V_i$ is convex, the path is well defined. We have
\begin{align*}
    \d_{\text{g}}(x,y) &\leq \int_0^1 \|\gamma'(t)\| \d t = \int_0^1 \| (\d \varphi_i^{-1})_{ (1-t) x' + t y'}.(y'-x') \|  \d t \leq C_2  \| y'-x' \|.
\end{align*}
By Mean Value Inequality we also have
\[ \| y'-x' \| = \| \varphi_i(x) -  \varphi_i(y)\| \leq C_1 \| y - x \|  \]
Thus, if $\|x -y\| \leq \frac{\epsilon}{2}$, we obtain
$$ \d_{\text{g}}(x,y) \leq C_1 C_2 \|x - y\|.$$
In the other case where $\|x -y\| > \frac{\epsilon}{2}$, we obviously have
\[ \d_{\text{g}}(x,y)\leq \mathrm{Diam}_g(M) \leq \frac{2 \, \mathrm{Diam}_g(M)}{\varepsilon}\|x - y\|\]
where $\mathrm{Diam}_g(M) := \underset{x,y\in M}{\max} \d_{\text{g}}(x,y)$ is finite since $M$ is compact.
To conclude, the constant
$$\lambda_M = \max \Big( C_1 C_2, \frac{2 \, \mathrm{Diam}_g(M)}{\varepsilon} \Big)$$
fits for the two cases.
\end{proof}


The next result states the implication $(1) \implies (2)$ of Lemma \ref{lem: carac_c_1_alpha} in the case where $U$ is not necessarily convex.

\begin{lemma} \label{lem: carac_c_1_alpha_2}
    Let $U $ be a connected open set of $\E^n$ such that $M := \Bar{U}$ is a compact connected $C^1$ submanifold of $\E^n$, $\alpha \in [0,1]$ and let $f : U \to F$ be a $C^{1}$ function. 
    If there exists $C > 0$ such that for all $x,y \in U$,
    \[\opnorm{ \d f_x - \d f_y} \leq C \|x -y\|_{\E^n}^\alpha,\]
    then for all $x,y \in U$,
    \[\| f(y) - f(x) - \d f_x(y-x)\|_F \leq \frac{C. \lambda_M}{1+\alpha} \|y-x\|_{\E^n}^{1 + \alpha}\]
    with $\lambda_M$ the constant of Lemma \ref{lem: distance_geodesique_norme_R_n}.
\end{lemma}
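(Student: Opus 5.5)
The plan is to replace the straight segment used in the convex case of Lemma~\ref{lem: carac_c_1_alpha} by an almost-minimizing path inside $U$, and then integrate the Hölder bound along it.

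\textbf{Step 1: a short path inside $U$.} Fix $x,y\in U$ and $\varepsilon>0$. By Lemma~\ref{lem: distance_geodesique_norme_R_n}, applied to the compact $C^1$ manifold with boundary $M=\bar U$, we have $\d_g(x,y)\leq \lambda_M\|y-x\|$. Hence there is a $C^1$ (or piecewise $C^1$) path in $M$ from $x$ to $y$ of length at most $\lambda_M\|y-x\|+\varepsilon$.

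Such a path may touch $\partial M$. Near the boundary, $M$ is a $C^1$ half-space chart, so we push the path slightly inward in each chart of a finite cover. We also use that $U$ is open and connected with $\bar U=M$. This gives a path $\gamma:[0,L]\to U$, parametrized by arclength, with $\gamma(0)=x$, $\gamma(L)=y$ and $L\leq \lambda_M\|y-x\|+2\varepsilon$.

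\textbf{Step 2: integrate the Hölder bound.} As in the proof of Lemma~\ref{lem: carac_c_1_alpha}, we write
\[
f(y)-f(x)-\d f_x(y-x)=\int_0^L \big(\d f_{\gamma(s)}-\d f_x\big)(\gamma'(s))\,\d s,
\]
using $\int_0^L\gamma'(s)\,\d s=y-x$. The Hölder hypothesis and $\|\gamma(s)-x\|\leq s$ give
\[
\|f(y)-f(x)-\d f_x(y-x)\|_F\leq C\int_0^L s^\alpha\,\d s=\frac{C}{1+\alpha}L^{1+\alpha}.
\]
Letting $\varepsilon\to 0$ yields the bound $\frac{C}{1+\alpha}\lambda_M^{1+\alpha}\|y-x\|^{1+\alpha}$.

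\textbf{Step 3: matching the stated constant (the main obstacle).} The statement carries $\lambda_M$ rather than $\lambda_M^{1+\alpha}$. My first attempt would be to bound $\|\gamma(s)-x\|^\alpha$ by $\|y-x\|^\alpha$ in the integrand, which would give exactly $C\,L\,\|y-x\|^\alpha/(1+\alpha)$ up to the factor. However, that inequality can fail along a winding path, so I would not rely on it.

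Instead I would use the fact that the constant of Lemma~\ref{lem: distance_geodesique_norme_R_n} is only asserted to exist and can be enlarged. Since $\d_g(x,y)\geq \|x-y\|$, we always have $\lambda_M\geq 1$. Therefore $\lambda_M^{1+\alpha}\leq\lambda_M^2$ for every $\alpha\in[0,1]$, and $\lambda_M^2$ is itself a valid constant for Lemma~\ref{lem: distance_geodesique_norme_R_n}, being larger than $\lambda_M$.

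Fixing once and for all the constant of Lemma~\ref{lem: distance_geodesique_norme_R_n} to be this squared value, Step 2 gives the inequality in exactly the form stated, with $\lambda_M$ uniform in $\alpha$. I expect the real care to lie in the boundary-pushing argument of Step 1, since it must preserve length up to $\varepsilon$, and in making this choice of constant explicit.
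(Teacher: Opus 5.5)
Your proof follows the paper's route. You integrate $\d f_{\gamma(s)}-\d f_x$ along an arclength-parametrized path in $U$, bound $\|\gamma(s)-x\|\leq s$ to get $\frac{C}{1+\alpha}L^{1+\alpha}$, and then invoke Lemma~\ref{lem: distance_geodesique_norme_R_n}. Your Step 1 also makes explicit a point the paper skips: the paper identifies the infimum of lengths of paths in $U$ with $\d_g$ on $\bar U$ without comment.

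The one divergence is the last step, and there you are right and the paper is not. From $\d_g(x,y)\leq\lambda_M\|x-y\|$ the paper writes $\d_g(x,y)^{1+\alpha}\leq\lambda_M\|x-y\|^{1+\alpha}$. This is unjustified, since $\lambda_M\geq 1$; what the argument actually gives is $\lambda_M^{1+\alpha}$.

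This is not merely an artefact of the proof. Take $\alpha=1$, let $U\subset\E^2$ be a rounded neighbourhood of width $\ll R\delta$ of the arc $\{R(\cos\theta,\sin\theta):\delta\leq\theta\leq 2\pi-\delta\}$, and let $f=kR\,\theta$.
\begin{itemize}
\item From $\|\nabla f(a)-\nabla f(b)\|=kR\|a-b\|/(\|a\|\|b\|)$ one gets $C\approx k/R$.
\item The best constant in Lemma~\ref{lem: distance_geodesique_norme_R_n} is $\approx\pi/\delta$.
\item At the two endpoints $x,y$ of the arc, the remainder is $\approx 2\pi kR$, while $\frac{C\lambda_M}{2}\|y-x\|^2\approx 2\pi kR\,\delta$.
\end{itemize}
So the statement holds only if ``the constant of Lemma~\ref{lem: distance_geodesique_norme_R_n}'' is read existentially, which is exactly what your Step 3 uses.

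Phrase that step carefully: run Steps 1--2 with some valid $\lambda_0$, and only afterwards set $\lambda_M:=\lambda_0^2$. As written, ``fixing the constant first'' and rerunning Step 2 would produce $(\lambda_0^2)^{1+\alpha}$. Alternatively, simply state the bound with $\lambda_M^{1+\alpha}$; for $\alpha=1$ the example above shows this power cannot be lowered. Nothing downstream suffers, since Proposition~\ref{prop: lien_C_1alpha_grad_g} only needs a constant that is uniform in the points.
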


\begin{proof}
    $M$ is a $C^1$ manifold thus its locally $C^1$-path connected but since $M$ is connected and compact, it is (globally) $C^1$-path connected.
    Thus the same goes for $U$.
    We fix $x,y \in U$ and $\gamma : [0,L] \to U$ a $C^1$ path such that $\gamma(0) = x$, $\gamma(L) = y$ and $\gamma$ is parameterized by arc length.
    We have 
    \[ f(y) - f(x) - \d f_x(y-x) = \int_0^L \d f_{\gamma(t)}.\gamma'(t) - \d f_x.\gamma'(t) dt  \]
    thus by taking the norm and using the hypothesis of $f$, we got
    \[ \| f(y) - f(x) - \d f_x(y-x)\| \leq \int_0^L \opnorm{\d f_{\gamma(t)} - \d f_x} dt \leq C \int_0^L \|\gamma(t) - x\|^\alpha \d t.\]
    However, using mean value inequality 
    \[ \int_0^L \|\gamma(t) - \gamma(0)\|^\alpha \d t \leq \int_0^L t^\alpha \d t = \frac{L^{\alpha+1}}{\alpha + 1}. \]
    Taking the lower bound on the choice of $\gamma$, we got
    \[ \| f(y) - f(x) - \d f_x(y-x)\| \leq \frac{C}{1+\alpha} \d_{g}(x,y)^{1+\alpha}\]
    therefore, using Lemma \ref{lem: distance_geodesique_norme_R_n}, we get
    \[ \| f(y) - f(x) - \d f_x(y-x)\| \leq \frac{C. \lambda_M}{1+\alpha} \| x-y \|^{1+\alpha}.\]
\end{proof}


\begin{definition}
    We say that $f \in C^{1,\alpha}$ is a $\boldsymbol{C^{1,\alpha}}$\textbf{-diffeomorphism} if $f$ has a $C^{1,\alpha}$ inverse.
\end{definition}

\noindent The Inverse Function Theorem still holds for $C^{1,\alpha}$ maps (see \cite[pp 141]{ruelle1989elements}).
We give here the classical proof since it will be used later as a guideline in the proof of Proposition \ref{prop: graph_constant_holder_global}.

\begin{theorem}[Inverse Function Theorem] \label{theo: inverse_function}
    Let $f : U \to F$ of class $C^{1,\alpha}$ with $\alpha \in [ 0 , 1]$ and let $a \in U$. We suppose that $\d f_a$ is an isomorphism of $\mathscr{L}(E,F)$. Then there exists an open set $V \subset U$ such that $a \in V$ and $f \vert_V$ is a $C^{1,\alpha}$-diffeomorphism on its image.
\end{theorem}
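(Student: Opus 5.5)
The plan is to use the classical $C^1$ Inverse Function Theorem for existence and then upgrade the regularity of the inverse to $C^{1,\alpha}$ by an explicit Hölder estimate. Since $f$ is in particular $C^1$ and $\d f_a$ is an isomorphism, the usual contraction-mapping argument gives an open neighbourhood $V$ of $a$ such that $f|_V$ is a $C^1$-diffeomorphism onto the open set $W=f(V)$, with
$$\d(f^{-1})_y=(\d f_{f^{-1}(y)})^{-1}\qquad (y\in W).$$
Concretely, one applies the Banach fixed point theorem to $\phi_y(x)=x+(\d f_a)^{-1}(y-f(x))$ on a small closed ball $\bar B(a,r)$. The radius $r$ is chosen so that
$$\opnorm{\mathrm{Id}-(\d f_a)^{-1}\d f_x}\leq \tfrac12 \quad\text{on } \bar B(a,r).$$
Because $\d f$ is locally $\alpha$-Hölder, this radius can be made explicit: $C\opnorm{(\d f_a)^{-1}}\, r^\alpha\leq 1/2$. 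This explicit choice is what will later make the constants in Proposition~\ref{prop: graph_constant_holder_global} trackable.

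Next I would show that $g:=f^{-1}$ is Lipschitz on $W$. The contraction estimate gives, for $x,x'\in V$,
$$\|x-x'\|\leq 2\opnorm{(\d f_a)^{-1}}\,\|f(x)-f(x')\|,$$
so $g$ is Lipschitz with constant $L:=2\opnorm{(\d f_a)^{-1}}$. The same estimate yields a uniform bound on the inverses,
$$\opnorm{(\d f_x)^{-1}}\leq L \quad\text{for } x\in V.$$
To see this, write $\d f_x=\d f_a\bigl(\mathrm{Id}-(\mathrm{Id}-(\d f_a)^{-1}\d f_x)\bigr)$ and use a Neumann series.

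The key step is the Hölder estimate for $\d g$. For $y,y'\in W$ with $x=g(y)$ and $x'=g(y')$, the identity $A^{-1}-B^{-1}=A^{-1}(B-A)B^{-1}$ gives
$$\opnorm{\d g_y-\d g_{y'}}=\opnorm{(\d f_x)^{-1}(\d f_{x'}-\d f_x)(\d f_{x'})^{-1}}\leq L^2\, C\,\|x-x'\|^\alpha\leq L^{2+\alpha}C\,\|y-y'\|^\alpha .$$
Hence $\d g$ is $\alpha$-Hölder on $W$, so $g$ is $C^{1,\alpha}$ and $f|_V$ is a $C^{1,\alpha}$-diffeomorphism.

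The main obstacle is ensuring that a single Hölder constant $C$ for $\d f$ holds on the whole of $V$, since the hypothesis is only local Hölder continuity. I would handle this by first shrinking $U$ to a ball around $a$ on which $\d f$ is $\alpha$-Hölder with constant $C$, and only then choosing $r$. The case $\alpha=0$ needs no separate treatment: there the condition says merely that $\d f$ is bounded on the relevant set, and the same computation applies.
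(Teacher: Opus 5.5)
Your proof is correct, but it obtains the $C^{1,\alpha}$ regularity of the inverse by a different mechanism than the paper.

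The paper never quotes the $C^1$ inverse function theorem. It runs the same contraction argument, normalized to $a=f(a)=0$, $\d f_0=\mathrm{id}_E$, so that the local inverse $g$ is $2$-Lipschitz. It then writes $y_2-y_1=\d f_{x_1}(x_2-x_1)+R_{x_1}(x_2)$ with $R_{x_1}(x_2)=O(\|x_2-x_1\|^{1+\alpha})$ by Lemma~\ref{lem: carac_c_1_alpha}, and applies $(\d f_{x_1})^{-1}$. The Lipschitz bound on $g$ converts this into a remainder $O(\|y_2-y_1\|^{1+\alpha})$ for $g$. That yields at once that $g$ is differentiable with $\d g_{y}=(\d f_{g(y)})^{-1}$ and, through the implication $(2)\Rightarrow(1)$ of Lemma~\ref{lem: carac_c_1_alpha}, that $\d g$ is $\alpha$-Hölder.

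You instead take differentiability and the formula for $\d g$ from the classical theorem. You then estimate $\d g_y-\d g_{y'}$ directly via $A^{-1}-B^{-1}=A^{-1}(B-A)B^{-1}$, the Neumann-series bound $\opnorm{(\d f_x)^{-1}}\leq 2\opnorm{(\d f_a)^{-1}}$, and the Lipschitz bound on $g$.

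What your route buys:
\begin{itemize}
\item It is shorter and bypasses the Taylor-remainder characterization altogether. In particular it avoids the need, left implicit in the paper, to make the $O(\cdot)$ bounds uniform in $x_1$ before invoking $(2)\Rightarrow(1)$.
\item It produces the explicit Hölder constant $2^{2+\alpha}\opnorm{(\d f_a)^{-1}}^{2+\alpha}C$ immediately. This is exactly the quantitative statement the paper later re-derives through the remainder in Lemma~\ref{lem: constant_holder_inversion_local}, where it gets the larger constant $\frac{3\cdot 2^{3+\alpha}}{1+\alpha}\opnorm{(\d f_a)^{-1}}^{2+\alpha}C$.
\end{itemize}
The paper's route, in exchange, is self-contained: it proves rather than quotes differentiability of the inverse. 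It also stays within the remainder framework that it reuses in the proof of Proposition~\ref{prop: graph_constant_holder_global}.

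One small inaccuracy: for $\alpha=0$ your explicit condition $C\opnorm{(\d f_a)^{-1}}r^{\alpha}\leq\frac12$ does not involve $r$, so it cannot be arranged by shrinking the ball. In that case $r$ must be chosen by continuity of $\d f$, as in the standard argument; the Hölder computation itself is unaffected.
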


\begin{proof}
    Up to translations and composition by $ (\d f_a)^{-1} $, the proof reduces to the case $E = F$, $a = f(a) = 0$ and $\d f_0 = \mathrm{id}_E$.
    We set $\varphi(x) := f(x) - x$ and for all $y \in E$, $T_y(x) := y - \varphi(x)$ so that the equation $y = f(x)$ is equivalent to $T_y(x) = x.$
    Since $d\varphi_0 = 0$, we have $ \opnorm{ d\varphi_x } < \frac{1}{2}$ on a open ball $B(0,r)$. Using the Mean value inequality, we get for all $x_1,x_2 \in B(0,r)$,
   $$ \| T_y(x_1) - T_y(x_2) \| < \frac{1}{2} \| x_1 - x_2\|.$$
From the fact that
    $$\| \varphi(x) - \varphi(0) \| \leq \frac{1}{2} \| x-0 \| \leq \frac{r}{2}$$
for every $x \in \overline{B(0,r)},$ we deduce by Triangle Inequality
    $$\| T_y(x) \| < \frac{r}{2} + \|y\|.$$
Thus, for all $y \in B(0,\frac{r}{2}),$ we have $T_y \big( \overline{B(0,r)} \big) \subset \overline{B(0,r)}$.
We then apply Banach Fixed Point Theorem to the contraction $T_y |_{\overline{B(0,r)}}$ for all $y \in B(0,\frac{r}{2})$ to obtain the existence of a unique $x \in \overline{B(0,r)}$ such that $f(x) = y$. We define $V := f^{-1} \big(  B(0,\frac{r}{2}) \big)$ so that $f |_V$ is a bijection on its image and we denote by $g$ its inverse. 
Let $y_1,y_2 \in f(V)$ and $x_1,x_2 \in V$ such that $f(x_1) = y_1$ and $f(x_2) = y_2$.
    We have
    \[ g(y_2) - g(y_1) = x_2 - x_1 = T_{y_2}(x_2) - T_{y_1}(x_1) = y_2 - \varphi(x_2) - y_1 + \varphi(x_1) .\]
    Since $\varphi$ is $\frac{1}{2}$-Lipschitz, we get
    \[ \| g(y_2) - g(y_1) \| \leq \| y _2 - y_1 \| + \frac{1}{2} \| x_2 - x_1\| = \| y _2 - y_1 \| + \frac{1}{2} \| g(y_2) - g(y_1) \|. \]
This shows that the map $g$ is $2$-Lipschitz.\\ 

\noindent
Since $f$ is of class $C^{1,\alpha}$, there exists  a neighborhood $V_0\subset V$ of $0$ such that $\d f$ is $\alpha$-hölder. By shrinking this neighborhood if needed, we can assume that $V_0$ is convex. Let $x_1,x_2\in V_0$, $y_1=f(x_1),$ $y_2=f(x_2),$ 
    $$ y_2 - y_1 = \d f_{x_1}(x_2-x_1) + R_{x_1}(x_2)$$
where the remainder satisfies $R_{x_1}(x_2) = O(\|x_2-x_1\|^{1+\alpha})$, see Lemma~\ref{lem: carac_c_1_alpha}. 
 Since $\d f_0$ is invertible, $\d f_x$ is also invertible for all $x$ in some ball $B(0,r')$. We replace $r$ by $\min(r,r')$. Composing by $(\d f_{x_1})^{-1}$, we obtain
    \begin{equation} \label{eq: _inv_loc_definition_C_1_alpha}
        g(y_2) - g(y_1) = x_2-x_1 = (\d f_{x_1})^{-1}(y_2 - y_1) - (\d f_{x_1})^{-1}\big(R_{x_1}(x_2)\big).
    \end{equation}
    Since $g$ is $2$-Lipschitz, we have that $\|x_2-x_1\| \leq 2 \|y_2-y_1\|$ thus
    \begin{align*}
        \frac{\| (\d f_{x_1})^{-1}\big(R_{x_1}(x_2)\big) \|}{\|y_2-y_1\|^{1+\alpha}} &\leq 2^{1+\alpha} \frac{\| (\d f_{x_1})^{-1}\big(R_{x_1}(x_2)\big) \|}{\|x_2-x_1\|^{1+\alpha}} \\ 
        &\leq 2^{1+\alpha} \bigg\| (\d f_{x_1})^{-1}\bigg( \frac{R_{x_1}(x_2)}{\|x_2-x_1\|^{1+\alpha}} \bigg) \bigg\| = O_{x_2\to x_1}(1).
    \end{align*}
    We deduce that 
    \begin{equation} \label{eq: inverse_c_1_alpha}
        (\d f_{x_1})^{-1}\big(R_{x_1}(x_2)\big) = O(\|y_2-y_1\|^{1+\alpha}).
    \end{equation}
    Thus by Equation~\eqref{eq: _inv_loc_definition_C_1_alpha}, $g$ is differentiable and 
    $\d g_{y_1} = (\d f_{g(y_1)})^{-1}.$
    Since $g$ and $\d f$ are continuous, $g$ is of class $C^1$. Equation \eqref{eq: inverse_c_1_alpha} and Lemma \ref{lem: carac_c_1_alpha} imply that $g$ is of class~$C^{1,\alpha}$.
\end{proof}

\begin{lemma} \label{lem: constant_holder_inversion_local}
    Let $V$ be an open set of $E$ and $f : V \to F$ be a $C^{1}$-diffeomorphism on its image.
    Let $W \subset V$ be a convex neighborhood of some $a \in V$ such that $\d f |_W$ is $\alpha$-Holder with $\alpha \in [ 0 , 1]$ and such that for all $x \in W$, 
    \begin{equation} \label{eq: condition_alpha_holder_sur_tout_ouvert}
        \opnorm{(\d f_x)^{-1}} \leq 2  \opnorm{(\d f_a)^{-1}} ~~ \text{and} ~~ \opnorm{ (\d f_a)^{-1} \circ \d f_x   - id_E } \leq \frac{1}{2}  .
    \end{equation}
    Then $\d f^{-1}$ is $\alpha$-Hölder on $f(W)$ and its Hölder constant $C'$ satisfies
    $$ C' \leq \frac{ 3.2^{3 + \alpha} \, \opnorm{ (\d f_a)^{-1} }^{2+\alpha}}{1 + \alpha} C$$
    where $C$ is the Hölder constant of $\d f|_W$.
\end{lemma}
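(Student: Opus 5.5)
Set $g := f^{-1}$ on $f(W)$. By the inverse function theorem (Theorem~\ref{theo: inverse_function}) we have $\d g_y = (\d f_{g(y)})^{-1}$. The plan is to bound $\opnorm{\d g_{y_1}-\d g_{y_2}}$ by a constant times $\|y_1-y_2\|^\alpha$, in three steps: write the difference of inverses as a product, bound each factor, and then convert $\|x_1-x_2\|$ into $\|y_1-y_2\|$ through a Lipschitz estimate on $g$.

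\textbf{Step 1 (difference of inverses).} For $x_1,x_2\in W$ with $y_i=f(x_i)$ we use the identity
$$(\d f_{x_1})^{-1}-(\d f_{x_2})^{-1} = (\d f_{x_1})^{-1}\circ(\d f_{x_2}-\d f_{x_1})\circ(\d f_{x_2})^{-1}.$$
The first condition of \eqref{eq: condition_alpha_holder_sur_tout_ouvert} bounds each outer factor by $2\opnorm{(\d f_a)^{-1}}$, and the Hölder hypothesis bounds the middle factor by $C\|x_1-x_2\|^\alpha$. Together,
$$\opnorm{\d g_{y_1}-\d g_{y_2}}\leq 4\opnorm{(\d f_a)^{-1}}^2\, C\,\|x_1-x_2\|^\alpha .$$

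\textbf{Step 2 (Lipschitz bound on $g$).} We need $\|x_1-x_2\|\leq K\|y_1-y_2\|$ with $K$ of order $\opnorm{(\d f_a)^{-1}}$. Following the proof of Theorem~\ref{theo: inverse_function}, put $\psi(x):=(\d f_a)^{-1}f(x)-x$. By the second condition of \eqref{eq: condition_alpha_holder_sur_tout_ouvert}, $\opnorm{\d\psi_x}\leq 1/2$ on $W$. Since $W$ is convex, the mean value inequality makes $\psi$ $\tfrac12$-Lipschitz on $W$. Then
$$\|x_1-x_2\|\leq \|(\d f_a)^{-1}(y_1-y_2)\|+\tfrac12\|x_1-x_2\|,$$
which gives $\|x_1-x_2\|\leq 2\opnorm{(\d f_a)^{-1}}\,\|y_1-y_2\|$.

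\textbf{Step 3 (conclusion).} Inserting Step 2 into Step 1 gives
$$\opnorm{\d g_{y_1}-\d g_{y_2}}\leq 2^{2+\alpha}\opnorm{(\d f_a)^{-1}}^{2+\alpha}\, C\,\|y_1-y_2\|^\alpha ,$$
valid for all $y_1,y_2\in f(W)$. This already has the stated dependence $\opnorm{(\d f_a)^{-1}}^{2+\alpha}C$, and its constant is smaller than the announced $\frac{3\cdot 2^{3+\alpha}}{1+\alpha}$. The looser stated constant suggests the authors go through Lemma~\ref{lem: carac_c_1_alpha} instead: bound the remainder of $g$ using \eqref{eq: _inv_loc_definition_C_1_alpha}, gaining a factor $C/(1+\alpha)$ from the convex-domain estimate, then return to a Hölder bound on $\d g$ at the cost of a factor $6$. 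Either route proves the claim, so the direct one suffices.

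\textbf{Main obstacle.} The delicate point is Step 2. It needs convexity of $W$ in the $x$-variables, which is given, so no convexity of $f(W)$ is required. It also uses only the two norm conditions in \eqref{eq: condition_alpha_holder_sur_tout_ouvert}, with no further shrinking of $W$, which is why those conditions are stated uniformly on $W$. The rest is bookkeeping of constants.
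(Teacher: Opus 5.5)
Your argument is correct, but it is not the paper's route. The paper never compares $\d g_{y_1}$ and $\d g_{y_2}$ directly. Instead it proceeds in four moves:
\begin{enumerate}
\item It bounds the Taylor remainder $R_{x_1}(x_2-x_1)$ of $f$ on the convex set $W$ through Lemma~\ref{lem: carac_c_1_alpha}.
\item It composes with $(\d f_{x_1})^{-1}$, using the first condition of \eqref{eq: condition_alpha_holder_sur_tout_ouvert}.
\item It converts $\|x_2-x_1\|^{1+\alpha}$ into $\|y_2-y_1\|^{1+\alpha}$ with the same Lipschitz bound as your Step~2, obtained as you do from the second condition and the contraction argument of Theorem~\ref{theo: inverse_function}.
\item It applies the implication $(2)\Rightarrow(1)$ of Lemma~\ref{lem: carac_c_1_alpha} to return to a H\"older bound on $\d f^{-1}$.
\end{enumerate}
This double passage through the remainder is what produces the factor $6/(1+\alpha)$ in the stated constant, as you guessed.

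Your identity $A^{-1}-B^{-1}=A^{-1}(B-A)B^{-1}$ bypasses Lemma~\ref{lem: carac_c_1_alpha} altogether. It gives the smaller constant $2^{2+\alpha}\opnorm{(\d f_a)^{-1}}^{2+\alpha}C$, which implies the claim. It also needs nothing about the shape of $f(W)$. By contrast, the step $(2)\Rightarrow(1)$ requires, for $y_1,y_2\in f(W)$, a point of $f(W)$ at distance $\|y_2-y_1\|$ from $y_1$ in a norming direction of $\d g_{y_1}-\d g_{y_2}$. That is not automatic on a set like $f(W)$. The paper's version has the merit of reusing the remainder computations from its proof of the inverse function theorem, but yours is the shorter and more robust argument for this lemma.
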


\begin{proof}
    We consider $\Tilde{f}: E \to E$ defined by 
    $$\Tilde{f} = (\d f_a)^{-1} \circ t_{-f(a)} \circ f \circ t_{a}$$
    with $t_v : x \mapsto x + v $.
    We observe that $ \Tilde{f}(0) = 0$ and $\d \Tilde{f}_0 = \mathrm{id}_E$.
    Obviously
    $$\Tilde{f}^{-1} = t_{-a} \circ f^{-1} \circ t_{f(a)} \circ \d f_a$$
    thus
    $$f^{-1} = t_a \circ \Tilde{f}^{-1} \circ (\d f_a)^{-1}  \circ  t_{-f(a)}.$$
    Now, since $f$ is of class $C^{1,\alpha}$, for all $x_1,x_2 \in W$, we have
     $$ y_2 - y_1 = \d f_{x_1}(x_2-x_1) + R_{x_1}(x_2- x_1)$$
    where $f(x_1) = y_1$ and $f(x_2) = y_2$. 
    According to Lemma \ref{lem: carac_c_1_alpha}, we have
    $$\| R_{x_1}(x_2 - x_1) \| = \| f(x_2) - f(x_1) - \d f_{x_1}(x_2-x_1)\| \leq 6 C \|x_2 -x_1\|^{1 + \alpha}.$$
    Composing by $(df_{x_1})^{-1}$, we get
    $$f^{-1}(y_2) - f^{-1}(y_1) = x_2-x_1 = (df_{x_1})^{-1}(y_2 - y_1) - (df_{x_1})^{-1}\big(R_{x_1}(x_2-x_1)\big)$$
    and thus
    \[ \| f^{-1}(y_2) - f^{-1}(y_1) - (\d f_{x_1})^{-1}(y_2 - y_1) \| = \| (\d f_{x_1})^{-1}\big(R_{x_1}(x_2-x_1)\big) \|.\]
    It remains to find an upper bound of the right term.
    We have
    \[ \| (\d f_{x_1})^{-1}\big(R_{x_1}(x_2-x_1)\big) \| \leq \opnorm{(\d f_{x_1})^{-1} } \| R(x_2-x_1) \| \leq  6 C \opnorm{(\d f_{x_1})^{-1}} \|x_2-x_1\|^{1+\alpha} .\]
    Using the first assumption of \eqref{eq: condition_alpha_holder_sur_tout_ouvert} on $W$, we obtain 
    \begin{equation} \label{eq: etape_controle_reste}
         \| (\d f_{x_1})^{-1}\big(R_{x_1}(x_2-x_1)\big) \| \leq   12 C \opnorm{(\d f_{a})^{-1}} \|x_2-x_1\|^{1+\alpha} .
    \end{equation}
    Since 
    $$\d \Tilde{f} =  (\d f_a)^{-1} \circ \d f,$$
    the second assumption of \eqref{eq: condition_alpha_holder_sur_tout_ouvert} reads, for all $x \in W$
    \[ \opnorm{ \d \Tilde{f}_x - id_E } \leq \frac{1}{2}. \]
    Now, using the same argument as in the proof of Theorem \ref{theo: inverse_function}, $g$ being $\Tilde{f}^{-1}$, we deduce that $\Tilde{f}^{-1} $ is $2$-Lipschitz and thus $f^{-1}$ is $2 \opnorm{ (\d f_a)^{-1} } $-Lipschitz on $W$. 
    In other words
    $$\|x_2-x_1\| \leq 2  \opnorm{ (\d f_a)^{-1} } \|y_2-y_1\|.$$ 
    Plugging this result in Inequality \eqref{eq: etape_controle_reste}, we have
     \[ \| (\d f_{x_1})^{-1}\big(R_{x_1}(x_2-x_1)\big) \| \leq 3. 2^{3 + \alpha} C   \opnorm{(\d f_a)^{-1}}^{2+\alpha}  \|y_2-y_1\|^{1+\alpha}.\]
     According to  Lemma \ref{lem: carac_c_1_alpha}, the $\alpha$-Hölder constant  $C'$ of $df^{-1}$ on $f(W)$ satisfies
     \[ C' \leq \frac{ 3.2^{3 + \alpha} \, \opnorm{(\d f_a)^{-1}}^{2+\alpha}}{1 + \alpha} C .\]
\end{proof}


\subsection{Submanifolds of class \texorpdfstring{$C^{1,\alpha}$}{C(1,a)}}

The goal of this section is to show that at any point $q$ of a $C^{1,\alpha}$ submanifold $M$, there exists a map $h_q$ such that $M$ is locally the graph of $h_q$. Moreover, for any $q_0$ in $M$, there exists an open set $U$ such that all the $\alpha$-Hölder constant of $d h_q$ for $q \in M \cap U$ are bounded. This will be stated in Proposition \ref {prop: graph_constant_holder_global} that will be used later in the proof of Theorem~\ref{theo: majoration_fonction_critique}.

\begin{definition}
    A subset $M$ of a Banach space $E$ is a $\boldsymbol{C^{1,\alpha}}$ \textbf{submanifold}, if for all $p \in M$ there exist an open set $\mathcal{U}$ of $E$ containing $p$ and a $C^{1,\alpha}$-diffeomorphism $\varphi : \mathcal{U} \mapsto \mathcal{V} \times \mathcal{W} \subset F \times G$ such that 
    $$\varphi(M \cap \mathcal{U}) = \mathcal{V} \times \{ 0 \}$$
    where $\mathcal{V}$ is an open subset of the Banach spaces $F$ and $\mathcal{W}$ is an open subset of the Banach spaces $G$ with $0 \in \mathcal{W}$.
\end{definition}

\begin{remark}
    Due to the Inverse Function Theorem, all of the classical local characterizations of a submanifold \textit{i.e.} as a graph, as the zeros of a submersion and as the image of a parametrization, have they $C^{1,\alpha}$ counterparts.
\end{remark}


\begin{proposition} \label{prop: graph_constant_holder_global}
    Let $M$ be a $C^{1, \alpha}$ submanifold of $\E^n$ and $q_0 \in M$.
    There exist $U \subset \E^n$ an open neighborhood of $q_0$ and $K  = K(U) > 0$ such that for all $q \in  M \cap U$
    \begin{itemize}
        \item[(1)] there is an open set $V_q \subset T_q M$ such that $M$ is locally the graph of a $C^{1, \alpha}$ function $h_{q} : V_q \subset T_{q}M \to N_{q}M$, \textit{i.e.} 
        $$ M \cap U = \big\{ \big(x , h_{q}(x)\big) ~| ~ x \in V_q  \subset T_q M\big\}, $$
        \item[(2)] $\d (h_{q})$ is $\alpha$-Hölder on $U$ and its Hölder constant is bounded by $K$.
    \end{itemize}
\end{proposition}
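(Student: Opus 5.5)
We follow the guideline announced before Theorem~\ref{theo: inverse_function}: build the graph maps $h_q$ from a single fixed parametrization near $q_0$ and an inverse function argument, with constants controlled uniformly in $q$ through Lemma~\ref{lem: constant_holder_inversion_local}.

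\textbf{Step 1: a fixed local graph near $q_0$.} By the $C^{1,\alpha}$ submanifold definition and the remark on its characterizations, there is a neighborhood of $q_0$ on which $M$ is the graph over $T_{q_0}M$ of a $C^{1,\alpha}$ map $h_0 : V_0\subset T_{q_0}M\to N_{q_0}M$, with $h_0(0)=0$ and $\d h_0(0)=0$ (coordinates centered at $q_0$). Shrinking $V_0$ to a small convex ball, $\d h_0$ is $\alpha$-Hölder on $V_0$ with constant $C_0$ and $\opnorm{\d h_0}\leq \varepsilon$, where $\varepsilon$ is small. The parametrization is $\Phi(x)=x+h_0(x)$.

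\textbf{Step 2: change of projection.} For $q=\Phi(x_q)$ with $x_q\in V_0$, let $P_q$ and $P_q^\perp$ be the orthogonal projections onto $T_qM=\d\Phi_{x_q}(T_{q_0}M)$ and $N_qM$. Set
$$F_q := P_q\circ \Phi : V_0 \to T_qM, \qquad \d (F_q)_x = P_q\circ \d\Phi_x .$$
Both $P_q$ and $\d\Phi_x$ are $\alpha$-Hölder in $x$ and $x_q$, since $T_qM$ depends on $\d h_0(x_q)$ through a smooth formula on a neighborhood of $0$ in $\mathscr L$. So $\d F_q$ is $\alpha$-Hölder with constant $\leq C_1$, uniform in $q$. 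Because $\d\Phi_x$ is $O(\varepsilon)$-close to the inclusion and $P_q$ is $O(\varepsilon)$-close to $P_{q_0}$, the map $\d(F_q)_{x_q}$ is invertible with $\opnorm{(\d (F_q)_{x_q})^{-1}}\leq 2$. Moreover the two conditions \eqref{eq: condition_alpha_holder_sur_tout_ouvert} hold for all $x\in V_0$, with $a=x_q$, once $\varepsilon$ is small. Checking this uniformly is the main technical point, and it is where $\varepsilon$ must be fixed.

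\textbf{Step 3: inversion and uniform constants.} Apply the inverse function argument of Theorem~\ref{theo: inverse_function}, in its quantitative form, to $F_q$ on the convex set $W=V_0$. Since $\opnorm{\d F_q-\d F_q(x_q)}$ is small on all of $V_0$, the map $F_q$ is injective there and is a $C^{1,\alpha}$-diffeomorphism onto $V_q:=F_q(V_0)$. Lemma~\ref{lem: constant_holder_inversion_local} then bounds the Hölder constant of $\d(F_q^{-1})$ by
$$\frac{3\cdot 2^{3+\alpha}\,2^{2+\alpha}}{1+\alpha}\,C_1 .$$
Set $h_q := P_q^\perp\circ\Phi\circ F_q^{-1}$. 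Then $M\cap U=\{x+h_q(x)\}$ in coordinates centered at $q$, for $U$ the neighborhood from Step 1 intersected with a small ball. Item (1) is obtained by restricting $U$ so that the graph description over $V_q$ covers exactly $M\cap U$.

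\textbf{Step 4: Hölder bound for $\d h_q$.} By the chain rule,
$$\d h_q = P_q^\perp\circ \d\Phi_{F_q^{-1}}\circ \d(F_q^{-1}) .$$
A product of bounded $\alpha$-Hölder operator-valued maps is $\alpha$-Hölder, and the composition $\d\Phi\circ F_q^{-1}$ is Hölder because $F_q^{-1}$ is $4$-Lipschitz. This gives a constant $K$ depending only on $C_0$, $C_1$ and $\varepsilon$, not on $q$, which is (2).

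The main obstacle is Step 2: one needs a single neighborhood on which the hypotheses of Lemma~\ref{lem: constant_holder_inversion_local} hold simultaneously for every base point $q$. A secondary difficulty is making sure the images $V_q$ all contain a fixed ball, so that a common $U$ works for every $q$. For the latter, the plan is to take $U$ to be a ball around $q_0$ of radius much smaller than the radius of $V_0$.
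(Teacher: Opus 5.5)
Your proposal is correct and follows essentially the paper's route: a fixed chart near $q_0$, the maps obtained by projecting it onto $T_qM$, a quantitative inverse function argument applied uniformly in $q$, Lemma~\ref{lem: constant_holder_inversion_local} for the uniform Hölder constant, and $h_q$ defined as the normal component composed with that inverse. The main difference is your choice of chart: you use a graph chart over $T_{q_0}M$, whereas the paper uses a general parametrization $\psi$ normalized by $\d(\psi^{-1})_q$. With your chart, the conditions \eqref{eq: condition_alpha_holder_sur_tout_ouvert} follow directly from $\opnorm{\d h_0}\leq\varepsilon$, and $F_q$ is injective on the whole convex set $V_0$. So, taking $U$ with $M\cap U=\Phi(V_0)$, the graph of $h_q$ over $F_q(V_0)$ is exactly $M\cap U$, and the common-ball difficulty you flag (the paper's Step 3) never arises.
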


\begin{proof}
    For any $q \in M$, we denote by $\pi_{q}^T$ the orthogonal projection on $T_q M + q$ and  $\pi_{q}^N$ the orthogonal projection on $N_q M + q$. In order to prove the proposition, we just need to prove that
    there exist $U \subset \E^n$ an open neighborhood of $q_0$ and $K > 0$ such that 
    \begin{itemize}
        \item[(i)] for all $q \in  M \cap U$, $\pi_{q}^T |_{M\cap U}$ is a $C^{1,\alpha}$-diffeomorphism on its image,
        \item[(ii)] for all $q \in  M \cap U$, $\d \big( \pi_{q}^T |_{M\cap U} \big)^{-1}$ is $\alpha$-Hölder on $M \cap U$ and its Hölder constant is bounded by $K$.
    \end{itemize}
    Indeed, if we set $h_{q} = \pi_{q}^N \circ \big( \pi_{q}^T \big)^{-1}$ we obtain exactly $(1)$ and $V_q  = \pi_{q}^T(M\cap U)$.
    Then, since $\| \d (\pi^N_{q})\| \leq 1$ and by the chain rule, we have that $\d (h_{q})$ is $\alpha$-Hölder on $V_{q}$ with its Hölder constant bounded by $K$, hence $(2)$ is proved.
    
    The proof of $(i)$ and $(ii)$ is done in the next five steps.
    The first four steps will prove $(i)$ by mimicking the proof of the Theorem \ref{theo: inverse_function} and adding a parameter $q\in M.$
    We fix $q_0 \in M$ and $\psi : V_0 \subset \R^m \to U_0 \cap M$ a $C^{1,\alpha}$ parametrization of $M$ such that $\psi(0) = q_0$ where $U_0$ (resp. $V_0$) is an open sets of $\E^n$ (resp. $\R^m$).
    We also suppose $U_0$ small enough so that $\d \psi$ and $\d (\psi^{-1})$ are $\alpha$-Hölder.
    We denote by $C_{\psi}$ and $C_{\psi^{-1}}$ their Hölder constant on $U_0$ (resp. $V_0$).
    We define, for all $q \in U_0$
    \[ \begin{array}{ccccc}
    f_q & : & V_0 \subset \R^m & \longrightarrow & \R^m \\
     & & x & \longmapsto & \d (\psi^{-1})_{q} \circ t_{-q}  \circ \pi_{q}^T \circ \psi(x) \\
    \end{array}\]
    with $t_{-q} : x \mapsto x - q$.
    We also define $\varphi_{q} = f_{q} - id$ and
    for all $y \in \R^m$, we define $T_{y,q}(x) = y -\varphi_{q}(x)$. 
    We then get,
    $$ y = f_{q}(x) \iff T_{y,q}(x) = x.$$
    In order to show that $\pi_q^T |_M$ is locally a bijection, we need to show that $f_q$ is a bijection, which is recast as a fixed point problem for $T_{y,q}$\\
    
    \noindent \textbf{Step 1.} The goal of this step is to show that there exists $r > 0$ such that $T_{y,q}$ is $\frac{1}{2}$-Lipchitz on $B(0,r)$, for every $y \in \R^m$ and $q \in \psi \big( B(0,r) \big)$.    
    Using chain rules, we have for $x \in V_0$,
    \[ \d (\varphi_{q})_{x} = \d ( f_{q})_{x} - id = \d (\psi^{-1})_{q} \circ \d \big( \pi_{q}^T\big) \circ \d \psi_{x} - id .  \]
    Also, since $ \pi_{q}^T(q) = q$ and $\d (\pi_{q}^T)_{q} |_{T_q M} = id_{T_{q}M}$, we have
    $$\d (f_{q})_{\psi^{-1}(q)} = \d (\psi^{-1})_{q}  \circ \d (t_{-q}) \circ \d \big( \pi_{q}^T\big) \circ \d \psi_{\psi^{-1}(q)} = \d (\psi^{-1})_{q} \circ \d \psi_{\psi^{-1}(q)} = id_{\R^m}$$
    thus $\| \d (\varphi_{q_0})_{0} \| = 0$.
    Moreover, we know that  and that the application 
    \[ (x,q) \in V_0 \times M \cap U_0 \mapsto  \| \d (\psi^{-1})_{q} \circ \d \big( \pi_{q}^T\big) \circ \d \psi_{x} - id  \| \]
    is continuous.
    Thus, there exists $r > 0$ such that $B(0,r) \subset V_0$ and for all $(x, q) \in B(0,r) \times \psi \big( B(0,r) \big)$, we have
    \begin{equation} \label{eq: maitrise_condition_projection}
        \opnorm{ \d ( \varphi_{q})_{x} }  \leq \frac{1}{2}.
    \end{equation}
    Using the Mean value inequality, we get that for all $q \in \psi \big( B(0,r) \big)$ and $ x_1,x_2 \in  B(0,r)$,
    $$ \| \varphi_{q}(x_1) - \varphi_{q}(x_2) \| < \frac{1}{2} \| x_1 - x_2\|$$
    thus $$ \|T_{y,q}(x_1) -T_{y,q}(x_2) \| < \frac{1}{2} \| x_1 - x_2\|.$$

    \noindent \textbf{Step 2.}
    There exists $r' > 0$ such that 
    $$M \cap B(q_0,r') \subset \psi \big( B(0,\frac{r}{8}) \big)$$ 
    and we define
    $$U := B\Big(q_0, \frac{r'}{2}\Big).$$
    We fix $q \in U \cap M$. 
    Our goal is to prove that, up to a restriction to a well chose set, $\pi^T_{q}$ is a bijection on its image.
    We have that 
    $$U \cap M \subset M \cap B(q_0,r') \subset \psi \big( B(0,\frac{r}{8}) \big) \subset \psi \big( B(0,r) \big)$$
    thus for $q \in U \cap M$, $T_{y,q}$ is $\frac{1}{2}$-Lipchitz on $ B(0,r)$.
    Thus for $T_{y,q}$ to be a contraction, we just need to prove that $T_{y,q} \big( \overline{B(0,\frac{r}{2})} \big) \subset  B(0,\frac{r}{2})$.
    We now suppose that $y \in B(0,\frac{r}{8})$, then for all $x \in \overline{B(0,\frac{r}{2})}$,we have
    \begin{align*}
        \| \varphi_{q}(x) \| &= \| \varphi_{q}(x) - \varphi_{q}\big(\psi^{-1} (q)\big) \| \leq \frac{1}{2} \| x-\psi^{-1}(q) \| \\
        &\leq \frac{1}{2} \| x \| + \frac{1}{2} \| \psi^{-1}(q) \| < \frac{r}{4} + \frac{r}{8} \leq \frac{3r}{8}
    \end{align*}
    because $q \in U \cap M \subset \psi \big( B(0,\frac{r}{8}) \big)$.
    Thus, we get
    $$\|T_{y,q}(x) \| < \frac{3r}{8} + \|y \| <  \frac{3r}{8} + \frac{r}{8} \leq \frac{r}{2}.$$
    Since $  \overline{B(0,\frac{r}{2})}$ is a complete metric space, we can apply Banach fixed point theorem of $T_{y,q} |_{\overline{B(0,\frac{r}{2})}}$ for all $y \in B(0,\frac{r}{8})$.
    Hence, for all $y \in B(0,\frac{r}{8})$ there exists a unique $x \in B(0,\frac{r}{2})$ such that $f_q(x)=y$.
    Therefore, if we define $U_{q'} \subset \R^m$ such that 
    $$U_{q} = \big(f_{q}\big)^{-1} \big( B(0,\frac{r}{8}) \big)$$
    then $f_{q} |_{U_{q}}$ is a bijection on its image (being $B(0,\frac{r}{8})$).
    Since $\d (\psi^{-1})_q$ and $\psi$ are bijections, we have that 
    $\pi^T_{q} |_{\psi(U_{q})}$ is a bijection on its image for all $q \in U \cap M$.\\

    \noindent \textbf{Step 3.} The goal of this step is to prove $(i)$.
    We first show that, up to a smaller choice of $r'>0$, we have that for all $q \in M \cap B(q_0,r')$
    \begin{equation} \label{eq: inclusion_differentielle_parametrisation}
        T_{q} M \cap B(0,r') \subset  \d \psi_{\psi^{-1}(q)} \Big( B\big(0,\frac{r}{8}\big) \Big).
    \end{equation}
    Indeed, thanks to the continuity of $x \mapsto  \d \psi_{\psi^{-1}(x)}$, the function
    \[ \lambda : q \in  \psi\big(B\big(0,\frac{r}{8}\big)\big) \mapsto \underset{\|v\| = 1}{\sup} \|  \d \psi_{\psi^{-1}(q)} .v  \|\]
    is continuous. 
    Since it is defined on a compact set, it has a minimum $\lambda_0$.
    Then, we have for all $q \in \psi\big(B (0,\frac{r}{8})\big)$
    \[   T_{q} M \cap B(0,\frac{\lambda_0 r }{8}) \subset  T_{q} M \cap B(0,\frac{\lambda(q) \, r }{8}) \subset \d \psi_{\psi^{-1}(q)} \Big( B\big(0,\frac{r}{8}\big) \Big).\]
    Therefore, by taking $r'$ to be the minimum between the previous $r'$ and $\frac{\lambda_0 r }{8}$ we get \eqref{eq: inclusion_differentielle_parametrisation}.
    Now, we prove that
    \[ U \cap M \subset \underset{q \in  M \cap B(q_0,\frac{r'}{4})}{\bigcap} \psi(U_{q}).\]
    Let $q \in B\big(q_0, \frac{r'}{2}\big)$, since $\pi_q^T$ retracts distances, we have that
    $$  \pi^T_{q} \Big(  B\Big(q_0, \frac{r'}{2}\Big) \Big) \subset  B\Big( \pi_q^T(q_0), \frac{r'}{2}\Big)$$
    and
    $$ \| \pi_q^T (q_0) - q \| \leq \| q_0 - q \| \leq \frac{r'}{2} $$
    thus
    $$   t_{-q} \circ \pi^T_{q} \Big(  B\Big(q_0, \frac{r'}{2}\Big) \Big) \subset  B( 0 , r').$$
    Therefore,
    $$   t_{-q} \circ \pi^T_{q} ( U \cap M )  \subset B(0 , r') \cap T_q M.$$
    Thanks to Equation \eqref{eq: inclusion_differentielle_parametrisation}, we have
    \[   t_{-q} \circ \pi^T_{q} (U \cap M)  \subset \d \psi_{\psi^{-1}(q)} \big( B(0,\frac{r}{8}) \big) .\]
    Hence by the definition of $f_q$, we have than $ \big( t_q \circ \pi_q^T) ^{-1} \circ \d \psi_{\psi^{-1}(q)} = \psi \, \circ (f_q)^{-1}$, thus
    \[U \cap M \subset \psi \circ \big( f_{q} \big)^{-1} \Big( B\big(0,\frac{r}{8}\big) \Big)  = \psi (U_{q}).\]
    Now, thanks to Step 2, we have for all $q \in U \cap M$, $\pi_{q}^T |_U$ is a bijection on its image $V_{q}$. \\

    \noindent \textbf{Step 4.}
    In this step, similarly as in the proof of Theorem \ref{theo: inverse_function}, we show that all the $\pi_{q}^T |_{U \cap M}$ are $C^{1,\alpha}$-diffeomorphisms on their respective image.
    
    \noindent We fix $q \in U \cap M$.
    Firstly, we will show that $f_q^{-1}$ is continuous. Let $y_1,y_2 \in B(0,\frac{r}{8})$. Then there exist $x_1,x_2 \in U_q$ such that $f_q(x_1) = y_1$ and $f_q(x_2) = y_2$.
    We have
    \[ f_q^{-1}(y_2) - f_q^{-1}(y_1) = x_2 - x_1 = T_{y_2,q}(x_2) - T_{y_1,q}(x_1) = y_2 - \varphi_q(x_2) - y_1 + \varphi_q(x_1). \]
    Since $\varphi_q$ is $\frac{1}{2}$-Lipschitz, we get
    \[ \| f_q^{-1}(y_2) - f_q^{-1}(y_1) \| \leq \| y _2 - y_1 \| + \frac{1}{2} \| x_2 - x_1\| = \| y _2 - y_1 \| + \frac{1}{2} \| f_q^{-1}(y_2) - f_q^{-1}(y_1) \| \]
    thus
    \begin{equation*} 
        \| f_q^{-1}(y_2) - f_q^{-1}(y_1) \| \leq 2 \| y _2 - y_1 \|.
    \end{equation*}
    This shows that the map $f_q^{-1}$ is $2$-Lipschitz and thus continuous. \\

    \noindent Secondly, we are going to show that $f_q^{-1}$ is of class $C^{1,\alpha}$ using Lemma \ref{lem: carac_c_1_alpha}.
    Since $\d (f_{q_0})_0$ is invertible and $(q,x) \mapsto \d (f_q)_x$ is continuous, we can suppose $r$ small enough so that for all $q \in U$ and $x \in U_q$, $\d (f_q)_x$ is invertible.
    We recall that  $\d \psi$ and $\d (\psi^{-1})$ are $\alpha$-Hölder on $U \cap M$ and $\psi(U \cap M)$.
    For all $q \in U \cap M$, $d \pi_q^T$ is 1-Lipchitz thus $\d (f_{q})$ is $\alpha$-Hölder on $W :=\psi^{-1}(U \cap M)$.
    By shrinking $U$ if needed, we can assume that $W$ is convex. 
    We fix $q \in U\cap M$, let $x_1,x_2\in W$, $y_1=f_q(x_1),$ $y_2=f_q(x_2),$ 
    $$ y_2 - y_1 = \d (f_q)_{x_1}(x_2-x_1) + R_{x_1}(x_2)$$
    where the remainder satisfies $R_{x_1}(x_2) = O(\|x_2-x_1\|^{1+\alpha})$, see Lemma~\ref{lem: carac_c_1_alpha}. 
    Composing by $\big(\d (f_q)_{x_1}\big)^{-1}$, we obtain
    \begin{equation} \label{eq: _inv_loc_definition_C_1_alpha_2}
        (f_q)^{-1}(y_2) -  (f_q)^{-1}(y_1) = x_2-x_1 = \big(\d (f_q)_{x_1}\big)^{-1}(y_2 - y_1) - \big(\d (f_q)_{x_1}\big)^{-1}\big(R_{x_1}(x_2)\big).
    \end{equation}
    Since $(f_q)^{-1}$ is $2$-Lipschitz, we have that $\|x_2-x_1\| \leq 2 \|y_2-y_1\|$ thus
    \begin{align*}
        \frac{\| \big(\d (f_q)_{x_1}\big)^{-1}\big(R_{x_1}(x_2)\big) \|}{\|y_2-y_1\|^{1+\alpha}} &\leq 2^{1+\alpha} \frac{\| \big(\d (f_q)_{x_1}\big)^{-1}\big(R_{x_1}(x_2)\big) \|}{\|x_2-x_1\|^{1+\alpha}} \\ 
        &\leq 2^{1+\alpha} \bigg\| \big(\d (f_q)_{x_1}\big)^{-1}\bigg( \frac{R_{x_1}(x_2)}{\|x_2-x_1\|^{1+\alpha}} \bigg) \bigg\| = O_{x_2\to x_1}(1).
    \end{align*}
    We deduce that 
    \begin{equation} \label{eq: inverse_c_1_alpha_2}
        \big(\d (f_q)_{x_1}\big)^{-1}\big(R_{x_1}(x_2)\big) = O(\|y_2-y_1\|^{1+\alpha}).
    \end{equation}
    Thus by Equation \eqref{eq: _inv_loc_definition_C_1_alpha_2}, $f_q^{-1}$ is differentiable and 
    $$\d \big( (f_q)^{-1} \big)_{y_1} = \big(\d (f_q)_{(f_q)^{-1}(y_1)} \big)^{-1}.$$
    Since $f_q^{-1}$ and $\d (f_q)$ are continuous, $f_q^{-1}$ is of class $C^1$. Equation \eqref{eq: inverse_c_1_alpha_2} and Lemma \ref{lem: carac_c_1_alpha} imply that  $f_q^{-1}$ is of class $C^{1,\alpha}$.
    For all $q \in U$, we recall that
    $$(\pi_{q}^T)^{-1} |_{V_q} = \psi \circ f_{q}^{-1} \circ \d \big(\psi^{-1}\big) _q \circ t_{-q} |_{V_q} $$
    where $V_q = \pi_{q}^T (U \cap M)$.
    Thus the $\pi_{q}^T |_{U \cap M}$ are $C^{1,\alpha}$-diffeomorphisms on their respective image.\\

    \noindent \textbf{Step 5.} 
    In this last step, we will prove the point $(ii)$ of the proposition.
    We denote by $C_{\psi}$ and $C_{\psi^{-1}}$ the $\alpha$-Hölder of constant of $\d \psi$ and $\d (\psi^{-1})$ on $U \cap M$ and $\psi(U \cap M)$.
    For all $q \in U \cap M$, since $d \pi_q^T$ is 1-Lipchitz, we have that $\d(f_{q})$ is $\alpha$-Hölder on $W = \psi^{-1}(U \cap M)$, we denote its constant by $C_{f_{q}}$.
    We have 
    \[ C_{f_{q}} \leq \opnorm{\d \psi_{q}} \, C_{\psi} \leq \opnorm{(\d \psi)_{|U}  }\, C_{\psi}\]
    where the right term is finite since $U$ is bounded.
    Moreover, for all $q\in U \cap M$, since $\d (f_{q})_{\psi^{-1}(q)} = id_{\R^m}$ and using Equation \eqref{eq: maitrise_condition_projection}, we get for all $x \in W$
    \[ \opnorm{\d (f_q)_x - id_{\R^m} } = \opnorm{ \d ( \varphi_q)_x } \leq \frac{1}{2}\]
    which is exactly the first inequality of Condition \eqref{eq: condition_alpha_holder_sur_tout_ouvert} of Lemma \ref{lem: constant_holder_inversion_local} at the point $\psi^{-1}(q)$ and for the open set $W$.
    Moreover, we have
    \[ \opnorm{ \big( \d (f_q)_x \big)^{-1} } -1 \leq \opnorm{  \big( \d (f_q)_x \big)^{-1} - id_{\R^m} } \leq \opnorm{ \big( \d (f_q)_x \big)^{-1} }  \opnorm{ \d (f_q)_x - id_{\R^m}  } \]
    thus using the two previous inequality, we have
    \[ \opnorm{ \big( \d (f_q)_x \big)^{-1} } -1 \leq \frac{1}{2} \opnorm{ \big( \d (f_q)_x \big)^{-1} } \]
    which implies that for all $x \in W$,
    \[ \opnorm{ \big( \d (f_q)_x \big)^{-1} } \leq 2\]
    which is the second inequality of Condition \eqref{eq: condition_alpha_holder_sur_tout_ouvert}.
    Thus we can apply Lemma \ref{lem: constant_holder_inversion_local} to $f_{q}$ at the point $\psi^{-1}(q)$ and for the convex open set $W$.
    Thus $\d (f_{q})^{-1}$ is $\alpha$-Hölder on $f_{q}(W') = \d \psi_{q}(V_{q} - q)$ and its constant $C_{f^{-1}_{q}}$ verifies
    \[ C_{f^{-1}_{q}} \leq \frac{3.2^{3+\alpha} \opnorm{ \big(\d (f_{q})_{\psi^{-1}(q)} \big)^{-1} }}{1 + \alpha} C_{f_{q}} \leq \frac{3.2^{3+\alpha}}{1 + \alpha} \, {\opnorm{(\d \psi)_{|U} }} \, C_{\psi}\]
    since $\d (f_{q})_{\psi^{-1}(q)} = id_{\R^m}$.
    We recall that
    $$(\pi_{q}^T)^{-1} |_{V_q} = \psi \circ f_{q}^{-1} \circ d \big(\psi^{-1}\big) _q \circ t_{-q} |_{V_q} $$
    and that $\d (\psi^{-1})$, $d( f_{q}^{-1})$ are $\alpha$-Hölder on receptively $\psi(U \cap M)$ and $f_{q}(W)$.
    Thus $\d \big( (\pi_{q}^T)^{-1} \big) |_{U \cap M}$ is $\alpha$-Hölder on $V_{q}$ and its Hölder constant is bounded by a constant that does not depend on $q$.
\end{proof}



\subsection{Proof of Theorem~\ref{theo: majoration_fonction_critique}}\label{subsec:profftheme2}

In order to prove the Theorem \ref{theo: majoration_fonction_critique}, we will need the following results. 

\begin{lemma} \label{lem: intersection_variete_boule}
    Let $M$ be a $C^1$ submanifold of dimension $m$ in $\E^n$ and $p \in M$.
    There exists $R > 0$ such that for all $0 < r\leq R$,
    $M \cap \overline{B(p,r)}$
    is a $C^1$ submanifold of $\E^n$ with boundary.
\end{lemma}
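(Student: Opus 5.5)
We want to show that for small $r$, the set $M\cap\overline{B(p,r)}$ is a $C^1$ manifold with boundary, whose boundary is $M\cap S(p,r)$. The plan is to consider the function $\rho:M\to\R$, $\rho(x)=\|x-p\|^2$, which is $C^1$ on $M$, and to show it has no critical points on $M\cap\overline{B(p,R)}\setminus\{p\}$ for some $R>0$. Then for $0<r\leq R$, the value $r^2$ is a regular value of $\rho$ restricted to a neighbourhood of $M\cap S(p,r)$. The sublevel set $\{\rho\leq r^2\}$ of a $C^1$ function near a regular value is a $C^1$ manifold with boundary. This is the standard regular-value argument: locally straighten $\rho$ with the $C^1$ implicit function theorem in a chart, so that $\{\rho\le r^2\}$ becomes a half-space. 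At interior points, where $\rho<r^2$, the set is open in $M$ and hence a manifold without boundary there.

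The key step is the absence of critical points. The differential is $\d\rho_x(v)=2\langle x-p, v\rangle$ for $v\in T_xM$, so $x$ is critical exactly when $x-p\perp T_xM$. We argue via a local graph description. Since $M$ is $C^1$, near $p$ we can write $M$ as the graph of a $C^1$ map $h:V\subset T_pM\to N_pM$ with $h(0)=0$ and $\d h_0=0$. This follows from the Inverse Function Theorem (Theorem~\ref{theo: inverse_function} with $\alpha=0$), or equivalently Proposition~\ref{prop: graph_constant_holder_global} for the single point $q=p$.

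For $x=p+u+h(u)$, take the tangent vector $v=u+\d h_u(u)\in T_xM$. Then
\[
\langle x-p,v\rangle=\|u\|^2+\langle h(u),\d h_u(u)\rangle\geq \|u\|^2-\|h(u)\|\,\opnorm{\d h_u}\,\|u\|.
\]
Choose $R$ small enough that $\opnorm{\d h_u}\le 1/2$ on the relevant ball. By the mean value inequality this gives $\|h(u)\|\le\frac12\|u\|$, so the quantity above is at least $\frac34\|u\|^2>0$ whenever $u\neq0$. Moreover $u=0$ forces $x=p$, so every $x\neq p$ in this neighbourhood is a regular point of $\rho$.

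The main obstacle is making sure the graph chart covers all of $M\cap\overline{B(p,R)}$. Otherwise far-away sheets of $M$ could enter the ball; this is the embedded-submanifold issue. We handle it using the local chart of the submanifold definition: there is an open $\mathcal U\ni p$ with $M\cap\mathcal U$ equal to the graph piece, and we take $R$ small enough that $\overline{B(p,R)}\subset\mathcal U$ and that the graph domain contains the projection of the ball. With that choice, $M\cap\overline{B(p,r)}$ is contained in the graph, and the regular-value argument above applies uniformly for all $0<r\le R$.
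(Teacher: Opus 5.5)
Your proposal is correct and follows the paper's argument: you show that the level is a regular value of the squared distance to $p$ restricted to $M$ by computing in a local graph chart with $h(0)=0$, $\d h_0=0$, and then apply the $C^1$ sublevel-set theorem. Where the paper uses the estimate $o(\|u\|)$, you give the explicit bound $\langle x-p,v\rangle\geq\frac34\|u\|^2$, and your choice $\overline{B(p,R)}\subset\mathcal U$ spells out a point the paper leaves implicit, namely that no other sheet of $M$ enters the ball.
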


\begin{proof}
    Let $g$ be the restriction  to $M$ of the map $x\mapsto \|x-p\|^2$.  The intersection $M \cap \overline{B(x,r)}$ is the sublevel set $\{g(x)\leq r\}.$ It is well-known that if $r$ is a regular value of the $C^1$ fonction $g$, then $\{g(x)\leq r\}$ is a $C^1$ submanifold with boundary $g^{-1}(r)$ (see \cite{Milnor1965}, Lemma 3 p12). The number $r$ is a regular value if, for all $x\in g^{-1}(r)$, we have: ${\rm rank\,} \d g_{x}=1$. Let $F:U\subset T_pM\to M\subset \E^n$, $u\mapsto F(u)=(u,f(u))$ be a $C^1$ local cartesian parametrization of $M$ such that $F(0)=p$ and $\d f_{0}=0.$ On these coordinates, the map $g$ and its gradient read as follow 
    \[g(u)=\|u\|^2+\|f(u)\|^2\quad\mbox{and}\quad {\rm grad}\, g (u) =2u+2[(Jac\,f)(u)]^T(f(u))\]
    Since $f$ is $C^1$, we have $f(u)=o(\|u\|)$ and $\lim_{u\to 0}(Jac\,f)(u)=0$. 
    Therefore
    \[2[(Jac\,f)(u)]^T(f(u))=o(\|u\|).\]
    This shows that, when $r>0$ is small enough, ${\rm grad}\, g (u)\neq 0.$ The statement of the lemma follows. 
\end{proof}

\noindent We recall that $M^d = \{ x \in \E^n ~|~ \d_M(x)  \leq d \}$.

\begin{proposition} \label{prop: lien_C_1alpha_grad_g}
Let $M$ be a $C^{1,\alpha}$ compact submanifold without boundary of $\E^n$.
There exist $d > 0$ and $C > 0$ such that for any $p \in \mathscr{M}(M) \cap M^d$ and $q_1$ and $q_2$ two projections of $p$ on $M$ that realize the diameter of $\Gamma_K(p)$ then we have
\[ 1 - \| \nabla \d_M(p) \|^2 \leq C \, \|q_1-q_2\|^{2 \alpha} .\]
\end{proposition}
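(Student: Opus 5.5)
The plan is to combine Lemma~\ref{lem: consequnce_Jung} with the local graph description of Proposition~\ref{prop: graph_constant_holder_global}. By Lemma~\ref{lem: consequnce_Jung} (in its proof form),
\[1-\|\nabla \d_M(p)\|^2 \leq \frac{\|q_1-q_2\|^2}{2(1+\frac1n)\,\d_M(p)^2}\leq \frac{1-\sca{n_1}{n_2}}{1+\frac1n}=\frac{\|n_1-n_2\|^2}{2(1+\frac1n)},\]
where $n_i=(p-q_i)/\d_M(p)$. It therefore suffices to show $\|n_1-n_2\|\leq C\|q_1-q_2\|^{\alpha}$ whenever $\d_M(p)\leq d$. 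The point is that $n_1$ and $n_2$ are unit normals at the nearby points $q_1,q_2$, and a $C^{1,\alpha}$ manifold has $\alpha$-Hölder normal spaces. The estimate must also be made uniform in $p$.

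The first step is uniformity. Cover the compact $M$ by finitely many neighborhoods $U_i$ given by Proposition~\ref{prop: graph_constant_holder_global}, with Hölder constants $K_i$. Let $\delta>0$ be a Lebesgue number of the cover, and set $K=\max K_i$. Choose $d\leq\delta/4$. Then $\|q_1-q_2\|\leq 2\d_M(p)\leq 2d<\delta$, so $q_1$ and $q_2$ lie in a common $U=U_i$. One can also shrink so that both lie in the graph domain of $h_{q_1}$.

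The second step is the normal estimate. Write $q_1=(0,0)$ and $q_2=(x,h_{q_1}(x))$ in $T_{q_1}M\times N_{q_1}M$, with $h:=h_{q_1}$, $h(0)=0$, $\d h_0=0$. The tangent space at $q_2$ is the graph of $\d h_x$, and $\opnorm{\d h_x}\leq K\|x\|^{\alpha}\leq K\|q_1-q_2\|^\alpha$. Decompose $n_2=a+b$ with $a\in T_{q_1}M$ and $b\in N_{q_1}M$. Orthogonality of $n_2$ to the vectors $(v,\d h_x v)$ gives $\sca{a}{v}=-\sca{b}{\d h_x v}$. Hence $\|a\|\leq \opnorm{\d h_x}\leq K\|q_1-q_2\|^\alpha$, so the tangential part of $n_2$ at $q_1$ is $O(\|q_1-q_2\|^\alpha)$.

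The third step, which I expect to be the main obstacle, is that this alone does not control $\|n_1-n_2\|$ when the codimension exceeds one. Both $n_1$ and $b$ are normal at $q_1$ but could point in different normal directions, and indeed for a medial axis point they genuinely differ. I would instead use the geometry of the projection. Since $q_1,q_2$ lie on the sphere of radius $\d_M(p)$ about $p$, we have $\|q_1-q_2\|^2=2\d_M(p)^2(1-\sca{n_1}{n_2})$. It is better to bound $\sca{q_2-q_1}{n_1}$ and $\sca{q_1-q_2}{n_2}$. Since $n_1\perp T_{q_1}M$, the $C^{1,\alpha}$ Taylor bound (Lemma~\ref{lem: carac_c_1_alpha}, which applies to $h$ on a convex domain) gives
\[|\sca{q_2-q_1}{n_1}|=|\sca{h(x)}{n_1}|\leq C'\|x\|^{1+\alpha}\leq C'\|q_1-q_2\|^{1+\alpha},\]
and symmetrically for $n_2$ with a uniform constant. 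Now use $\sca{q_2-q_1}{n_1}=\d_M(p)(1-\sca{n_1}{n_2})=\|q_1-q_2\|^2/(2\d_M(p))$. This yields $\|q_1-q_2\|^2/(2\d_M(p))\leq C'\|q_1-q_2\|^{1+\alpha}$, that is $\|q_1-q_2\|^{1-\alpha}\leq 2C'\d_M(p)$. I then plug this into the Jung bound, taking care with $\d_M(p)$ in the denominator:
\[1-\|\nabla \d_M(p)\|^2\leq \frac{\|q_1-q_2\|^2}{2(1+\frac1n)\d_M(p)^2}=\frac{2\,\sca{q_2-q_1}{n_1}^2}{(1+\frac1n)\|q_1-q_2\|^2}\leq \frac{2C'^2}{1+\frac1n}\|q_1-q_2\|^{2\alpha}.\]
This is exactly the desired inequality, so the argument reduces to the normal-component Taylor estimate with constants uniform over $q_1\in M$, supplied by the finite cover and Proposition~\ref{prop: graph_constant_holder_global}. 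The remaining delicate points are checking that $q_2$ stays in the graph chart of $h_{q_1}$, which uses the small $d$, and that $\|x\|\leq\|q_1-q_2\|$, which holds because $x$ is an orthogonal projection.
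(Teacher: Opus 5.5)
Your proposal is correct and is essentially the paper's argument. The paper also bounds the $n_1$-component of $q_2-q_1$ by $\|h(x_2)\|\leq K'\|q_1-q_2\|^{1+\alpha}$, phrased via the auxiliary point $q_2'$, Pythagoras and $\sin\beta=\|q_2'-q_2\|/\|q_1-q_2\|$, which is your identity $\sca{q_2-q_1}{n_1}=\|q_1-q_2\|^2/(2\d_M(p))$; it then concludes with Lemma~\ref{lem: consequnce_Jung}, so your Step~2 is unnecessary. The only technical difference is the Taylor bound: the paper uses the path version Lemma~\ref{lem: carac_c_1_alpha_2} because the graph domain $V_{q_1}$ need not be convex, whereas to use the convex Lemma~\ref{lem: carac_c_1_alpha} you must justify that $V_{q_1}$ contains a ball $B(0,\rho)$ (in your coordinates) with $\rho$ uniform in $q_1$.
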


\begin{proof}
According to Proposition \ref{prop: graph_constant_holder_global}, by compactness of $M$, there exist $R > 0$ and $K > 0$ such that, if $q_1 , q_2 \in M$ verify 
$$\|q_1 - q_2 \| < R$$ 
then $M \cap B(q_1,R)$ is locally the graph of a $C^{1,\alpha}$ function $h : V \cap T_{q_1} M \to N_{q_1} M$ \textit{i.e.}
\[ M \cap B(q_1,R) = \big\{ (x,h(x)) ~|~x \in V \subset T_{q_1} M \big\} ~~ \text{and} ~~ \d h_{x_1} = 0\]
with $q_1 = (x_1,0)$ and $V$ an open set of $T_{q_1} M$.
Moreover $\d h$ is $\alpha$-Hölder on $V$ and the $\alpha$-Hölder constant of $\d h$ is smaller than $K$.
Furthermore, for $R$ small enough $ M \cap B(q_1,R)$ and $V$ are arcwise connected sets.

Let $p \in \mathscr{M}(M)$ and $q_1$ and $q_2$ two projections of $p$ on $M$, that realize the diameter of $\Gamma_K(p)$. We have $$\frac{1}{2} \|q_1 - q_2\| \leq \d_M(p).$$
Thus, for the rest of the proof, we are going to suppose that $d_M(p) \leq d :=  \frac{R}{2}$. \\

\begin{figure}[!ht]
\centering
\includegraphics[width=0.50\linewidth]{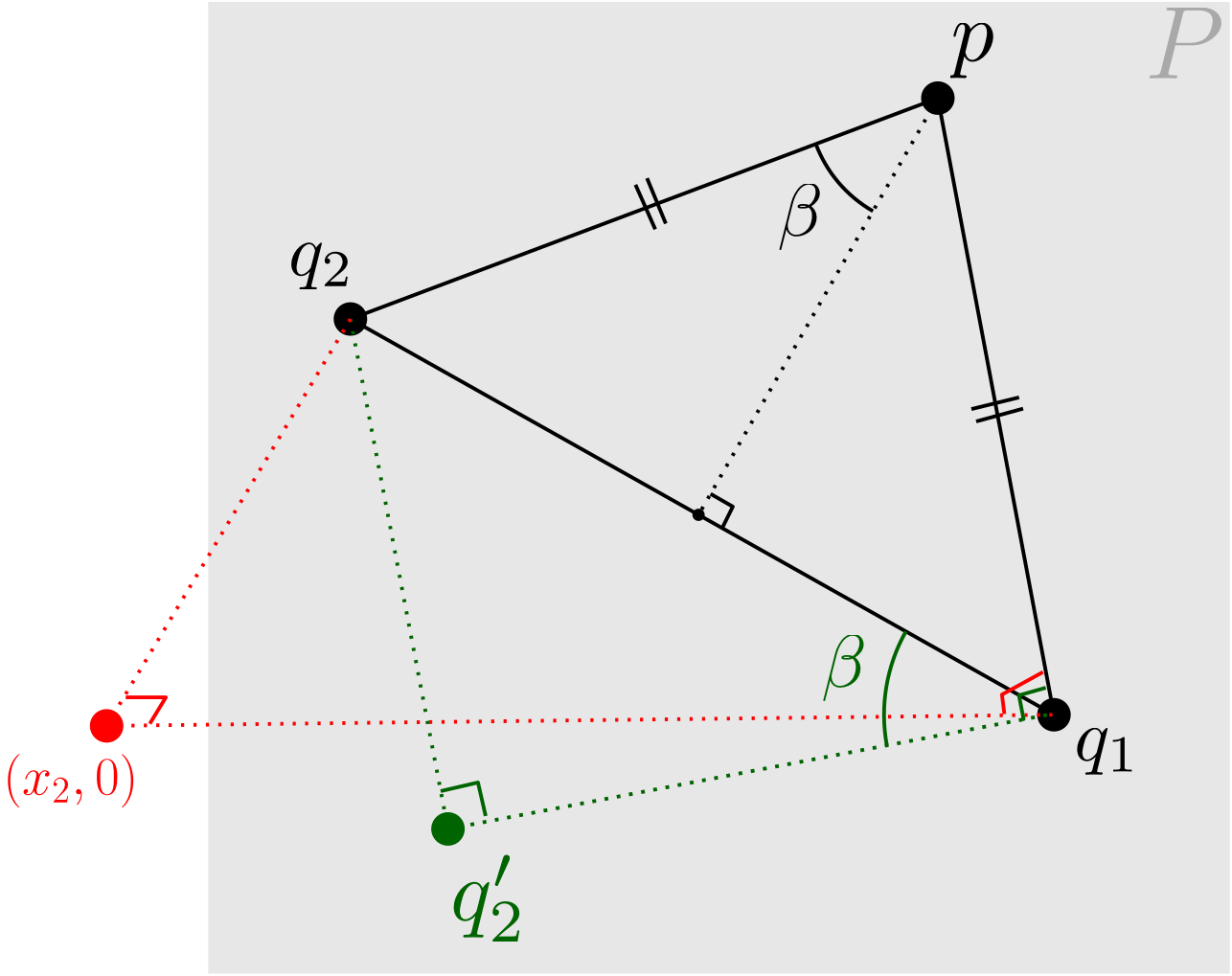}
\caption{\centering The construction of the proof.}
\label{fig: regularite_c_1_alpha}
\end{figure}

Let $h : V \subset T_{q_1}M \to N_{q_1}M$ as described above and we denote $q_1 = (x_1,0)$ and $q_2 = (x_2 , h(x_2))$ with $x_2 \in V$ (see Figure \ref{fig: regularite_c_1_alpha}).
To prove our result, we need to establish three inequalities.
The map $h$ is a $C^{1,\alpha}$ function and $\d h$ is $\alpha$-Hölder on $V \subset T_{q_1}M$ of constant $K$.
Using Lemma \ref{lem: intersection_variete_boule}, we have that for $R$ small enough $\overline{B(q_1,R)} \cap M$ is a compact connected $C^1$ submanifold.
Since $V$ is the image by $\pi_{q_1}^T$ of $B(q_1,R) \cap M$, we have that $\overline{V}$ is a connected compact $C^1$ submanifold of $\E^n$.
If we apply Lemma \ref{lem: carac_c_1_alpha_2}, to $h |_{V}$ , we have
\begin{align*}
   \| (x_2,0) - q_2 \| &= \| h(x_2) \| =\| h(x_2) - h(x_1) - \d h_{x_1}.(x_2-x_1) \| \leq  \frac{C_h \lambda_M}{1+\alpha} \| x_2 - x_1 \|^{1+\alpha}   
\end{align*}
since $h(x_1) = 0$ and $\d h_{x_1} = 0$ with $\lambda_M$ the constant of Lemma \ref{lem: distance_geodesique_norme_R_n} of $M$.
Thus, we obtain our first inequality
\begin{equation} \label{eq: inegalite_1}
     \| (x_2,0) - q_2 \| \leq  K' \| (x_2,0) - q_1\|^{1+\alpha}.
\end{equation}
with $K' = \frac{K\, \lambda_M}{1+\alpha}$.

\noindent Second, we define $\mathscr{P}$ the affine plane containing $p$, $q_1$ and $q_2$ and we denote by $q_2'$ the orthogonal projection of $q_2$ onto $\mathscr{P}$ orthogonal to ${pq_1}$ of $(x_2,0)$ (see Figure \ref{fig: regularite_c_1_alpha}). 
We remark that since $(x_2,0) - q_1 \in T_q M$ and ${pq_1} \in N_{q_1} M$ (see Lemma \ref{lem_normale_distnace_min_variete}) then the points $(x_2,0)$, $q_1$ and $p$ formed a right angle.
Thus since ${q_2 q_2'}$ is the orthogonal projection of $(x_2,0) - q_2$, we get our second inequality
\begin{equation} \label{eq: inegalite_2}
    \| q_2' - q_2 \| \leq \| (x_2,0) - q_2 \|.
\end{equation}
For the last inequality, we remark that $q_2'$, $q_1$ and $q_2$ forms a right triangle at $q_2'$ and that $(x_2,0)$, $q_1$ and $q_2$ formed also a right triangle at $(x_2,0)$, thus using the Pythagorean theorem, we get
\[ \|q_2 - q_2' \|^2 + \| q_2' - q_1 \|^2 =  \|(x_2,0) - q_2 \|^2 + \| (x_2,0) - q_1 \|^2\]
thus, using Inequality \eqref{eq: inegalite_2}, we get
\begin{equation} \label{eq: inegalite_3}
     \| (x_2,0) - q_1 \| \leq  \| q_2' - q_1 \|.
\end{equation}
Putting together Inequality \eqref{eq: inegalite_1}, \eqref{eq: inegalite_2} and \eqref{eq: inegalite_3}, we have
\[  \| q_2' - q_2 \| \leq \| (x_2,0) - q_2 \| \leq   K' \| (x_2,0) - q_1 \|^{1+\alpha} \leq  K' \| q_2' - q_1 \|^{1+\alpha} \]
but since $\| q_2' - q_1 \| \leq \| q_2 -q_1 \|$, we get
\[  \| q_2' - q_2 \| \leq  K \| q_2 - q_1 \|^{1+\alpha}.\]
Now, we recall that $p$, $q_1$ and $q_2$ form an isosceles triangle and we denote by $\beta \in [0,\frac{\pi}{2}[$ the semi-angle formed by $q_1$, $p$ and $q_2$.
We have
\[ \sin(\beta) = \frac{\| q_2'-q_2\|}{\|q_1-q_2\|} \leq  K' \| q_2 - q_1 \|^{\alpha}. \]
Finally, by Lemma \ref{lem: consequnce_Jung}, we get
\[1 - \| \nabla \d_M(p) \|^2 \leq \frac{2}{1 + \frac{1}{n}} \sin^2(\beta) \leq \underbrace{\frac{2 (K')^2}{1 + \frac{1}{n}}}_{C} \|q_1-q_2\|^{2\alpha}.\]
\end{proof}


\begin{proof}[Proof of Theorem \ref{theo: majoration_fonction_critique}]
Let $p \in \E^n \setminus M$. We can assume that $p$ belongs to the medial axis of $M$, otherwise the result is straightforward.
Let $q_1$ and $q_2$ be two projections that realize the diameter of the set of projections of $p$ on $M$.
According to Proposition \ref{prop: lien_C_1alpha_grad_g}, there exist $d_0 > 0$ and $C' > 0$ (both dependent of $M$ only) such that if $\d_M(p) < d_0$ then 
\[ 1 - \| \nabla \d_M(p) \|^2 \leq C' \, \|q_1-q_2\|^{2 \alpha} .\]
From now on, we suppose that $\d_M(p) < d_0$.
Using the triangular inequality
$$\| q_1 - q_2\| \leq \| q_1 - \Omega_M(p)\| + \| q_2 -  \Omega_M(p)\| \leq 2 ~ \mathscr{R}_M(p)$$
then by Proposition \ref{prop: expression_grad_g_rayon}
$$\| q_1 - q_2\|^2 \leq 4 ~\mathscr{R}_M(p)^2 = 4 d_M(p)^2 \Big( 1 - \vert \vert \nabla \d_M(p) \vert \vert^2 \Big).$$ 
Thus, we get that
$$ 1 - \| \nabla d_M(p) \|^2  \leq 4 ^\alpha C' ~d_M(p)^{2 \alpha} \Big( 1 - \| \nabla \d_M(p) \|^2 \Big)^{\alpha}.$$
Assume now that $\d_M(p) \geq d_0$, since $ \| \nabla \d_M(p) \| \leq 1$ it follows that
$$1 - \| \nabla \d_M(p) \|^2 \leq \big( 1 - \| \nabla \d_M(p) \|^2 \big)^\alpha.$$ 
Then by setting $C = \max \big( 4 ^\alpha C' , \frac{1}{d_0^{2 \alpha}} \big)$, we have, for all $p \in \E^n \setminus M$,
$$ 1 - \| \nabla d_M(p) \|^2  \leq C ~d_M(p)^{2 \alpha} \Big( 1 - \| \nabla \d_M(p) \|^2 \Big)^{\alpha}.$$
In particular, if $\alpha < 1$, one has
$$ 1 - \| \nabla \d_M(p) \|^2  \leq C^{\frac{1}{1 - \alpha}} ~ \d_M(p)^{\frac{2 \alpha}{1- \alpha}}$$
and if $\alpha = 1,$
$$ 1 - \| \nabla d_M(p) \|^2  \leq C ~d_M(p)^{2} \Big( 1 - \| \nabla \d_M(p) \|^2 \Big).$$
Whenever $\d_M(p) < \frac{1}{\sqrt{C}}$, the above inequality can hold only if
$$1 - \| \nabla d_M(p) \|^2 = 0.$$
\end{proof}

\begin{proof}[Proof of Corollary~\ref{coro: majoration_fonction_critique}]
By taking the lower bound in Theorem \ref{theo: majoration_fonction_critique}, we get for all $d > 0$
$$ 1 - \chi_M(d)^2 \leq C ~ d^{\frac{2 \alpha}{1- \alpha}}.$$
We know that $\chi_M(0) = 1$ and that $0 \leq 1 - \chi_M(d)$ thus we have
$$0 \leq \big( 1+\chi_M(d) \big) \frac{1- \chi_M(d)}{d} \leq C ~ d^{\frac{3\alpha - 1}{1- \alpha}}.$$
Hence, when $d$ tends towards $0$, we obtain the desired result.
\end{proof}

\section{Sharpness of the inequality in Theorem~\ref{theo: majoration_fonction_critique}} \label{sec: Deux_exemples}
The goal of this section is to show the sharpness of the exponent that appears in Theorem~\ref{theo: majoration_fonction_critique}. For this purpose, we study the family of curves 
\[
\C_\alpha:=\{(t,|t|^{1+\alpha}),t\in \R\},
\]
where $\alpha\in(0,1)$. Note that each curve $\C_\alpha$ is of class $C^{1,\alpha}$, but not of class $C^{1,\alpha'}$ for any $\alpha'>\alpha$. The main result is the following proposition that states that the exponent of Theorem~\ref{theo: majoration_fonction_critique} is reached for these curves.
\begin{proposition} \label{prop: equivalent_fct_critique_cas_pair}
Let $\alpha\in [0,1[$. The critical function $\chi :\R\to \R$ of the curve $\C_\alpha$ satisfies
 $$1 - \chi^2(d) \underset{d \to 0}{\sim} (1+\alpha )^{ \frac{2}{1-\alpha}} d^\frac{2 \alpha}{1 - \alpha}.$$
\end{proposition}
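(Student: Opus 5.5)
The plan is to compute the critical function of $\C_\alpha$ exactly, up to a one-parameter family, by locating the medial axis and then working on the explicit two-point projection sets. Since $t\mapsto |t|^{1+\alpha}$ is convex, $\C_\alpha$ is the boundary of the convex epigraph $E=\{y\geq |x|^{1+\alpha}\}$.

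\textbf{Step 1: points below the curve.} For $p\notin E$, projecting onto $\C_\alpha$ is the same as projecting onto the closed convex set $E$. That projection is unique, so $\|\nabla \d_{\C_\alpha}(p)\|=1$ there.

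\textbf{Step 2: points above the curve.} Take $p=(a,b)$ in the interior of $E$. If $a>0$, then $\|p-(t,|t|^{1+\alpha})\|<\|p-(-t,|t|^{1+\alpha})\|$ for all $t>0$, so every projection has $t\geq 0$. On the right branch I will show uniqueness of the projection. The idea is to use that the curvature of $t\mapsto t^{1+\alpha}$ is decreasing in $t>0$. Two normals issued from $0<t_1<t_2$ on the right branch then meet only at a point lying beyond the centre of curvature at $t_1$, so such a point cannot have both feet as nearest points. Equivalently, one checks that $t\mapsto \|p-\gamma(t)\|^2$ has a single critical point on $[0,\infty)$ whenever $p$ lies on the concave side. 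This is the step I expect to be the main obstacle, and I would first try the explicit derivative analysis. By symmetry the same holds for $a<0$, so $\Medial(\C_\alpha)=\{(0,y),\ y>0\}$, and $\chi(d)$ is attained at the unique point of the $y$-axis at distance $d$.

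\textbf{Step 3: the computation on the axis.} For $p=(0,y)$, the projections are $(\pm t,t^{1+\alpha})$ with $t>0$ minimizing $t^2+(y-t^{1+\alpha})^2$. At $t=0^+$ the derivative of this function is $-2(1+\alpha)yt^{\alpha}+O(t)<0$, so $t=0$ is not a minimizer. The critical equation is
$$y=g(t):=t^{1+\alpha}+\frac{t^{1-\alpha}}{1+\alpha}.$$
The function $g$ is strictly increasing, so there is exactly one critical $t=t(y)>0$. The set $\Gamma(p)$ then consists of two points at distance $2t$ apart, hence $\mathscr{R}(p)=t$. From $y-t^{1+\alpha}=t^{1-\alpha}/(1+\alpha)$ we get
$$d^2=t^2+\frac{t^{2-2\alpha}}{(1+\alpha)^2}.$$
Proposition~\ref{prop: expression_grad_g_rayon} then gives $1-\chi(d)^2=t^2/d^2$. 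Since $d$ is increasing in $t$, each small $d$ corresponds to a unique $t$, with $t\to0$ as $d\to 0$.

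\textbf{Step 4: the asymptotics.} As $t\to0$ we have $d\sim t^{1-\alpha}/(1+\alpha)$, so $t\sim ((1+\alpha)d)^{1/(1-\alpha)}$. Therefore
$$1-\chi(d)^2=\frac{t^2}{d^2}\sim (1+\alpha)^{\frac{2}{1-\alpha}}\,d^{\frac{2}{1-\alpha}-2}=(1+\alpha)^{\frac{2}{1-\alpha}}\,d^{\frac{2\alpha}{1-\alpha}},$$
which is the claim for $\alpha\in\,]0,1[$.

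\textbf{The case $\alpha=0$.} The same computation gives $d^2=2t^2$ exactly, so $1-\chi^2\equiv 1/2$ while the stated right-hand side equals $1$. So $\alpha=0$ has to be examined separately, and the formula as written does not match there; I would flag this rather than try to force it.
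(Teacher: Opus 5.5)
Your argument has the same structure as the paper's. You identify $\mathscr{M}(\C_\alpha)$ as the positive $y$-axis with two symmetric projections, then compute on the axis. Your Steps~3--4 reproduce Lemma~\ref{lem: fonction_pair_C1alpha} and the final asymptotic computation; your $t^2/d^2$ is the paper's $f'(t)^2/(1+f'(t)^2)$. These steps are correct for $\alpha\in\,]0,1[$.

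The one step you leave open is the uniqueness of the projection on the right branch in Step~2. The paper obtains it from the determinant computation of Lemma~\ref{lem: medial_axis_moitie_de_courbe} and the inequalities of Lemma~\ref{lem: cas_pair_inegalite_intermediare}. Your curvature claim is true, but as written it is only asserted. It can be justified through the evolute $c=\gamma+\rho N$: one has $c'=\rho' N$ with $\rho'>0$, and $N$ turns by less than $\pi/2$ along the branch. Hence two normals meet beyond the first centre of curvature, where the second-order condition $\|p-q_1\|\leq\rho(t_1)$ fails.

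The derivative analysis you mention is much quicker than either argument. For $p=(a,b)$ with $a\geq 0$ and $\phi(t)=(t-a)^2+(t^{1+\alpha}-b)^2$,
\[
\tfrac12\,t^{-\alpha}\phi'(t)=t^{1-\alpha}+(1+\alpha)\,t^{1+\alpha}-(1+\alpha)\,b-a\,t^{-\alpha}
\]
is strictly increasing on $]0,+\infty[$. So $\phi$ has at most one critical point there, whatever $b$ is. For $a>0$ and $\alpha>0$, one also has $\tfrac12\phi'(t)\to -a<0$ as $t\to 0^+$, so $t=0$ is not a minimizer. Together with your reflection argument, this gives a unique projection for every $p$ with $a\neq 0$, and your Step~1 becomes unnecessary.

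Your remark on $\alpha=0$ is correct and exposes an error in the statement. For $f(t)=|t|$ one has $f'\equiv 1$ on $]0,+\infty[$. Hence $1-\chi^2\equiv\frac12$ and $\tilde d(t)^2=2t^2$, whereas the right-hand side equals $1$. The paper's asymptotics $1-\chi^2(\tilde d(t))\sim(1+\alpha)^2t^{2\alpha}$ and $\tilde d(t)^2\sim t^{2(1-\alpha)}/(1+\alpha)^2$ rely on $f'(t)\to 0$, which holds only for $\alpha>0$. The proposition should therefore be stated for $\alpha\in\,]0,1[$.
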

Note that Theorem~\ref{theo: majoration_fonction_critique} holds for compact submanifolds with no boundary. The curves $\C_\alpha$ are not compact. However we can restrict each $\C_\alpha$ to any interval $[-a,a]$ and connect its two endpoints with a regular curve of class $C^2$, in order to build a compact $1$-dimensional manifold without boundary, whose critical function is the same that the one of $\C_\alpha$.\\

\noindent \textbf{Geometric interpretation.} Given a point $p \in  \mathscr{M}(\C_\alpha)$, we prove below that it has only two projections $q_1,q_2$ that are symmetrical with respect to the vertical axis. 
We denote by $\beta = \beta_p \in [0,\frac{\pi}{2}[$ the semi-angle formed by $q_1$, $p$ and $q_2$.
It is readily checked that this angle is linked to the generalised gradient by the formula
$$ \| \nabla \d_{\C_\alpha}(p) \| = \cos \beta.$$
Since all the curves $\C_\alpha$ are of class $C^1$, by Theorem \ref{theo: C1_implique_continuite_fonction_crit}, when $p$ tends to $\C_\alpha$, the angle $\beta$ tends to $0$. 
Proposition~\ref{prop: equivalent_fct_critique_cas_pair} states that for those curves $\C_\alpha$, the larger $\alpha$ is, the faster this angle $\beta$ tends to zero. 
This is illustrated in Figure~\ref{fig: graph_fonction_exemple_pair}.
\begin{figure}[H]
\begin{subfigure}[b]{0.49\textwidth}
\centerline{\includegraphics[width=1\linewidth]{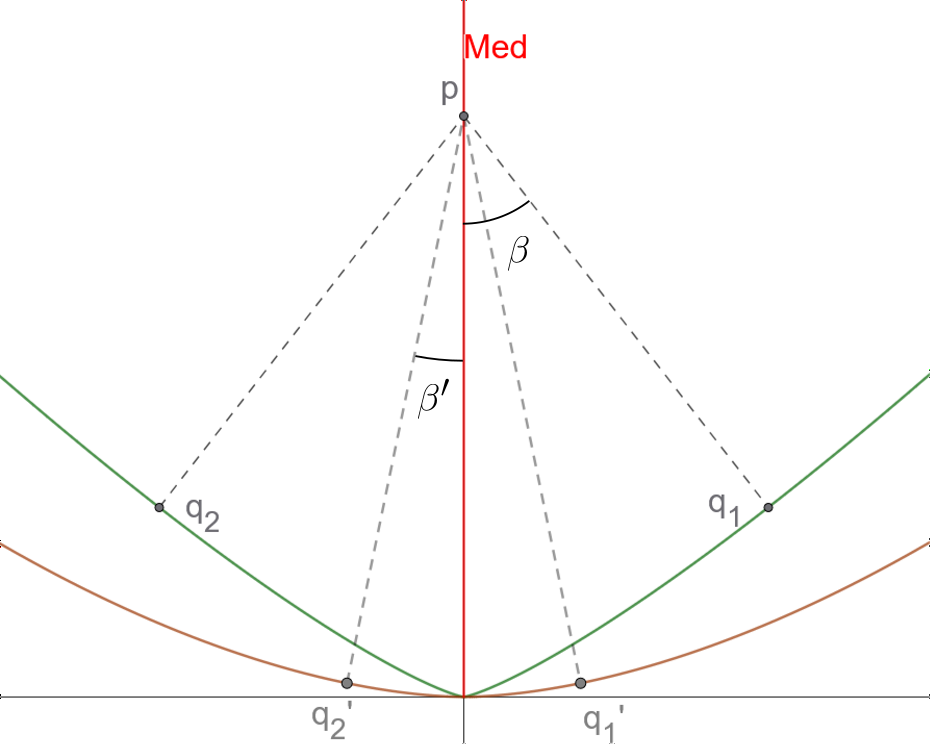}}
\caption{\centering Curve $\C_\alpha$ for $\alpha = 5/4$ in green and $\alpha = 7/4$ in red. $q_1$, $q_2$ (\textit{resp.} $q_1'$, $q_2'$) are the projections of $p$ on the green (\textit{resp.} red) curve. The angle $\beta$ depends on the regularity of the curve.}
\end{subfigure}
\hfill
\begin{subfigure}[b]{0.49\textwidth}
\centerline{\includegraphics[width=1\linewidth]{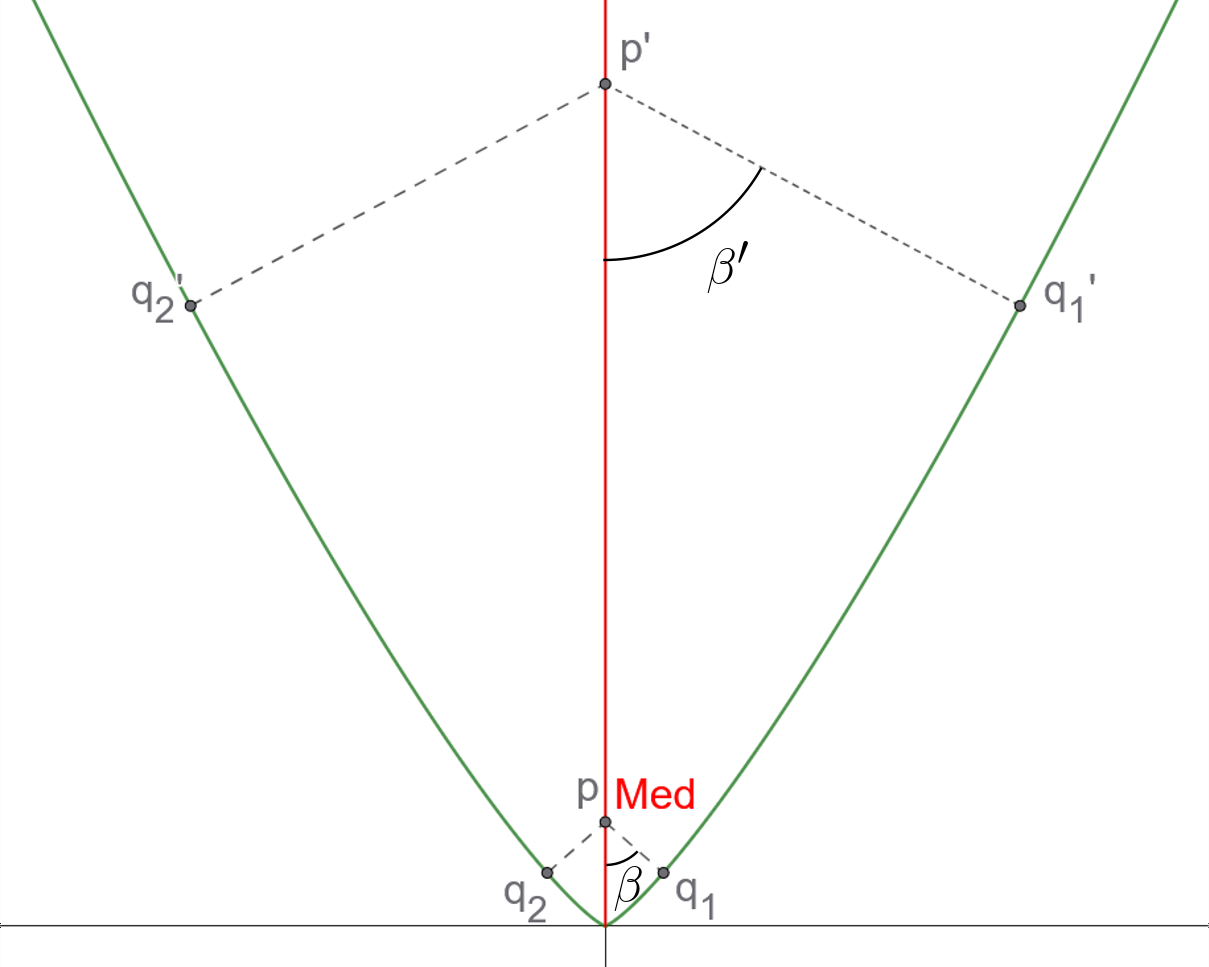}}
\caption{\centering Curve $\C_\alpha$ with $\alpha = 5/4$ (green) and its medial axis (red). The angle $\beta$ tend to zero when the distance converge to $0$. (Be aware of the zoom out with respect to (a))}
\end{subfigure}
\caption[]
{\centering Different curves $\C_\alpha$ and their medial axis.}
\label{fig: graph_fonction_exemple_pair}
\end{figure}

\begin{corollary} \label{coro: equivalent_derive_fonction_crit_cas_pair} 
Let $\alpha\in [0,1[$. The derivative of the critical function $\chi:\R\to \R$ of the curve $\C_\alpha$ around $0$ satisfies
$$\chi'(d) \underset{d \to 0}{\sim} - \frac{\alpha (1 + \alpha)^{\frac{2}{1-\alpha}}}{1-\alpha} \, d^\frac{3 \alpha - 1}{1-\alpha}.$$ 
\end{corollary}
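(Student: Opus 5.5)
The plan is to reuse the explicit description of the medial axis of $\C_\alpha$ near the origin, which also underlies Proposition~\ref{prop: equivalent_fct_critique_cas_pair}. Near the curve, $\Medial(\C_\alpha)$ is a segment of the positive vertical axis. Each point $p=(0,y)$ of it has exactly two symmetric projections $q_{1,2}=(\pm t,t^{1+\alpha})$ with $t>0$. Off the medial axis the gradient has norm $1$. Hence, for $d$ small, the infimum defining $\chi(d)$ is attained at the unique point of $\partial \C_\alpha^d$ lying on the axis, where $\chi(d)=\|\nabla\d_{\C_\alpha}(p)\|=\cos\beta$.

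\textbf{Step 1: parametrize by $t$.} The normality condition of Lemma~\ref{lem_normale_distnace_min_variete} at $q=(t,t^{1+\alpha})$ reads $t+(1+\alpha)t^{\alpha}(t^{1+\alpha}-y)=0$, which gives
$$y(t)=t^{1+\alpha}+\frac{t^{1-\alpha}}{1+\alpha}.$$
Then $d(t)^2=t^2+(y-t^{1+\alpha})^2=t^2+\dfrac{t^{2-2\alpha}}{(1+\alpha)^2}$. Since $\sin\beta=t/d$, we get
$$g(t):=1-\chi^2=\frac{t^2}{d(t)^2}=\frac{(1+\alpha)^2t^{2\alpha}}{1+(1+\alpha)^2t^{2\alpha}}.$$

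\textbf{Step 2: invert $d(t)$.} The map $t\mapsto d(t)$ is $C^1$ on $]0,\varepsilon[$, with
$$d'(t)\sim\frac{1-\alpha}{1+\alpha}\,t^{-\alpha}>0$$
as $t\to0$. So for small $d$ it has a $C^1$ inverse $t(d)$, and $t(d)\sim\big((1+\alpha)d\big)^{1/(1-\alpha)}$ because $d\sim t^{1-\alpha}/(1+\alpha)$. Consequently $\chi(d)=\sqrt{1-g(t(d))}$ is differentiable for small $d>0$, and the chain rule gives
$$-2\chi(d)\chi'(d)=\frac{g'(t)}{d'(t)}.$$

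\textbf{Step 3: asymptotics.} As $t\to0$, $g'(t)\sim 2\alpha(1+\alpha)^2t^{2\alpha-1}$, so
$$\frac{g'}{d'}\sim\frac{2\alpha(1+\alpha)^3}{1-\alpha}\,t^{3\alpha-1}.$$
Substituting $t\sim((1+\alpha)d)^{1/(1-\alpha)}$ makes the power of $(1+\alpha)$ equal to $3+\frac{3\alpha-1}{1-\alpha}=\frac{2}{1-\alpha}$. Since $\chi(d)\to1$, dividing by $-2\chi$ gives
$$\chi'(d)\sim-\frac{\alpha(1+\alpha)^{\frac{2}{1-\alpha}}}{1-\alpha}\,d^{\frac{3\alpha-1}{1-\alpha}}.$$

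\textbf{Main obstacle.} The delicate point is the geometric reduction in the first paragraph. One must show that, for $d$ small, the only points of $\partial\C_\alpha^d$ with $\|\nabla\d\|<1$ are on the vertical axis. One must also show that each such point has exactly the two projections given by the single positive root $t$ of the normal equation, so that $\chi$ is given exactly by the formula above rather than merely bounded by it. I would take this from the analysis behind Proposition~\ref{prop: equivalent_fct_critique_cas_pair}. If it needs to be redone, I would use the convexity of $\C_\alpha$: its epigraph is convex, so the region below the curve has no medial axis, and by symmetry any point with two projections must lie on the axis. Monotonicity of $y(t)$ for small $t$ then gives uniqueness of the root.
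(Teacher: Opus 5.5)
Your proposal is correct and follows essentially the paper's route: express everything through the projection parameter $t$ via Lemma~\ref{lem: fonction_pair_C1alpha}, differentiate through the inverse of $t\mapsto d(t)$ using $t'(d)\sim\frac{1+\alpha}{1-\alpha}t^{\alpha}$ and $t(d)\sim((1+\alpha)d)^{1/(1-\alpha)}$, and collect powers (differentiating $1-\chi^2$ instead of $\chi$ is cosmetic, and your exponent count $3+\frac{3\alpha-1}{1-\alpha}=\frac{2}{1-\alpha}$ is right). Two small caveats: as the paper does, restrict to $0<\alpha<1$, since for $\alpha=0$ one has $d=\sqrt2\,t$ and $\chi\equiv 1/\sqrt2$, so your equivalents for $d$ and $d'$ fail and the statement degenerates; and your convexity-plus-symmetry fallback would not by itself rule out two projections on the same branch (that is the content of Lemma~\ref{lem: medial_axis_moitie_de_courbe}), so do rely on the paper's medial-axis lemmas as you planned.
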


Note that the derivative of the critical function at zero has a very particular behavior reminiscent of the Hausdorff dimension for fractal objects. Indeed, we see from Corollary~\ref{coro: equivalent_derive_fonction_crit_cas_pair} that the derivative of the critical function $\chi:\R\to \R$ of the curve $\C_\alpha$ satisfies
\begin{align*}
    \chi'(0) = \left\{ \begin{array}{ll}
        -\infty &\text{if} ~~ \alpha < \sfrac{1}{3} \\
        - C &\text{if} ~~ \alpha = \sfrac{1}{3} \\
        0 &\text{else}
    \end{array} 
   \right.
\end{align*}
with $C = - \frac{1}{2}\big(\frac{4}{3}\big)^3$. One can also observe the same behavior in the numerical experiments shown in Figure~\ref{fig: graphe_fonction_critique_question_comparaison_2}.

\begin{figure}[!ht]
\centering
\includegraphics[width=0.9\linewidth]{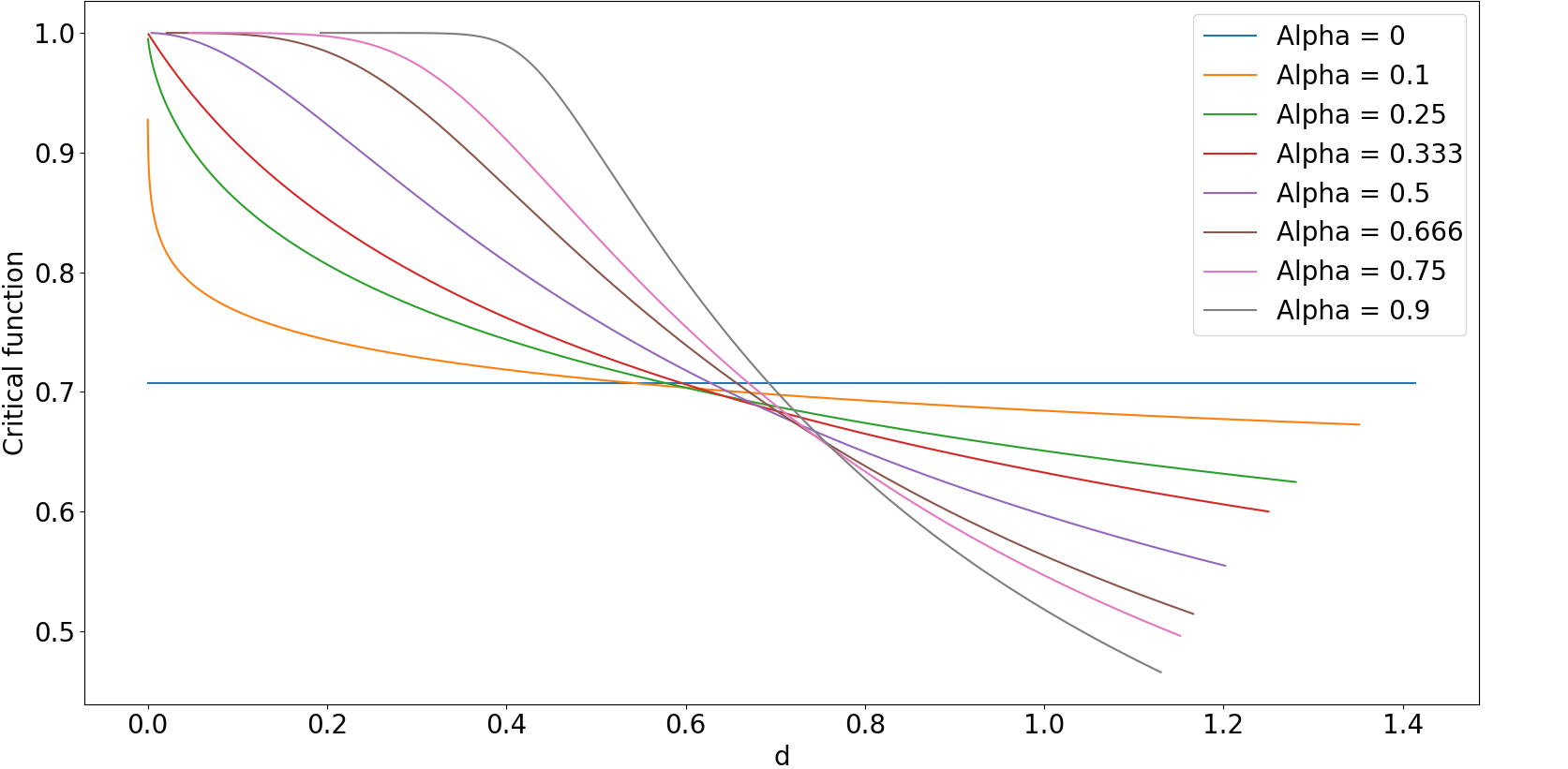}
\caption{\centering Computer simulation of the graphs of the critical functions $\chi$ of the curve $\C_\alpha$,  for different values of $\alpha$.}
\label{fig: graphe_fonction_critique_question_comparaison_2}
\end{figure}

The end of this section is devoted to the proof of Proposition~\ref{prop: equivalent_fct_critique_cas_pair} and of Corollary~\ref{coro: equivalent_derive_fonction_crit_cas_pair}.

\subsection{Preliminary results on the medial axis}

\begin{lemma} \label{lem: medial_axis_moitie_de_courbe}
Let $f : t \in I \to t^{1+\alpha}$ with $\alpha \in [0,1[$ and $I$ an open interval in $\R_+^*$.
Let $K = \mathrm{graph}(f)$ then $\mathscr{M}(K) = \emptyset$.
\end{lemma}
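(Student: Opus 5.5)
Since $f''(t)=(1+\alpha)\alpha\, t^{\alpha-1}>0$ on $I\subset\R_+^*$, the graph $K$ is strictly convex. The plan is to show that no point off $K$ can have two projections, because two normal lines cannot meet at equal distance from two points of $K$ on the side where a tie could occur.

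Let $p\notin K$ and suppose $p$ has two distinct projections $q_1=(t_1,f(t_1))$, $q_2=(t_2,f(t_2))$ with $t_1<t_2$ in $I$. Since $I$ is open, both are interior points of the $C^1$ curve $K$. By Lemma~\ref{lem_normale_distnace_min_variete}, $p$ lies on both normal lines. Write the normal line at $t$ as
\[
\ell_t=\{(t,f(t))+s(-f'(t),1)\,:\,s\in\R\}.
\]
Then $p=q_i+s_i(-f'(t_i),1)$ with $s_i\neq 0$.

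First consider the convex side, $s_i>0$. Since $f'$ is strictly increasing, the normals $\ell_{t_1}$ and $\ell_{t_2}$ meet above the curve. I would argue that the open disk centered at $p$ of radius $\|p-q_1\|$ must contain points of $K$ near whichever of $q_1,q_2$ is not the nearest, which is a contradiction. The cleanest route is to study $\phi(t)=\|p-(t,f(t))\|^2$ on $[t_1,t_2]$. This function is $C^1$, has critical points at $t_1$ and $t_2$, and takes equal minimal values there. Hence it has a strict interior maximum, and therefore an interior critical point $t_3\in(t_1,t_2)$, so $p\in\ell_{t_3}$ as well.

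Computing, $\tfrac12\phi'(t)=(t-a)+(f(t)-b)f'(t)$ for $p=(a,b)$. Critical points correspond to intersections of normals through $p$, so a contradiction follows if $\phi'$ has at most two zeros in $I$. I would try to show that the zeros of $g(t)=(t-a)+(f(t)-b)f'(t)$ are at most two by examining $g'(t)=1+f'(t)^2+(f(t)-b)f''(t)$. Since $f''>0$ and $f'(t)^2+1$ and $f$ are increasing, I would aim to show that $g'$ changes sign at most once.

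This step is the main obstacle, and an explicit power-function computation may be needed: $g'(t)=0$ is equivalent to
\[
b=f(t)+\frac{1+f'(t)^2}{f''(t)}.
\]
Here $f(t)+\frac{1+f'(t)^2}{f''(t)}$ is the height of the center of curvature (evolute). So it suffices to show that
\[
E(t)=t^{1+\alpha}+\frac{1+(1+\alpha)^2t^{2\alpha}}{\alpha(1+\alpha)t^{\alpha-1}}
\]
is strictly monotone on $\R_+^*$. Each term is $t^{\gamma}$ with a positive coefficient and positive exponent: the exponents are $1+\alpha$, $1-\alpha$ and $1+\alpha$. So $E$ is strictly increasing.

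Consequently $g'$ has at most one zero, hence $g$ has at most two zeros, contradicting the three zeros $t_1<t_3<t_2$. The concave side, $s_i<0$, is ruled out more directly, because then $g'>0$ wherever $f(t)<b$. In any case the counting argument above covers all $b$, so no case split is really required. Therefore $\mathscr{M}(K)=\emptyset$.
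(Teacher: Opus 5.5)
Your proof is correct and takes a genuinely different route from the paper. The paper writes the equidistance condition and the two normality conditions as a homogeneous $3\times 3$ linear system satisfied by $(1,u,v)^T$. It then expands the determinant explicitly for $f(t)=t^{1+\alpha}$, factors it, and rules out its vanishing for $t_2>t_1>0$ using the two calculus inequalities of Lemma~\ref{lem: cas_pair_inegalite_intermediare}.

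You instead work with the squared distance $\phi$ along the curve. Two global minima at $t_1<t_2$ force an interior critical point, so $g=\phi'/2$ has three zeros and $g'$ has two. On the other hand $g'(t)=f''(t)\,(E(t)-b)$, where $E(t)=\frac{1+2\alpha}{\alpha}t^{1+\alpha}+\frac{t^{1-\alpha}}{\alpha(1+\alpha)}$ is strictly increasing, so $g'$ has at most one zero.

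What your route buys:
\begin{itemize}
\item It replaces a long symbolic expansion, which is hard to check by hand, with a one-line monotonicity check.
\item It treats both sides of the curve at once.
\item It proves a general criterion: a strictly convex $C^2$ graph over an interval whose centre-of-curvature height $f+(1+f'^2)/f''$ is strictly monotone has empty medial axis.
\item It shows where $\alpha\leq 1$ matters: for $\alpha>1$ the term $t^{1-\alpha}$ destroys the monotonicity of $E$ near $0$.
\end{itemize}
The paper's approach is more mechanical and does not need the intermediate critical point, but it is tied to the specific power function.

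Three small points need tidying.
\begin{itemize}
\item For $\alpha=0$ your claim $f''>0$ is false and $E$ is undefined. In that case $g'\equiv 2>0$, so $g$ has at most one zero and the case is immediate. Note that the paper's determinant argument also degenerates at $\alpha=0$: two columns of its matrix coincide, and the inequalities of Lemma~\ref{lem: cas_pair_inegalite_intermediare} become equalities.
\item The ``strict interior maximum'' requires $\phi$ to be non-constant on $[t_1,t_2]$. If it were constant, $g$ would vanish on an interval, which contradicts the zero count anyway. More simply, $g'$ changes sign at most once, from negative to positive, so $g$ is strictly decreasing then strictly increasing. Then $g(t_1)=g(t_2)=0$ forces $g<0$ on $(t_1,t_2)$, hence $\phi(t_2)<\phi(t_1)$, a contradiction with no need for $t_3$.
\item In your aside on the concave side the inequality is reversed: $g'>0$ where $f(t)>b$, not where $f(t)<b$. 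This is harmless since you do not use it.
\end{itemize}
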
 

\begin{proof}
We suppose that the result does not hold, \textit{i.e.} that $\mathscr{M}(K) \neq \emptyset$. Therefore there exists $p = (u,v) \in \mathscr{M}(K)$ and $q_1 = (t_1, f(t_1))$, $q_2 = (t_2, f(t_2))$ are two projections of $p$ onto $K$, with $t_1\neq t_2$. We can  assume that $t_2 > t_1 > 0$.
We have that $\d_K(q_1) = \d_K(q_2)$ thus it verifies
\begin{equation} \label{eq: 1er_equation_systeme_lin}
    (t_1-u)^2 + (f(t_1)-v)^2 = (t_2-u)^2 + (f(t_2)-v)^2.
\end{equation}
Expanding the equation, we get a linear equation in $u$ and $v$
$$2 \big( t_1 - t_2 \big) \, u + 2 \big( f(t_1) -f(t_2) \big) \, v + \Big[ t_2^2 - t_1^2 + (f^2(t_2) - f^2(t_1)) \Big] = 0 .$$
According to Lemma \ref{lem_normale_distnace_min_variete}, since $p-q_i$ is orthogonal to $(1,f'(t_i))$, we have
\begin{equation} \label{eq: 2eme_equation_systeme_lin}
    (t_1 - u) + f'(t_1) (f(t_1)- v) = 0 ~~ \text{and} ~~ (t_2 - u) + f'(t_2) (f(t_2)- v) = 0.
\end{equation}
Thus $(1\ u\ v)^T$ is a solution of the following linear system of 3 equations:
\[ \begin{pmatrix}
    (t_2^2-t_1^2)+(f^2(t_2)-f^2(t_1)) & 2(t_1-t_2) & 2(f(t_1)-f(t_2))\\
 -t_1-f'(t_1)f(t_1) & 1 &  f'(t_1)\\
 -t_2-f'(t_2)f(t_2) & 1 &  f'(t_2)
\end{pmatrix}
\begin{pmatrix}
    1 \\ u \\ v
\end{pmatrix}
= 
\begin{pmatrix}
    0 \\ 0 \\ 0
\end{pmatrix}
\]
In order for this system to have a solution, the determinant of the matrix must vanish. Expanding the expression of that determinant with respect to the first row, we obtain
\begin{multline*}
    - t_1^{2+3\alpha} . (1 + \alpha) + t_1^{2+2 \alpha} t_2^\alpha . (1 + \alpha) (1+2 \alpha) - t_1^{2+\alpha} . (1 - \alpha) + t_1^2 t_2^\alpha . (1 + \alpha) - t_1^{1 + 2 \alpha} t_2^{1+\alpha} .(1 + \alpha) 2 \alpha\\
    - t_1^{1+\alpha} . \big( t_2^{1+2\alpha} . (1 + \alpha) 2 \alpha  + t_2 . 2 \alpha\big) 
    - t_1 t_2^{1+\alpha} . 2 \alpha + t_1^\alpha . \big( (1 + \alpha)  t_2^2 + (1 + \alpha) (1+2 \alpha) t_2^{2+2\alpha} \big) \\
     - \big( (1 - \alpha)  t_2^{2+\alpha} + (1 + \alpha) t_2^{2+3\alpha} \big) = 0.
\end{multline*}
By grouping terms of the same power and factor, we get
\begin{multline*}
    (1+ \alpha) \Big[  t_1^{2+2\alpha}t_2^{\alpha} + t_1^{\alpha} t_2^{2+2\alpha} - t_1^{2+3\alpha} - t_2^{2+3\alpha} \Big] + (1+ \alpha) 2 \alpha \Big[  t_1^{2+2\alpha}t_2^{\alpha} + t_1^{\alpha} t_2^{2+2\alpha} - t_1^{1 + 2 \alpha} t_2^{1+\alpha} -  t_1^{1+\alpha} t_2^{1+2\alpha} \Big] \\
    + (1 + \alpha) \Big[  t_1^{2}t_2^{\alpha} + t_1^{\alpha} t_2^{2} - t_1^{2+\alpha} - t_2^{2+\alpha} \Big]
    + 2 \alpha \Big[  t_1^{1+\alpha} t_2 + t_1 t_2^{1+\alpha} -t_1^{2+\alpha} - t_2^{2+\alpha} \Big] = 0
\end{multline*}
Factoring, this gives us
\begin{align*}
    &(1+\alpha) \left[\big(  t_2^{2+2\alpha} - t_1^{2+2\alpha} \big) \big( t_1^{\alpha} - t_2^{\alpha} \big) 
    -  2 \alpha \big( t_1^\alpha t_2^{1+2\alpha} - t_1^{1+2\alpha} t_2^{\alpha}  \big) \big( t_1 - t_2 \big)\right] \\
    & + (1+\alpha) \big( t_2^{2} - t_1^{2} \big) \big( t_1^{\alpha} - t_2^{\alpha} \big)
    - 2 \alpha \big( t_2^{1+\alpha} - t_1^{1+\alpha} \big) \big( t_1 - t_2 \big)= 0
\end{align*}
To complete the proof, it suffices to check that, for all $t_2 > t_1$, the following two inequalities:
\begin{align*}
    \big(  t_2^{2+2\alpha} - t_1^{2+2\alpha} \big) \big( t_2^{\alpha} - t_1^{\alpha} \big) &> 2 \alpha \big( t_2^{1+2\alpha} t_1^{\alpha} - t_2^\alpha t_1^{1+2\alpha} \big) \big( t_2 - t_1 \big) \\
    (1+\alpha) \big( t_2^{2} - t_1^{2} \big) \big( t_2^{\alpha} - t_1^{\alpha} \big) &> 2 \alpha \big( t_2^{1+\alpha} - t_1^{1+\alpha} \big) \big( t_2 - t_1 \big)
\end{align*}
which are equivalent to
\begin{align*}
	t_2^{1+2 \alpha} - t_1^{1+2\alpha} > \big( 1+2 \alpha \big) \Big( t_2^{1+\alpha} t_1^\alpha - t_2^\alpha t_1^{1+\alpha} \Big) \\
    (1+\alpha) \big( t_2 + t_1 \big) \big( t_2^{\alpha} - t_1^{\alpha} \big) > 2 \alpha \, \big( t_2^{1+\alpha} - t_1^{1+\alpha} \big).
\end{align*}
These inequalities are proved in Lemma \ref{lem: cas_pair_inegalite_intermediare}.
\end{proof}

\begin{lemma} \label{lem: cas_pair_inegalite_intermediare}
Let $\alpha \in [0,1[$ and $t_2 > t_1 \geq 0$ then 
\begin{align*}
	 t_2^{1+2 \alpha} - t_1^{1+2\alpha} > \big( 1+2 \alpha \big) \Big( t_2^{1+\alpha} t_1^\alpha - t_2^\alpha t_1^{1+\alpha} \Big) \\
     (1+\alpha) \big( t_2 + t_1 \big) \big( t_2^{\alpha} - t_1^{\alpha} \big) > 2 \alpha \, \big( t_2^{1+\alpha} - t_1^{1+\alpha} \big)
\end{align*}
\end{lemma}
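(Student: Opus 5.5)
Both inequalities are homogeneous in $(t_1,t_2)$, so I will reduce each to a one-variable inequality. Since $t_2>0$, I set $s:=t_1/t_2\in[0,1[$ and divide the first inequality by $t_2^{1+2\alpha}$ and the second by $t_2^{1+\alpha}$. They become
\[
g(s):=1-s^{1+2\alpha}-(1+2\alpha)\big(s^{\alpha}-s^{1+\alpha}\big)>0,
\qquad
F(s):=(1+\alpha)(1+s)(1-s^{\alpha})-2\alpha\big(1-s^{1+\alpha}\big)>0 .
\]
In both cases the function vanishes at $s=1$, so the strategy is to show that it is strictly decreasing on $]0,1[$ and to treat the endpoint $s=0$ by direct evaluation.

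For $g$, I differentiate and factor:
\[
g'(s)=(1+2\alpha)\,s^{\alpha-1}\,k(s),\qquad k(s):=(1+\alpha)s-s^{1+\alpha}-\alpha .
\]
Then $k(1)=0$ and $k'(s)=(1+\alpha)(1-s^{\alpha})>0$ on $]0,1[$, so $k<0$ there. Hence $g'<0$ on $]0,1[$, and $g(s)>g(1)=0$ for $s\in]0,1[$. At $s=0$ we have $g(0)=1>0$ directly, provided $\alpha>0$ so that $0^\alpha=0$.

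For $F$, differentiating gives
\[
F'(s)=(1+\alpha)\,m(s),\qquad m(s):=1+(\alpha-1)s^{\alpha}-\alpha s^{\alpha-1}.
\]
Here $m(1)=0$ and
\[
m'(s)=\alpha(\alpha-1)\,s^{\alpha-2}(s-1),
\]
which is positive on $]0,1[$ because $\alpha-1<0$ and $s-1<0$. So $m<0$ on $]0,1[$, $F$ is strictly decreasing, and $F(s)>F(1)=0$. At $s=0$ one checks directly that $F(0)=(1+\alpha)-2\alpha=1-\alpha>0$.

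The main obstacle is simply finding these factorizations of the derivatives; the rest is elementary. There is one caveat I would record: at $\alpha=0$ both inequalities degenerate to equalities (for $s>0$, $1-s^{0}=0$ and $g\equiv0$), so strictness genuinely requires $\alpha\in\,]0,1[$. This is the range in which the lemma is applied to $\C_\alpha$.
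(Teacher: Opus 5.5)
Your proof is correct and follows essentially the paper's route: the paper fixes $t_1$ and studies $x\mapsto x^{1+2\alpha}-t_1^{1+2\alpha}-(1+2\alpha)(x^{1+\alpha}t_1^\alpha-x^\alpha t_1^{1+\alpha})$ on $[t_1,\infty[$. It multiplies the derivative by $x^{1-\alpha}$ to get an auxiliary function vanishing at $x=t_1$ with derivative $(1+\alpha)(1+2\alpha)(x^\alpha-t_1^\alpha)$. That is your normalization $s=t_1/t_2$ in different clothing, and you also write out the second inequality, which the paper only calls ``similar''.

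Your caveat is right as well: at $\alpha=0$ both inequalities become equalities, so strictness needs $\alpha\in\,]0,1[$. The paper's own proof uses $\alpha>0$ at the same point.
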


\begin{proof}
For the first inequality, let $x \in [t_1, \infty [$ and
\[ f(x) := x^{1+2\alpha} - t_1^{1+2\alpha} - ( 1+2 \alpha ) ( x^{1+\alpha} t_1^\alpha - x^\alpha t_1^{1+\alpha} ).\]
Differentiating, we get
\[f'(x) = (1+2 \alpha) \, x^{2\alpha} - (1+\alpha) ( 1+2 \alpha ) \, x^{\alpha} t_1^\alpha + \alpha (1+2 \alpha) \, \frac{t_1^{1+\alpha}}{x^{1-\alpha}} . \]
We define
\[ g(x) := x^{1-\alpha} f'(x) = (1+2 \alpha)\, x^{1+ \alpha} - (1+\alpha) ( 1+2 \alpha ) \, x t_1^\alpha + \alpha (1+2 \alpha) \, t_1^{1+\alpha}  \]
and by differentiating
\[g'(x) = (1+\alpha) (1+2\alpha) \big( x^\alpha - t_1^\alpha \big).\]
For all $x > t_1$, we have $g'(x) > 0$.
Thus $g'$ is a strictly increasing function and since $g(t_1) = 0$, we have that $g$ is strictly positive on $ ]t_1 , + \infty[$.
The same goes for $f'$ and thus $f$ is strictly increasing on $ ]t_1 , + \infty[$. 
Since $f(t_1) = 0$, $f(x) > 0$ for all $x > t_1$. 
Therefore the first inequality is true.
The other inequality is shown similarly.
\end{proof}

\noindent Using Lemma \ref{lem: medial_axis_moitie_de_courbe}, we can fully determine the medial axis.

\begin{lemma} \label{lem: medial_axis_exemple_fonction_pair}
For every $\alpha\in [0,1[$, the medial axis of the curve $\C_\alpha$ is given by
$$\mathscr{M}(\C_\alpha) = \{ 0 \} \times \R_+^*.$$   
Furthermore, every point $p\in\mathscr{M}(\C_\alpha)$ has exactly two projections $q_1\neq q_2$ that are symmetric with respect to the axis $x=0$.
\end{lemma}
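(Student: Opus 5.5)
We prove the two inclusions, exploiting the symmetry $(x,y)\mapsto(-x,y)$ of $\C_\alpha$ and Lemma~\ref{lem: medial_axis_moitie_de_courbe}. Write $\C_\alpha=K_-\cup\{0\}\cup K_+$ with $K_\pm=\{(t,|t|^{1+\alpha}),\ \pm t>0\}$.

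\textbf{Step 1: $\{0\}\times\R_+^*\subset\mathscr{M}(\C_\alpha)$, with a symmetric pair of projections.} Take $p=(0,v)$ with $v>0$ and set $\phi(t)=t^2+(|t|^{1+\alpha}-v)^2$, which is the squared distance from $p$ to the point of parameter $t$. Near $t=0$ we have $\phi(t)=v^2+t^2-2v|t|^{1+\alpha}+|t|^{2+2\alpha}$. Since $1+\alpha<2$, the term $-2v|t|^{1+\alpha}$ dominates $t^2$ for small $|t|$, so $\phi(t)<\phi(0)$. Hence $0$ is not a minimizer. Because $\phi$ is even and coercive, its minimizers come in pairs $\pm t^*$ with $t^*\neq 0$, so $p$ has at least two projections and $p\in\mathscr{M}(\C_\alpha)$.

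\textbf{Step 2: exactly two projections.} Suppose $p$ has two distinct projections on the same side, say both in $K_+$. Restrict to an open interval $I\subset\R_+^*$ containing both parameters. Any point of $\mathrm{graph}(f|_I)$ is at distance at least $\d_{\C_\alpha}(p)$ from $p$. Therefore both points are also projections of $p$ onto $\mathrm{graph}(f|_I)$, so $p\in\mathscr{M}(\mathrm{graph}(f|_I))$. This contradicts Lemma~\ref{lem: medial_axis_moitie_de_courbe}. So each of $K_+$ and $K_-$ contains at most one projection. The origin is excluded by Step 1 when $u=0$, and in general by the argument of Step 3. For $p=(0,v)$ the projections are therefore exactly the symmetric pair $(\pm t^*,|t^*|^{1+\alpha})$.

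\textbf{Step 3: no other medial axis points.} Let $p=(u,v)\in\mathscr{M}(\C_\alpha)$ and suppose $u>0$; the case $u<0$ follows by symmetry. By Step 2, $p$ cannot have two projections inside $K_+$ or two inside $K_-$. It therefore has projections in two different pieces among $K_-$, $\{0\}$ and $K_+$.

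To exclude $K_-$, reflect: for $t>0$ and $u>0$,
\[\|p-(-t,t^{1+\alpha})\|^2-\|p-(t,t^{1+\alpha})\|^2=4ut>0.\]
So the reflected point $(t,t^{1+\alpha})$ is strictly closer to $p$, and no point of $K_-$ is a projection. Consequently $p$ must have the origin and some $q\in K_+$ as projections.

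Now exclude the origin as a projection when $v>0$. Since $t^{1+\alpha}=o(t)$, we get $\|p-(t,t^{1+\alpha})\|^2=\|p\|^2-2ut+o(t)<\|p\|^2$ for small $t>0$. For $v\le 0$, the half-plane $\{y\le0\}$ meets $\C_\alpha$ only at the origin, and points of $\C_\alpha$ lie above the $x$-axis. A direct comparison shows the origin is then the unique nearest point: the point $(t,t^{1+\alpha})$ is farther than $0$ whenever $u\le 0$ or $v\le 0$ in the relevant configurations. The main obstacle is to organize this case check cleanly; the derivative computation $\phi'(0^+)$ handles every sign case. The case $u=0$, $v\le0$ gives the unique projection $0$. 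This yields $\mathscr{M}(\C_\alpha)=\{0\}\times\R_+^*$.
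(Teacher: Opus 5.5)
\textbf{An incorrect step in Step~3.} Your overall strategy is sound, but the sub-case $v\le 0$ of Step~3 rests on a false claim. For $p=(u,v)$ with $u>0$ and $v\le 0$, the origin is \emph{not} the nearest point. For $p=(1,0)$ one has $\|p-(t,t^{1+\alpha})\|^2=1-2t+t^2+t^{2+2\alpha}<1=\|p\|^2$ for small $t>0$. For $\alpha>0$ the same happens for every $u>0$, whatever the sign of $v$. So your assertion that $(t,t^{1+\alpha})$ is farther than $0$ when $v\le 0$ is wrong, and as written this case is not handled.

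The repair is already in your text. The expansion
\[
\|p-(t,t^{1+\alpha})\|^2=\|p\|^2-2ut-2vt^{1+\alpha}+t^2+t^{2+2\alpha}=\|p\|^2-2ut+o(t)
\]
does not use the sign of $v$ when $\alpha>0$. So it shows directly that the origin is never a projection of a point with $u\neq 0$, and you can drop the case split on $v$. This is the paper's remark that the normal line to $\C_\alpha$ at the origin is vertical, which forces $u=0$.

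Note also that $t^{1+\alpha}=o(t)$ fails at $\alpha=0$, which the statement includes. There the origin is a corner and really is the unique projection of every $p$ with $v\le -|u|$. For $\alpha=0$, use instead $\|p-(t,t)\|^2-\|p\|^2=-2(u+v)t+2t^2$. If $u+v>0$, the origin is not a projection; if $u+v\le 0$, no point of $K_+$ is one. So the origin and a point of $K_+$ are never both projections. (The paper's vertical-normal argument has the same blind spot at $\alpha=0$.)

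\textbf{Comparison with the paper.} Once this is fixed, your proof differs from the paper's in two places.

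The paper takes projections with $t_1>0\ge t_2$. It rules out $t_2=0$ by first forcing $u=0$, then solving the equidistance and normality equations to reach the impossible $t_1^{2\alpha}=1-\frac{2}{1+\alpha}$. It gets ``exactly two'' in the converse direction, from the fact that the critical points of $g$ on $]0,+\infty[$ satisfy $v=h(t)$ with $h$ a bijection.

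You instead note that $|t|^{1+\alpha}$ dominates $t^2$ near $0$, so the origin is never a projection of $(0,v)$ with $v>0$; this makes the algebraic computation unnecessary. You also obtain uniqueness on each half from Lemma~\ref{lem: medial_axis_moitie_de_courbe} rather than from the monotonicity of $h$.

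Your route is shorter and more conceptual. The paper's route produces the explicit relation $t+f'(t)(f(t)-v)=0$ between $v$ and the projection parameter, which it reuses in Lemma~\ref{lem: fonction_pair_C1alpha} to parametrize the critical function. The reflection argument excluding $u\neq 0$ is the same in both.
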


\begin{proof}
Let $p = (u,v) \in \mathscr{M}(\C_\alpha)$ and $q_1 = (t_1 ,f(t_1))$, $q_2 = (t_2, f(t_2))$ be two projections of $p$ on $K$. By symmetry of the curve and by  Lemma~\ref{lem: medial_axis_moitie_de_courbe}, we can assume that $t_1 > 0$ and $t_2 \leq 0$.

We are first going to show that $t_2<0$. We suppose that $t_2 = 0$, which implies that the normal to $q_2 = (0,0)$ is vertical. Therefore $p$ must be in $\{0\} \times \R_+^*$, thus $u=0$. By Equations~\eqref{eq: 2eme_equation_systeme_lin} and~\eqref{eq: 1er_equation_systeme_lin}, we have 
\begin{align*}
\left\{ \begin{array}{l}
    \displaystyle t_1^2 + (f(t_1)-v)^2 =  v^2  \\
    \displaystyle t_1  + f'(t_1) (f(t_1)- v) = 0\\
\end{array} 
\right.
\end{align*}
Since  $f(t)=t^{1+\alpha}$ on $\R^*_+$ and $t_1>0$, we get
\begin{align*}
\left\{ \begin{array}{l}
    \displaystyle t_1^2 + t_1^{2(1+\alpha)}-2v \, t_1^{1+\alpha} =  0 \\
    \displaystyle t_1  + (1+ \alpha)\, t_1^{1+2\alpha} - (1+ \alpha) v \, t_1^{\alpha} = 0\\
\end{array} 
\right. ,
\end{align*}
that implies after computation
\[ 0 < t_1^{2 \alpha} = 1 - \frac{2}{1 + \alpha} \leq 0  \]
which is absurd.
Therefore, we have $t_2 < 0$. 

\noindent Now, we want to prove that $p=(u,v) \in  \{0\} \times \R_+^*$. Let us first suppose that $u \neq 0$. By symmetry of $f$, we can assume $u>0$.
Thus, we have
\begin{multline*}
d_{\C_\alpha}(p) = \| p - q_2 \| = \sqrt{(t_2-u)^2 + (f(t_2)-v)^2} \\
> \sqrt{(-t_2-u)^2 + (f(t_2)-v)^2} = \| (-t_2, f(-t_2)) -  p \| \geq d_{\C_\alpha}(p),
\end{multline*}
which is a contradiction. 
According to Equation \eqref{eq: 2eme_equation_systeme_lin}, we have
\[ t_1 + f'(t_1) \big( f(t_1) - v) = 0\]
thus $v>f(t_1)>0$ since $t_1>0$.
We have shown that $(u,v) \in  \{0\} \times \R_+^*$. \\

\noindent For the converse, let $p = (0,v) \in \{ 0 \} \times \R_+^*$. The set $\Gamma_{\C_\alpha}(p)$ of projections of $p$ is given by the minimizers of  the following function
$$ g : t \in \R \longmapsto \|p - (t,f(t))\|^2  = t^2 + \big( t^{1+\alpha} - v \big)^2.$$
We find out that the minimum of $g$ is reached for  $t_{min} > 0$ and $-t_{min}$. More precisely, we have $t_{min} = h^{-1}(v)$ where $$
h : t \in \R_+ \longmapsto \frac{t^{1-\alpha}}{1+\alpha} + t^{1+\alpha} \in \R_+
$$
is a one-to-one map.
This implies that $\Gamma_{\C_\alpha}(p)$ contains the two points $q_1 = (t_{\min},f(t_{\min}))$ and $q_2 = (-t_{\min},f(-t_{\min}))$. Hence $p\in \mathscr{M}(\C_\alpha)$.
\end{proof}



\subsection{Proof of Proposition \ref{prop: equivalent_fct_critique_cas_pair}}

\begin{lemma} \label{lem: fonction_pair_C1alpha}
Let $\alpha\in [0,1[$. The graph of the critical function $\chi:\R_+ \to \R$ of $\C_\alpha$ is parametrized by
\[
t\in \R^+ \mapsto  
\left( \begin{array}{l}
\tilde{d}(t)\\
\chi(\tilde{d}(t))\\
\end{array} 
\right)
   = 
   \left( \begin{array}{l}
        t \bigg( 1 + \frac{1}{f'(t)^2} \bigg)^{\frac{1}{2}}\\
       \bigg( f'(t)^2 + 1 \bigg)^{-\frac{1}{2}}\\
    \end{array} 
   \right),
\]
with $f : t \in \R \to |t|^{1+\alpha}$.
\end{lemma}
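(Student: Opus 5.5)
The plan is to use Lemma~\ref{lem: medial_axis_exemple_fonction_pair} to reduce the critical function to a one-parameter computation along the vertical half-line $\{0\}\times\R_+^*$.

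\textbf{Reduction to the medial axis.} Off the medial axis, Proposition~\ref{prop: distance_differentiable} gives $\|\nabla \d_{\C_\alpha}\|=1$. On the medial axis, take $p=(0,v)$ with $v>0$. By Lemma~\ref{lem: medial_axis_exemple_fonction_pair}, $p$ has exactly the two projections $q_1=(t,f(t))$ and $q_2=(-t,f(t))$, where $t=h^{-1}(v)>0$. Hence
\[
\chi(d)=\min\Big(1,\ \inf\{\|\nabla\d(p)\| : p\in\mathscr{M}(\C_\alpha),\ \d(p)=d\}\Big).
\]

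\textbf{Computation at a point $(0,v)$.} The smallest ball containing $\{q_1,q_2\}$ is centred at the midpoint $(0,f(t))$ and has radius $\mathscr{R}=t$. Proposition~\ref{prop: expression_grad_g_rayon} then gives
\[
\|\nabla \d(p)\|^2=1-\frac{t^2}{\d(p)^2}.
\]

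\textbf{Expressing the distance in terms of $t$.} By Lemma~\ref{lem_normale_distnace_min_variete}, the vector $p-q_1=(-t,\,v-f(t))$ is orthogonal to $(1,f'(t))$. This gives $v-f(t)=t/f'(t)$, so
\[
\d(p)=\sqrt{t^2+t^2/f'(t)^2}=t\Big(1+\frac{1}{f'(t)^2}\Big)^{1/2}=\tilde d(t).
\]
Substituting, $1-\|\nabla\d\|^2 = 1/(1+f'(t)^{-2}) = f'(t)^2/(1+f'(t)^2)$. Hence $\|\nabla\d(p)\|=(1+f'(t)^2)^{-1/2}$, which is exactly the announced second coordinate.

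\textbf{Monotonicity and conclusion.} Each level set $\{\d=d\}$ meets the medial axis in exactly one point. To see this, note that
\[
\tilde d(t)=\sqrt{t^2+t^{2-2\alpha}/(1+\alpha)^2}
\]
is a continuous, strictly increasing bijection $\R_+\to\R_+$. Likewise $v=h(t)$ is strictly increasing, so as $v$ runs over $\R_+^*$, $t$ runs over $\R_+^*$ and $\tilde d$ covers $\R_+^*$ exactly once. So for each $d$ there is a unique such $t$, and the infimum defining $\chi(d)$ equals $\min(1,(1+f'(t)^2)^{-1/2})=(1+f'(t)^2)^{-1/2}$. At $t=0$ both sides extend continuously to $(0,1)$, consistent with $\chi(0)=1$.

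The only delicate point is this bijectivity and monotonicity check, which ensures that $d\mapsto t$ is well defined. It is elementary, since $\tilde d$ is a sum of increasing powers of $t$.
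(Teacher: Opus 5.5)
Your proof is correct and follows the paper's argument essentially step by step. You take the projections $(\pm t,f(t))$ from Lemma~\ref{lem: medial_axis_exemple_fonction_pair}, use the normality relation $v-f(t)=t/f'(t)$ to get $\d(p)=\tilde d(t)$ and $\|\nabla \d(p)\|^2=1/(1+f'(t)^2)$, and then use strict monotonicity of $\tilde d$ to identify this value with $\chi(d)$. Two harmless differences: you use the radius formula of Proposition~\ref{prop: expression_grad_g_rayon} where the paper uses $\|p-\Omega_{\C_\alpha}(p)\|/d$, and you are more explicit about points off the medial axis and about the bijectivity of $v\mapsto t$.

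One small slip in your closing aside about $t=0$: for $\alpha=0$ one has $f'\equiv 1$ on $]0,\infty[$. The second coordinate is then constantly $1/\sqrt{2}$ and does not tend to $1$, which is consistent with $\C_0$ not being $C^1$. So the continuous extension to $(0,1)$ holds only for $\alpha\in\,]0,1[$.
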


\begin{proof}
By Lemma \ref{lem: medial_axis_exemple_fonction_pair}, for every $p= (0,v) \in \mathscr{M}(\C_\alpha)$, we have $\Gamma_{\C_\alpha}(p) = \{q_1,q_2\}$, with $t>0$, $q_1=(t,f(t)$ and $q_2=(-t,f(-t))$. We then get 
$$\Omega_{\C_\alpha}(p) = \frac{q_1+q_2}{2} = \big( 0 , f(t) \big).$$
Setting  $d = \d_{\C_\alpha}(p)$, we get
\begin{align*}
    \| \nabla \d_{\C_\alpha} (p) \|^2 = \frac{\|p - \Omega_{\C_\alpha}(p)\|^2}{d^2} = \frac{ \big( v - f(t) \big)^2}{d^2} .
\end{align*}
Equations~\eqref{eq: 2eme_equation_systeme_lin} and~\eqref{eq: 1er_equation_systeme_lin} give
$$t + f'(t) \big( f(t) - v \big) = 0
\quad \mbox{and}\quad d^2 = t^2 + \big( v - f(t) \big)^2$$
which implies
$$d^2 = t^2 \bigg( 1 + \frac{1}{f'(t)^2} \bigg)$$
and thus 
\[ \| \nabla \d_{\C_\alpha} (p) \|^2 = \frac{ \Big( \frac{t}{f'(t)} \Big)^2 }{t^2 + \Big( \frac{t}{f'(t)} \Big)^2} = \frac{1}{1 + f'(t)^2}. \]
We remark that the function $\tilde{d}:t \mapsto t^2 \bigg( 1 + \frac{1}{f'(t)^2} \bigg)$ is strictly monotone and therefore bijective from $\R_+$ to $\R_+$.
Thus, for any $d > 0$, there is a unique associate value of $t$ and thus a unique point on the medial axis.
It follows that $$\| \nabla \d_{\C_\alpha}(p) \| = \chi(d).$$
\end{proof}

\begin{proof}[Proof of Proposition \ref{prop: equivalent_fct_critique_cas_pair}]
We see $d$ as a function of $t$ and denote it by $d=\tilde{d}(t)$.
By Lemma \ref{lem: fonction_pair_C1alpha} and from the expression of $f$, we deduce
$$\tilde{d}(t)^2  \underset{t \to 0}{\sim} \frac{1}{(1+\alpha)^2} \, t^{2(1-\alpha)}~~ \text{and} ~~  1 - \chi^2\big(\tilde{d}(t)\big) \underset{t \to 0}{\sim} (1+\alpha)^2 t^{2 \alpha}.$$
Elevating $\tilde{d}(t)$ to the power $\frac{2 \alpha}{1 - \alpha}$, we obtain according to the operating rules for equivalents
$$t^{2 \alpha}  \underset{t \to 0}{\sim} ( 1 + \alpha )^\frac{2 \alpha}{1-\alpha} \, \tilde{d}(t)^{ \frac{2 \alpha}{1-\alpha}}.$$ 
We then replace in the expression of $\chi^2$:
$$1 - \chi^2\big(\tilde{d}(t)\big) \underset{t \to 0}{\sim} (1+\alpha)^{ \frac{2}{1-\alpha}} \tilde{d}(t)^{ \frac{2 \alpha}{1 - \alpha}}.$$
Since the limit of $\tilde{d}$ is zero when $t$ tends to zero, we also have
$$1 - \chi^2(d) \underset{d \to 0}{\sim} (1+\alpha)^{ \frac{2}{1-\alpha}} d^{ \frac{2 \alpha}{1 - \alpha}}.$$
\end{proof}

\begin{proof}[Proof of corollary \ref{coro: equivalent_derive_fonction_crit_cas_pair}]
According to Lemma \ref{lem: fonction_pair_C1alpha}, we have
\begin{align*}
    \left\{ \begin{array}{l}
       \chi(d) = \bigg( (1+\alpha)^2 t^{2 \alpha} + 1 \bigg)^{-\frac{1}{2}}\\
        d^2 = t^2 \bigg( 1 + \frac{1}{(1+\alpha)^2 t^{2 \alpha}} \bigg).
    \end{array} 
   \right.
\end{align*}
We then fix $\alpha$ and consider $t$ as a function of $d$.
We also suppose that $0 < \alpha < 1$.
By differentiating the first equation, we get
$$\chi'(d) = \frac{ - \alpha (1+ \alpha)^2}{( t^{2 \alpha} (1+\alpha)^2 + 1)^{\frac{3}{2}}} . t'(d) . \big(t(d)\big)^{2\alpha - 1}.$$
By doing the same with the square root of the second line of the system, we get
$$1 = t'(d) \Bigg( \frac{1}{\big(t(d)\big)^{\alpha}} \sqrt{ \big(t(d)\big)^{2 \alpha} + \frac{1}{(1+\alpha)^2 }} - \frac{\alpha}{ (1+\alpha)^2} \frac{1}{\big(t(d)\big)^{\alpha} \sqrt{\big(t(d)\big)^{2 \alpha} + \frac{1}{ (1+\alpha)^2}}}\Bigg)$$
Since $t(0) = 0$, we have 
$$t'(d) \underset{d \to 0}{\sim}  \frac{1}{\Big( \frac{1}{t(d)} \Big)^\alpha \bigg( \sqrt{\frac{1}{(1+\alpha)^2}} - \frac{\alpha}{(1+\alpha)^2} \frac{1}{\sqrt{\frac{1}{(1+\alpha)^2}}} \bigg) } \underset{d \to 0}{\sim} 
\frac{1+\alpha}{1 - \alpha}  \big(t(d)\big)^{\alpha}.$$
Now, we would like to have a equivalent of $t$ as a function of $d$. 
By using the second equation, we obtain 
$$\frac{d^2}{\big(t(d)\big)^2} (1+\alpha)^2 \big(t(d)\big)^{2 \alpha} =  (1+\alpha)^2\big(t(d)\big)^{2 \alpha} + 1$$
and thus we obtain the following equivalent
$$d \underset{d \to 0}{\sim} \frac{1}{(1+ \alpha)} \big(t(d)\big)^{1 - \alpha} ~~ \text{et} ~~ t(d) \underset{d \to 0}{\sim} \big((1+\alpha) d \big)^{\frac{1}{1-\alpha}} $$
Finally, we can find an equivalent of $\chi'$ at zero
$$\chi'(d) \underset{d \to 0}{\sim} - \frac{ \alpha (1+ \alpha)^3 }{1-\alpha} . \big(t(d)\big)^{3 \alpha - 1} \underset{d \to 0}{\sim} - \frac{ \alpha (1 + \alpha)^{3 + \frac{3 \alpha + 1}{1-\alpha}}}{1-\alpha} . d^\frac{3 \alpha - 1}{1-\alpha} \underset{d \to 0}{\sim} - \frac{\alpha (1 + \alpha)^{\frac{2}{1-\alpha}}}{1-\alpha} . d^\frac{3 \alpha - 1}{1-\alpha}.$$

\end{proof}


\bibliographystyle{plain}
\bibliography{Biblio}


\end{document}